\renewcommand{\epsilon}{\varepsilon}
\renewcommand{\exp}[1]{e^{#1}}
\newcommand{\inv}[1]{#1^{-1}}
\renewcommand{\1}{{\bf 1}}
\newcommand\restr[2]{{
		\left.\kern-\nulldelimiterspace 
		#1 
		\vphantom{\big|} 
		\right|_{#2} 
}}
\newcommand{\C}{\ensuremath{\mathbb C}}
\newcommand{\R}{\mathbb{R}}
\newcommand{\N}{\mathbb{N}}
\newcommand{\F}{\mathscr F}
\newcommand{\E}{\mathbb E}
\renewcommand{\P}{\mathbb P}
\renewcommand{\O}{\mathcal O}
\renewcommand{\L}{\mathcal L}
\newcommand{\dWHs}{\,\mathrm{d}W(s)}
\newcommand{\la}{\lambda}
\newcommand{\eps}{\varepsilon}
\renewcommand{\tilde}{\widetilde}
\DeclareSymbolFont{matha}{OML}{txmi}{m}{it}
\DeclareMathSymbol{\varv}{\mathord}{matha}{118}
\theoremstyle{plain}
\newtheorem{theorem}{Theorem}[section]
\newtheorem{lemma}[theorem]{Lemma}
\newtheorem{proposition}[theorem]{Proposition}
\newtheorem{corollary}[theorem]{Corollary}
\theoremstyle{definition}
\newtheorem{defn}[theorem]{Definition} 
\newtheorem{definition}[theorem]{Definition} 
\newtheorem{assumption}[theorem]{Assumption}
\theoremstyle{remark}
\theoremstyle{remark}
\newtheorem{remark}[theorem]{Remark}
\numberwithin{equation}{section}
\newcommand{\calL}{\mathcal{L}}
\newcommand{\coloneq}{\coloneqq}
\newcommand{\wt}{\widetilde}
\newcommand{\SMR}{{\rm SMR}}
\newcommand{\DSMR}{{\rm DSMR}}
\title{Discrete stochastic maximal regularity}
\begin{document}

\author{Foivos Evangelopoulos-Ntemiris}
\address{Delft Institute of Applied Mathematics\\
Delft University of Technology \\ P.O. Box 5031\\ 2600 GA Delft\\The
Netherlands} \email{
F.A.Evangelopoulos-Ntemiris@tudelft.nl and 
foivosevangelopoulos@gmail.com}

\author{Mark Veraar}
\address{Delft Institute of Applied Mathematics\\
Delft University of Technology \\ P.O. Box 5031\\ 2600 GA Delft\\The
Netherlands} \email{M.C.Veraar@tudelft.nl}

\thanks{The authors have received funding from the VICI subsidy VI.C.212.027 of the Netherlands Organisation for Scientific Research (NWO)}

\begin{abstract}
In this paper, we investigate discrete regularity estimates for a broad class of temporal numerical schemes for parabolic stochastic evolution equations. We provide a characterization of discrete stochastic maximal $\ell^p$-regularity in terms of its continuous counterpart, thereby establishing a unified framework that yields numerous new discrete regularity results. Moreover, as a consequence of the continuous-time theory, we establish several important properties of discrete stochastic maximal regularity such as extrapolation in the exponent $p$ and with respect to a power weight. Furthermore, employing the $H^\infty$-functional calculus, we derive a powerful discrete maximal estimate in the trace space norm $D_A(1-\frac1p,p)$ for $p \in [2,\infty)$.
\end{abstract}

\keywords{Discrete stochastic maximal regularity, maximal estimates, stochastic evolution equations, time discretization schemes, rational approximation, $H^\infty$-calculus, $\mathcal{R}$-boundedness}

\subjclass[2010]{Primary: 46N40, 60H15, Secondary: 35B65, 42B37, 47D06, 60H35, 65J10, 65M12}

\maketitle
\setcounter{tocdepth}{1}
	\tableofcontents

\section{Introduction}

Maximal $L^p$-regularity techniques play a central role in the theory of both deterministic and stochastic evolution equations of parabolic type (see the monographs \cite{Analysis3, PrussSim} and the surveys \cite{agresti2025nonlinear, wilke2023linear}, as well as the references therein). Although maximal regularity is inherently a linear concept, it becomes a powerful tool for analyzing nonlinear problems through linearization techniques. In particular, it allows us to establish local well-posedness and regularity results, and formulate sharp blow-up criteria for global existence for a wide class of equations.

In the deterministic setting, a theory of discrete maximal $\ell^p$-regularity can already be found in \cite{AsSo} and was connected to discrete Fourier multiplier theory in  \cite{BlunckJFA, BlunckStudia}. The connections between discrete maximal regularity and numerical analysis—specifically, the stability and convergence of numerical schemes for (non)linear PDEs—have been explored in numerous works (see, for example, \cite{akrivis2022error, akrivis2017combining, akrivis2015fully, APW02, kemmochi_discrete_2016, kemmochi2018discrete, KovaczSIAM} and references therein), and this line of research remains highly active.

Motivated by these developments, we introduce a discrete analogue of stochastic maximal regularity. Our framework encompasses the exponential Euler method and a broad class of rational time discretization schemes, including the implicit Euler method. In addition to establishing new maximal regularity estimates, we derive discrete analogues of maximal estimates with optimal trace regularity. These results are expected to pave the way for improved convergence rates, as already observed in the deterministic setting. For instance, in the recent work \cite{Li-Zhou-Pathwise-uniform}, the authors used stochastic maximal regularity and non-optimal maximal estimates to prove pathwise uniform convergence estimates of a full discretization for the three-dimensional Allen-Cahn equation. The reader is also referred to \cite[Remark 3.3]{li-Xie-stability} for an elementary example demonstrating the use of discrete stochastic maximal regularity in proving stability estimates.

The seminal results of \cite{DoreVenni} and \cite{weis2001operator} provide necessary and sufficient conditions for maximal $L^p$-regularity in the deterministic continuous-time context. In the stochastic setting, sufficient conditions involving the $H^\infty$-calculus of an operator $A$ were established in \cite{NVWSMR, vanneerven2014stochasticintegrationbanachspaces}. For a more detailed historical account on (stochastic) maximal regularity, we refer the reader to the recent survey \cite{agresti2025nonlinear} and the monograph \cite{DPZ}.

Discrete versions of stochastic maximal regularity estimates are mainly known in a Hilbert space setting (see \cite{GyMil05, GyMil07, GyMil09, Kazashi}). An exception is \cite{li-Xie-stability}, where a discrete stochastic maximal regularity is proved for the implicit Euler scheme in $L^q$-spaces using the boundedness of the $H^\infty$-calculus. 

In our work, we establish a full theory of discrete stochastic maximal regularity, which in particular entails the following:
\begin{enumerate}[(a)]
    \item\label{it:new1} An equivalence of the continuous and discrete setting for a large class of numerical schemes;
    \item\label{it:new2} Discrete stochastic maximal regularity results in Hilbert spaces, spaces like $L^q$, and real interpolation spaces;
    \item\label{it:new3} A discrete  maximal estimate in the trace space $D_A(1-\frac1p,p)$;
    \item\label{it:new4} All results are presented in a time-weighted setting.
\end{enumerate}
Part \eqref{it:new2} is a direct consequence of our equivalence theorem in \eqref{it:new1} and the results of \cite{agresti2025nonlinear, LoVer, NVWSMR, vanneerven2014stochasticintegrationbanachspaces} by the second author and his collaborators. In particular, \eqref{it:new2} contains a far-reaching extension of the main result of \cite{li-Xie-stability}. 
The discrete maximal estimate mentioned in \eqref{it:new3} is one of the deepest results of the paper, and has never been considered in a stochastic setting before. It incorporates maximal estimates and parabolic smoothing. As is well-known in the theory of stochastic processes, it can be rather delicate to prove maximal estimates (see \cite{Talagrand}). Our proof relies on $H^\infty$-calculus, $\mathcal{R}$-boundedness techniques, fractional calculus, and deep results from interpolation theory.

\subsection{Formulation of our main results}
Below, we will formulate some of the main results of the paper. For simplicity, we only present the unweighted setting in this subsection.  

Let $X_0$ be (isomorphic to a closed subspace of) $L^q(\mathcal{O})$, for some $q\in [2, \infty)$, over a $\sigma$-finite measure space $(\mathcal{O}, \Sigma, \mu)$.
Suppose that $X_1$ is a Banach space such that  $X_1\hookrightarrow X_0$ densely and define the real and complex interpolation spaces
\[ X_{\alpha,p} \coloneq (X_0, X_1)_{\alpha,p} \ \ \text{and} \ \ X_{\alpha} \coloneq [X_{0}, X_1]_{\alpha} , \ \ \ \alpha\in (0,1), \ p\in (1,\infty).\]
Throughout this manuscript, we impose the following assumption:
\begin{assumption}\label{assum:mainDSMR} \
\begin{enumerate}[(1)]
\item\label{it:mainDSMR1} $A$ is a sectorial operator on $X_0$ of angle $\omega(A)<\frac \pi2$, and $D(A) = X_1$;
\item\label{it:mainDSMR2} $R\colon [0,\infty)\to \calL(X_0)$ is given by $R_{\tau} \coloneq r(\tau A)$, where $r$ is either the exponential function $r(z) \coloneq e^{-z}$, or $r\colon \Sigma_{\theta}\to \C$ is an $A(\theta)$-stable rational function (i.e.\ $|r(z)|\leq 1$ for $z\in \Sigma_{\theta}$) with  $\theta \in (\omega(A),\tfrac \pi2]$  that is consistent of order $\ell \ge 1$ (i.e.\  $|r(z) - \exp{-z}|\leq Cz^{\ell+1}$ for $z\to 0$) and satisfies $r(\infty) = 0$.
\end{enumerate}
\end{assumption}

Here, $ \Sigma_{\theta}\coloneq \{z\in \C\setminus\{0\}\colon |\arg z|<\theta\}$.
Given a time step $\tau>0$, let $t_n = n\tau$, $n \ge 0$, and consider the following discretization scheme: starting from $Y_0 \coloneq 0$, define recursively
\begin{equation} \label{Eq:Definition of approximation scheme-intro}
		Y_{n+1} \coloneq R_\tau Y_{n} + R_\tau \int_{t_n}^{t_{n+1}} g(s)  d W(s), \quad n\geq 0,
	\end{equation}	
where $W$ is a cylindrical Brownian motion in $\ell^2$ and $g\in L^p(\Omega;L^p (0,T;\gamma(\ell^2,X_{1/2})))$ is strongly progressively measurable. 

The scheme $R$ is said to have {\em discrete stochastic maximal $\ell^p$-regularity} if there exists a constant $C$, independent of $\tau$ and $g$, such that
\begin{equation}\label{DSMR definition introduction}
    \E \sum_{n\geq 0} \tau  \| A Y_n \|^p_{X_0}
\le C^p \E\|g\|_{L^p (0,T;\gamma(\ell^2,X_{1/2}))}^p.
\end{equation}
We emphasize that the formulation \eqref{Eq:Definition of approximation scheme-intro} is sufficiently general to encompass various approximation methods. In particular, one can take $g(t) = g_n$ for $t\in (t_n, t_{n+1}]$, where $g_n\in L^p(\Omega;\gamma(\ell^2,X_{1/2}))$ is $\F_{t_n}$-measurable. This leads to the more common scheme
\begin{equation}\label{Eq:Definition of approximation scheme-intro special case}
    Y_{n+1} \coloneq R_\tau Y_{n} + R_\tau g_n (W(t_{n+1}) - W(t_n)), \quad n\geq 0.
\end{equation}
In this case \eqref{DSMR definition introduction} becomes
$$
\E \sum_{n\geq 0} \tau  \| A Y_n \|^p_{X_0}
\le C^p \E \sum_{n\ge0} \tau \|g_n\|^p_{\gamma(\ell^2,X_{1/2}))}.
$$
    
In most cases, $X_{1/2}$ is a (subspace of a) (fractional) Sobolev space $H^{s,q}(D)$, and in this situation 
\[\gamma(\ell^2,X_{1/2}) = H^{s,q}(D;\ell^2).\]

Our first main result is that this property can be characterized in the following way.
\begin{theorem}[Characterization of Discrete Stochastic Maximal $\ell^p$-regularity]\label{thm:introequiv}
Let $q\in [2, \infty)$ and suppose that $X_0$ is isomorphic to a closed subspace of $L^q(\mathcal{O})$ with $(\mathcal{O}, \Sigma,\mu)$ a $\sigma$-finite measure space.
Suppose that Assumption \ref{assum:mainDSMR} holds.
Then for any $p\in [2, \infty)$ the following are equivalent:
\begin{enumerate}[(1)]
\item\label{it1:introequiv} $A$ has stochastic maximal $L^p$-regularity;
\item\label{it2:introequiv} $R$ has discrete stochastic maximal $\ell^p$-regularity.
\end{enumerate}
\end{theorem}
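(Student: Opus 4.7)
My plan is to reformulate both statements via the Itô isomorphism in UMD Banach spaces (applicable since $X_0$ embeds in $L^q$, $q\ge 2$). Unrolling the recursion \eqref{Eq:Definition of approximation scheme-intro} yields
\[
A Y_n = \sum_{k=0}^{n-1} A\, r(\tau A)^{n-k} \int_{t_k}^{t_{k+1}} g(s)\, dW(s),
\]
while the continuous stochastic convolution is $u(t) = \int_0^t e^{-(t-s)A} g(s)\, dW(s)$. The Itô isomorphism identifies the $L^p(\Omega;X_0)$-norm of $AY_n$ (resp.\ $Au(t)$) with the expectation of a $\gamma$-radonifying norm of the kernel $s \mapsto A\, r(\tau A)^{n-k(s)} g(s)$ (resp.\ $s \mapsto A e^{-(t-s)A} g(s)$). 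Summing over $n$ with weight $\tau$, or integrating in $t$, reduces \eqref{it1:introequiv} and \eqref{it2:introequiv} to uniform bounds on closely related $\gamma$-multipliers, so the real task is to transfer such bounds between the discrete and continuous kernels.

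For the direction \eqref{it2:introequiv}$\Rightarrow$\eqref{it1:introequiv} I would approximate a progressively measurable $g$ by the step processes $g_\tau(s):=g(t_k)$ on $(t_k,t_{k+1}]$ and run the scheme with input $g_\tau$. The DSMR bound is uniform in $\tau$, and the consistency $|r(z)-e^{-z}|\lesssim |z|^{\ell+1}$ combined with $A(\theta)$-stability forces $r(\tau A)^{\lfloor t/\tau\rfloor} \to e^{-tA}$ strongly (a quantitative Trotter--Kato step familiar from rational approximation theory). Passing $\tau\downarrow 0$ in the discrete estimate via Fatou's lemma and a density argument recovers continuous SMR for $A$.

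The substantive direction is \eqref{it1:introequiv}$\Rightarrow$\eqref{it2:introequiv}. Under SMR, $A$ has a bounded $H^\infty$-calculus on $X_0$ with angle $<\pi/2$ by \cite{NVWSMR}. Since $\omega(A)<\theta\le\pi/2$, the functions $z\mapsto r(\tau z)^n$ and $z\mapsto n\tau z\, r(\tau z)^n$ are uniformly $H^\infty$-bounded on $\Sigma_\theta$, combining $A(\theta)$-stability on $\Sigma_\theta$, the consistency at $z=0$, and the decay $r(\infty)=0$. The Kalton--Weis theorem then upgrades these to $\mathcal{R}$-bounded families of operators on $X_0$, uniformly in $\tau>0$ and $n\ge 1$. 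This serves as the discrete analogue of the parabolic smoothing $\|tAe^{-tA}\|\lesssim 1$ and allows a pointwise-in-$s$ comparison of the discrete kernel $A\,r(\tau A)^{n-k}$ with the continuous one $A e^{-(t_n-s)A}$ for $s\in(t_k,t_{k+1}]$, where the two ``ages'' $t_n-s$ and $(n-k)\tau$ differ by at most one step. Feeding the comparison into the $\gamma$-multiplier theorem converts the continuous SMR estimate into its discrete $\ell^p$-analogue.

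The main obstacle is precisely this uniform $\mathcal{R}$-boundedness of the discrete families, together with the sharp $\gamma$-multiplier comparison needed to convert an $L^p(0,T;X_0)$-bound into an $\ell^p_\tau(\N;X_0)$-bound. It relies essentially on the $H^\infty$-calculus of $A$ and on the $L^q$-structure of $X_0$ so that $\gamma$-spaces are comparable to square-function spaces and a $\gamma$-Fubini is available for interchanging summation in $n$ with the $\gamma$-norm in $s$; these ingredients have presumably been set up earlier in the paper.
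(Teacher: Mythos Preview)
Your proposal contains a genuine gap in the direction \eqref{it1:introequiv}$\Rightarrow$\eqref{it2:introequiv}. You assert that ``Under SMR, $A$ has a bounded $H^\infty$-calculus on $X_0$ with angle $<\pi/2$ by \cite{NVWSMR}'', but this implication goes the wrong way: \cite{NVWSMR} proves that a bounded $H^\infty$-calculus \emph{implies} SMR, not the converse. In fact SMR does \emph{not} imply a bounded $H^\infty$-calculus; the paper explicitly emphasizes (see the Hilbert space corollary following Theorem~\ref{thm:introequiv}) that DSMR holds for all sectorial $A$ of angle $<\pi/2$ on Hilbert spaces, without any $H^\infty$-assumption. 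Since your entire forward direction rests on extracting $\mathcal{R}$-boundedness via Kalton--Weis from an $H^\infty$-calculus you do not have, the argument collapses.

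The paper's route is both different and more elementary. For \eqref{it1:introequiv}$\Rightarrow$\eqref{it2:introequiv} it uses only sectoriality: first a direct perturbation argument (Theorem~\ref{SMR implies EE DSMR}) compares the exponential Euler scheme to the continuous stochastic convolution via the pointwise kernel bound $\|A(e^{-\tau(n-j)A}-e^{-(t-s)A})\|_{\calL(X_{1/2},X_0)}\lesssim \tau^{-1/2}(n-j-1)^{-1}$, which follows from standard analytic semigroup estimates and interpolation; the error is then controlled by the one-sided BDG inequality (Proposition~\ref{prop:BDGtype2}) and the discrete Minkowski convolution inequality. A second perturbation (Theorem~\ref{EE DSMR implies r(z) DSMR Banach for r(infty)=0 shift}) transfers DSMR from exponential Euler to any rational scheme via the bound $\|A(e^{-n\tau A}-r(\tau A)^n)\|_{\calL(X_{1/2},X_0)}\lesssim \tau^{-1/2}n^{-\ell-1/2}$ of Lemma~\ref{Lemma:Fractional powers of difference EE and rational}. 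No $\gamma$-multipliers, no $\mathcal{R}$-boundedness, and no $H^\infty$-calculus enter. Your direction \eqref{it2:introequiv}$\Rightarrow$\eqref{it1:introequiv} is closer in spirit to the paper's Theorem~\ref{DSMR implies SMR shift}, though the paper makes the convergence quantitative via Proposition~\ref{prop:Convergece of rational scheme regular G} rather than invoking Trotter--Kato.
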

As already mentioned, the continuous analogue in \eqref{it1:introequiv} was introduced in \cite{AVstab,NVWSMR, vanneerven2014stochasticintegrationbanachspaces}. We will actually use the definition of \cite{AV19_QSEE_1,agresti2025nonlinear} where the space regularity is shifted by $1/2$ as this is more natural in applications to SPDEs (see Remark \ref{rem:BIPzero} for further information). 

Theorem \ref{thm:introequiv} remains valid for any space $X_0$ satisfying certain geometric properties, specifically it holds if $X_0$ is UMD and has type $2$. The strength of this result lies in the fact that it establishes discrete stochastic maximal regularity for a broad class of numerical schemes simultaneously. Moreover, it shows that it suffices to analyze the continuous-time setting, for which a well-developed theoretical framework already exists. Some of the immediate consequences for the discrete counterpart will be presented in Section \ref{sec:perm}.

The proof proceeds in several steps, detailed in Subsections \ref{SMR-EE-DSMR}, \ref{subs:equivDSMR}, and \ref{subs:DSMR-SMR}. Our approach not only establishes the equivalence in the stochastic setting but also suggests how similar equivalences could be obtained for other schemes in the deterministic setting. Indeed, in the deterministic case, such an equivalence was previously shown for the exponential Euler scheme in \cite{KaltonPortal} and for the implicit Euler scheme in \cite{kemmochi_discrete_2016}. Our techniques can also be used to prove the equivalence for other schemes in the deterministic case.

An important consequence of Theorem \ref{thm:introequiv} and a recent result in \cite{agresti2025nonlinear} is the following new result in the Hilbert space setting:

\begin{corollary}[Discrete Stochastic Maximal $\ell^p$-regularity for free]
Suppose that $X_0$ is a Hilbert space and that Assumption \ref{assum:mainDSMR} holds. Then $R$ has discrete stochastic maximal $\ell^p$-regularity for any $p\in [2, \infty)$.
\end{corollary}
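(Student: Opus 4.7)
The plan is simply to combine Theorem~\ref{thm:introequiv} with the Hilbert-space case of continuous-time stochastic maximal regularity, which is recalled in the paragraph preceding the corollary. Any Hilbert space $X_0$ is isometrically isomorphic to an $L^2(\mathcal O)$ for some $\sigma$-finite measure space $(\mathcal O,\Sigma,\mu)$, so the hypotheses of Theorem~\ref{thm:introequiv} apply with $q=2$; equivalently, as the authors note just after that theorem, $X_0$ is UMD and has type $2$, which is the geometric condition under which the equivalence in fact holds.

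Next, I would invoke the Hilbert-space result recorded in~\cite{agresti2025nonlinear} (building on the classical $L^2$-theory contained in~\cite{DPZ}): every sectorial operator on a Hilbert space with sectoriality angle strictly less than $\pi/2$ has stochastic maximal $L^p$-regularity for every $p\in[2,\infty)$. By Assumption~\ref{assum:mainDSMR} our $A$ is precisely of this form, so condition~\eqref{it1:introequiv} of Theorem~\ref{thm:introequiv} is fulfilled. The equivalence \eqref{it1:introequiv}$\Leftrightarrow$\eqref{it2:introequiv} then transfers the property to the discrete scheme $R$, i.e., yields \eqref{it2:introequiv}, which is exactly the statement of the corollary.

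There is essentially no main obstacle in this argument, since the two deep inputs are already in place: the continuous-to-discrete transfer theorem proved in the body of the paper, and the Hilbert-space SMR result. The only thing I would explicitly verify is that the hypotheses genuinely line up: Assumption~\ref{assum:mainDSMR} (which is in force in the statement of the corollary) is exactly what the Hilbert-space SMR result requires from $A$, and it is also the assumption under which the class of schemes covered by Theorem~\ref{thm:introequiv} is defined. Any conceivable difficulty is thus absorbed in the proofs of those two inputs, and the corollary itself is a direct consequence.
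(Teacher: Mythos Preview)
Your proposal is correct and follows essentially the same route as the paper: establish continuous stochastic maximal $L^p$-regularity for $A$ on the Hilbert space $X_0$ via the results in \cite{agresti2025nonlinear}, and then transfer it to the discrete scheme $R$ using the equivalence Theorem~\ref{thm:introequiv} (equivalently Theorem~\ref{thm:mainequiv}). The paper's own proof is only marginally more explicit in that it first invokes \cite[Theorem~3.13]{agresti2025nonlinear} to obtain $\SMR(2,T)$ and then extrapolates to general $p\in[2,\infty)$ via \cite[Theorem~8.2]{LoVer} (or \cite[Theorem~3.12]{agresti2025nonlinear}), whereas you cite the Hilbert-space $\SMR(p,T)$ statement in one go; this is a cosmetic difference, not a substantive one.
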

In previous attempts, more structure was assumed on the operator $A$ (e.g.\ variational structure or $H^\infty$-calculus). We show that this is not needed. The corresponding deterministic analogue also holds and has been established for various numerical schemes (see, for example, \cite[Proposition 2.7]{BlunckStudia}).

In addition to $\ell^p$-estimates, it is often important to obtain parabolic $\ell^\infty$-bounds for the sequence $(Y_n)_{n\geq 1}$. In the continuous-time setting, such {\em maximal estimates} are available (see \cite{NVWSMR}), where they are derived from optimal mixed space-time regularity results, which in turn rely on the $H^\infty$-calculus of the operator $A$. In the discrete case, we establish the following natural analogue of these maximal estimates.
\begin{theorem}[Maximal estimate with parabolic regularization]\label{thm:maxest} 
Let $q\in [2, \infty)$ and suppose that $X_0$ is isomorphic to a closed subspace of $L^q(\mathcal{O})$ with $(\mathcal{O}, \Sigma,\mu)$ a $\sigma$-finite measure space.
Suppose that Assumption \ref{assum:mainDSMR} holds, that $A$ has a bounded $H^\infty$-calculus on $X_0$ of angle $<\pi/2$, and that $0\in \rho(A)$.
Then for any $p\in (2, \infty)$, there is a constant $C$ such that for every $g\in L^{p}_{\mathbb F}(\Omega;L^p(\R_+;\gamma(\ell^2,X_{1/2})))$ and every stepsize $\tau>0$,
\begin{equation*}
	\E \sup_{n\ge1} \| Y_n \|_{X_{1-\frac 1p,p}}^p \le C^p \E\|g\|^{p}_{L^p(\R_+;\gamma(\ell^2,X_{1/2}))},
\end{equation*}	
where $Y=(Y_n)_{n\ge0}$ is given by \eqref{Eq:Definition of approximation scheme-intro}.
\end{theorem}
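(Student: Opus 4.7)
The plan is to mirror the continuous-time proof of the maximal estimate with parabolic regularization from \cite{NVWSMR, vanneerven2014stochasticintegrationbanachspaces} in the present discrete setting, using Theorem \ref{thm:introequiv} as the pivot that turns continuous SMR into its $\ell^p$-discrete counterpart. First, the bounded $H^\infty$-calculus of $A$ on $X_0$ together with $0\in\rho(A)$ yields the Kalton--Weis-type characterization
\[
\|x\|_{X_{1-1/p,p}}^p \eqsim \int_0^\infty \|A e^{-tA} x\|_{X_0}^p\, dt.
\]
Substituting $x=Y_n$, moving the supremum inside via the pointwise inequality $\sup_n\int\le\int\sup_n$, and applying Tonelli reduces the theorem to
\[
\int_0^\infty \E \sup_{n\ge 1}\|A e^{-tA} Y_n\|_{X_0}^p\, dt \lesssim \E\|g\|_{L^p(\R_+;\gamma(\ell^2,X_{1/2}))}^p.
\]

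For the discrete maximal function in the integrand I would apply a discrete Da Prato--Kwapień--Zabczyk factorization. Fix $\alpha\in(1/p,1/2)$, nontrivial precisely because $p>2$. Using the semigroup identity $R_\tau^{j+k}=R_\tau^jR_\tau^k$ and a discrete Beta sum $\sum_{k<j\le n}\tau(t_n-t_j)^{\alpha-1}(t_j-t_k)^{-\alpha}\eqsim\pi/\sin(\pi\alpha)$, one factors
\[
Y_n \eqsim C_\alpha \sum_{j=0}^{n-1}\tau\,(t_n-t_j)^{\alpha-1}\,R_\tau^{n-j}\,V_j^\alpha, \qquad V_j^\alpha := \sum_{k=0}^{j-1}(t_j-t_k)^{-\alpha}\,R_\tau^{j-k}\,\Delta_k,
\]
where $\Delta_k:=\int_{t_k}^{t_{k+1}}g\,dW$, modulo diagonal corrections. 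Applying $Ae^{-tA}$ and exploiting the $\mathcal R$-boundedness of $\{Ae^{-tA}R_\tau^m : t>0,\,m\ge 0,\,\tau>0\}$ (a consequence of the bounded $H^\infty$-calculus of angle $<\pi/2$ together with the $A(\theta)$-stability of $r$), the exponential decay $\|e^{-sA}\|\lesssim e^{-\eta s}$ afforded by $0\in\rho(A)$, and a Young/Hölder inequality on the weighted kernel $(t_n-t_j)^{\alpha-1}e^{-\eta(t_n-t_j)}$ (summable in $\ell^{p'}$ thanks to $\alpha>1/p$), transfers the supremum over $n$ into an $\ell^p$-sum in $j$ of $Ae^{-tA}V_j^\alpha$.

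The final step identifies $V_j^\alpha$ as a discrete stochastic convolution of the scheme applied to the fractionally weighted integrand $s\mapsto(t_j-s)^{-\alpha}g(s)$. A fractional-calculus identity absorbing the weight $(t_j-s)^{-\alpha}$ into a factor of $A^\alpha$ inside the $\gamma(\ell^2,X_{1/2})$-norm (cf.\ the continuous analogue in \cite{NVWSMR}), combined with Theorem \ref{thm:introequiv} and a reiteration argument in the scale $\{X_\alpha\}$, yields
\[
\Bigl(\sum_j \tau\,\E\|Ae^{-tA}V_j^\alpha\|_{X_0}^p\Bigr)^{1/p} \lesssim \bigl(\E\|g\|_{L^p(\R_+;\gamma(\ell^2,X_{1/2}))}^p\bigr)^{1/p}
\]
uniformly in $t$, and integration in $t$ closes the estimate. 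The main obstacle will be this second step: executing the discrete factorization with constants genuinely independent of $\tau$, verifying the discrete Beta identity and the boundary corrections, and handling the non-commutative interplay of $\{R_\tau^m\}$ and $\{Ae^{-tA}\}$ against the singular kernel $(t_n-t_j)^{\alpha-1}$. The deep interpolation-theoretic input alluded to in the introduction enters both in the Kalton--Weis characterization of $D_A(1-1/p,p)$ and in the fractional-power identities used inside the $\gamma$-spaces.
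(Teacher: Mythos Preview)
Your approach is precisely the one the paper says does \emph{not} carry over to the discrete setting (see the paragraph following Theorem~\ref{thm:maxest}), and the gap shows up concretely in the last step. The assertion that $\sum_j\tau\,\E\|Ae^{-tA}V_j^\alpha\|_{X_0}^p\lesssim\|g\|^p$ holds ``uniformly in $t$'' and that ``integration in $t$ closes the estimate'' is self-contradictory: integrating a $t$-uniform bound over $(0,\infty)$ diverges. What you actually need after swapping is $\sum_j\tau\,\E\|V_j^\alpha\|_{X_{1-1/p,p}}^p\lesssim\|g\|^p$, and here the scaling fails. The kernel of $V_j^\alpha$ in $\calL(X_{1/2},X_0)$ (after applying $A^{1-1/p}$) scales like $(t_j-t_k)^{-\alpha-1/2+1/p}$, which matches the critical $-1/2$ scaling of a DSMR-type bound only when $\alpha=1/p$; but your H\"older step on the outer sum requires $\alpha>1/p$ strictly. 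The ``fractional-calculus identity absorbing $(t_j-s)^{-\alpha}$ into $A^\alpha$'' that would rescue this in continuous time is exactly the relation $(-\partial_t)^\alpha e^{-tA}=A^\alpha e^{-tA}$, which has no analogue for $R_\tau^n$ because the scheme does not satisfy any useful differential identity in $n$. (Separately, the family $\{Ae^{-tA}R_\tau^m:t>0,\,m\ge0,\,\tau>0\}$ you call $\mathcal{R}$-bounded is not even uniformly bounded as $t\to0$.)

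The paper's route is genuinely different. It linearly interpolates $(Y_n)$ to a piecewise-linear $u$, invokes the trace embedding $L^p(\R_+;X_1)\cap H^{\sigma,p}(\R_+;X_{1-\sigma})\hookrightarrow C_b([0,\infty);X_{1-1/p,p})$ for $\sigma\in(1/p,1/2)$, and then estimates $\|(-\partial_t)^\sigma A^{1-\sigma}u\|_{L^p}$ \emph{by hand} via the Balakrishnan integral formula for $(-\partial_t)^\sigma$. Expanding $u(t)-u(t+h)$ through the interpolation formula produces seven explicit terms in $Y_n-Y_{n+j}$; the easy ones are controlled by DSMR (Corollary~\ref{cor:DSMRLq}), while the leading ones are rewritten as contour integrals against scalar-kernel discrete stochastic convolutions and bounded via the operator-valued $H^\infty$-calculus together with the $\mathcal{R}$-boundedness of the families $\mathcal{I}^\tau$, $\wt{\mathcal{I}}^\tau$ established in Section~\ref{sec:Rbdd}. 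In effect, the missing fractional identity is replaced by a direct computation of the fractional time derivative on the piecewise-linear interpolant.
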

It is important to note that $X_{1-\frac 1p,p}$ serves as the natural trace space associated with parabolic SPDEs. It is, in fact, the optimal space in which one can expect to obtain maximal estimates.

In the continuous-time setting, such estimates were proved using an optimal variant of the Da Prato–Kwapień–Zabczyk factorization method, combined with the operator-valued $H^\infty$-calculus, $\mathcal{R}$-boundedness of stochastic convolutions, and the analytical properties of the operator sum $ d/dt + A$.
However, this approach does not carry over to the discrete setting, and a new argument is required. The key difference is that no natural identities or estimates for fractional derivatives in time are available in the discrete setting. To solve this, we embed the problem into a continuous-time setting by interpolating linearly and we estimate the fractional time derivatives by hand. To do this, many terms arise, all of which need a separate argument. Some of the leading-order terms are estimated through the operator-valued $H^\infty$-calculus and new $\mathcal{R}$-boundedness results for stochastic convolutions.

A natural question is whether Theorem \ref{thm:maxest} also holds in the case
$p=2$. A key difference is that in this case there is no regularization. Indeed, if $X_0$ is a Hilbert space then  $X_{1/2,2} = X_{1/2}$ (see \cite[Corollary C.4.2]{Analysis1}), so that the regularity space for the solution $(Y_n)_{n\ge0}$ and the data $g$ coincide. Still, the maximal estimate turns out to be true at least when $X_0$ is a Hilbert space. Remarkably, it is enough to assume that either $0\in \rho(A)$ {\em or} that the $H^\infty$-calculus is bounded.
\begin{proposition}\label{prop:maxHilbert}
Let $X_0$ be a Hilbert space. Suppose that Assumption \ref{assum:mainDSMR} holds with $\theta=\pi/2$. Suppose that $0\in \rho(A)$ or that $A$ has a bounded $H^\infty$-calculus.
Then there is a constant $C$ such that for every $g\in L^{2}_{\mathbb F}(\Omega;L^2(\R_+;\gamma(\ell^2,X_{1/2})))$ and every stepsize $\tau>0$,
\begin{equation}
	\E \sup_{n\ge1} \| Y_n \|_{X_{1/2}}^2 \le C^2 \E\|g\|^{2}_{L^2(\R_+;\gamma(\ell^2,X_{1/2}))}.
\end{equation}	
\end{proposition}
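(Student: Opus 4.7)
The strategy is to perform a discrete It\^{o}-type expansion of the $X_{1/2}$-energy of the scheme and then extract the $\E\sup_n$ bound via the Burkholder-Davis-Gundy (BDG) inequality. Since $\omega(A)<\pi/2$, the fractional power $A^{1/2}$ is defined on $X_0$ and commutes with $R_\tau=r(\tau A)$ via the sectorial functional calculus. Setting $Z_n := A^{1/2}Y_n$ and $\tilde g := A^{1/2}g$, the process $(Z_n)$ satisfies the same recursion driven by $\tilde g$, and in the Hilbert setting $\|Y_n\|_{X_{1/2}} \simeq \|Z_n\|_{X_0}$ under either of our hypotheses. Thus it suffices to prove
\[
\E\sup_{n\ge 1}\|Z_n\|_{X_0}^2 \lesssim \E\|\tilde g\|_{L^2(\R_+;\gamma(\ell^2,X_0))}^2.
\]

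Next, using the hypothesis $\theta=\pi/2$ (so $|r(z)|\le 1$ on the right half-plane), we renorm $X_0$ by an equivalent inner product with respect to which $R_\tau$ is a contraction. In the $H^\infty$-calculus case (necessarily of angle $<\pi/2$), this follows from McIntosh's theorem: $A$ is similar to an m-accretive operator, and von Neumann's inequality applied to $r$ then yields $\|R_\tau\|\le 1$ in the new norm. In the case $0\in\rho(A)$, the uniform $A$-stability of $r$ together with the spectral gap yields at least $\|R_\tau^n\|\le M$ uniformly in $n$ and $\tau$, and one tracks the constant $M$ through the remaining estimates.

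Writing $\Delta\tilde M_n := \int_{t_n}^{t_{n+1}}\tilde g\,dW$ and expanding $Z_{n+1}=R_\tau Z_n + R_\tau\Delta\tilde M_n$ in the equivalent inner product,
\[
\|Z_{n+1}\|^2 \le \|Z_n\|^2 + 2\operatorname{Re}\langle R_\tau Z_n, R_\tau\Delta\tilde M_n\rangle + \|R_\tau\Delta\tilde M_n\|^2.
\]
Telescoping and taking the supremum in $N$,
\[
\sup_{N\ge 1}\|Z_N\|^2 \le 2\sup_N\Big|\sum_{n=0}^{N-1}\operatorname{Re}\langle R_\tau Z_n, R_\tau\Delta\tilde M_n\rangle\Big| + \sum_{n=0}^\infty\|R_\tau\Delta\tilde M_n\|^2.
\]
The It\^{o} isometry bounds the expectation of the second term by $\E\|\tilde g\|_{L^2(\gamma)}^2$. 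The first term is the maximum of a scalar discrete martingale whose $n$-th conditional variance, computed via the It\^{o} isometry on $[t_n,t_{n+1}]$, is bounded by $\|Z_n\|^2\int_{t_n}^{t_{n+1}}\|\tilde g\|_\gamma^2\,ds$. BDG followed by Cauchy-Schwarz then gives
\[
\E\sup_N\Big|\sum_{n=0}^{N-1}\operatorname{Re}\langle R_\tau Z_n, R_\tau\Delta\tilde M_n\rangle\Big| \le C\bigl(\E\sup_n\|Z_n\|^2\bigr)^{1/2}\bigl(\E\|\tilde g\|_{L^2(\gamma)}^2\bigr)^{1/2},
\]
and Young's inequality absorbs the $\E\sup_n\|Z_n\|^2$ term into the left-hand side.

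The principal obstacle is obtaining the contractive renorming of $R_\tau$ in both sub-cases. The $H^\infty$-calculus case is standard via McIntosh. The case $0\in\rho(A)$ is more delicate, since sectoriality plus invertibility does not by itself yield similarity to an m-accretive operator; one must instead work with the uniform power-boundedness $\|R_\tau^n\|\le M$ afforded by $A$-stability and the spectral gap, and then manage this extra constant through the absorption step.
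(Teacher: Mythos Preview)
Your energy/BDG argument is a valid alternative to the paper's Sz.-Nagy dilation approach \emph{in the $H^\infty$-calculus sub-case}: once $R_\tau$ is a contraction in an equivalent Hilbert norm, your telescoping inequality, Davis' inequality on the scalar martingale $\sum_n \operatorname{Re}\langle R_\tau Z_n, R_\tau\Delta\tilde M_n\rangle$, and absorption all go through (you should add a routine truncation to ensure $\E\sup_n\|Z_n\|^2<\infty$ before absorbing). The paper instead dilates $R_\tau$ to a unitary via Sz.-Nagy and reduces $Y_n$ to an honest martingale, which avoids the absorption step and yields the estimate for all $s\in(0,\infty)$ at once; your approach is tied to $s=2$ but is more elementary.

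The $0\in\rho(A)$ sub-case, however, has a real gap. Two problems:
\begin{enumerate}
\item Your reduction $Z_n=A^{1/2}Y_n$, $\|Y_n\|_{X_{1/2}}\simeq\|Z_n\|_{X_0}$, assumes $D(A^{1/2})=[X_0,D(A)]_{1/2}$. This requires BIP and fails for general sectorial operators on Hilbert spaces (McIntosh's counterexample to the abstract Kato square-root problem). Invertibility of $A$ does not help here.
\item Even granting that, your fallback to uniform power-boundedness $\|R_\tau^n\|\le M$ cannot rescue the telescoping. The key pointwise inequality $\|Z_{n+1}\|^2\le\|Z_n\|^2+\ldots$ uses $\|R_\tau Z_n\|\le\|Z_n\|$; with only $\|R_\tau\|\le 1+\varepsilon$ you pick up a factor $(1+\varepsilon)^{2N}$ after $N$ steps, which is unbounded. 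Power-boundedness does not even yield similarity to a contraction (Foguel's solution to the Halmos problem), so there is no renorming that repairs this.
\end{enumerate}
The paper's resolution is to avoid $X_0$ entirely and work on $X_{1/2}$: in the Hilbert setting $X_{1/2}=(X_0,X_1)_{1/2,2}$ is a real interpolation space, and by Dore's theorem every invertible sectorial operator has a bounded $H^\infty$-calculus on such spaces. This collapses the $0\in\rho(A)$ case into the $H^\infty$-calculus case on $X_{1/2}$, after which the contractive renorming (and then either your energy argument or the paper's dilation) applies.
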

The assumption $r(\infty)=0$ is not used in the proof of Proposition \ref{prop:maxHilbert}.

\subsection*{Overview}

In Section \ref{sec:Prel} we present some essential mathematical background, covering sectorial operators, functional calculus, and key properties of numerical approximation schemes $R_\tau = r(\tau A)$, including crucial error estimates. We also review stochastic integration theory and the Burkholder-Davis-Gundy inequalities.

Section \ref{sec:DSMR} formally defines discrete stochastic maximal $\ell^p$-regularity, denoted by $\DSMR(p,T)$. Our central achievement here is Theorem \ref{thm:mainequiv}, establishing the equivalence between $\DSMR$ for a scheme $R$ and the continuous $\SMR$ of the operator $A$, which directly yields new $\DSMR$ results (e.g., Corollary \ref{cor:DSMRLq}). This section contains Theorem \ref{thm:introequiv} as a special case. 

In Section \ref{sec:perm}, we demonstrate that $\DSMR$ inherits key permanence properties (such as $p$- and $T$-independence) from $\SMR$ via Theorem \ref{thm:mainequiv}. Notably, we also establish the equivalence of (discrete) stochastic maximal $\ell^p$-regularity to a time-weighted formulation in Theorem \ref{thm:weighted}.

Section \ref{sec:Rbdd} provides a critical technical tool for subsequent analysis. We establish the uniform $\mathcal{R}$-boundedness of relevant discrete stochastic convolution operator families, which is essential for proving the maximal estimates in Section \ref{sec:discretemax}.

The main result of Section \ref{sec:discretemax} is Theorem \ref{thm:maximalest} and contains the maximal estimate of Theorem \ref{thm:maxest} as a special case. This result is obtained by employing the $H^\infty$-functional calculus for the operator $A$ and the $\mathcal{R}$-boundedness results from Section \ref{sec:Rbdd}. A Hilbert space version for $p=2$ is also presented, and it particularly contains Proposition \ref{prop:maxHilbert} as a special case.

\subsection*{Notation}

Let $w_{\alpha}(t) = t^{\alpha}$. For $p\in [1, \infty)$, a stepsize $\tau>0$, and a Banach space $Z$, let $\ell^p_{\tau,w_{\alpha}}(Z)$ denote the space of sequences $z\coloneq (z_{j})_{j\geq 0}$ in $Z$ such that
\[\|z\|_{\ell^p_{\tau,w_{\alpha}}(Z)} \coloneq \Big(\sum_{n\geq 0} w_{\alpha}(t_{n+1}) \tau\|z_n\|^p_Z\Big)^{1/p}<\infty,\]
where $t_n = \tau n$ for $n\geq 1$. If a sequence $z = (z_n)_{n=0}^N$ consists of finitely many terms, we will use the same notation with the convention that $z_n = 0$ for $n\geq N+1$. 
For $T\in (0,\infty]$ we write $L^p (0,T,w_{\alpha};Z)$ for the space of measurable functions $f\colon (0,T)\to Z$ such that
\[\|f\|_{L^p (0,T,w_{\alpha};Z)} \coloneq \Big(\int_{0}^T \|f(t)\|_Z^p w_{\alpha}(t) dt\Big)^{1/p}<\infty.\]
Whenever $\alpha=0$, we will omit the weight from the above notations. 

For $a,b\in \R$ we will use the standard notation $a\lesssim b$ in case there is an (unimportant) constant $C$ such that $a\leq Cb$. Moreover, we write $a\simeq b$ if $a\lesssim b$ and $b \lesssim a$. In case we want to emphasize that $C$ depends on e.g.\ $p$, we write $a\lesssim_p b$ or $a \simeq_p b$. 

For a Banach couple $(X_0, X_1)$, we use 
\[ X_{\alpha,p} \coloneq (X_0, X_1)_{\alpha,p} \ \ \text{and} \ \ X_{\alpha} \coloneq [X_{0}, X_1]_{\alpha} , \ \ \ \alpha\in (0,1), \ p\in (1,\infty),\]
for the real and complex interpolation spaces respectively.

The operator $A$ will always be a sectorial operator on $X_0$ with domain $D(A) = X_1$. For $\theta \in (0,\pi)$, let $\Sigma_{\theta} \coloneq \{z\in \C\setminus\{0\}\colon  |\arg(z)|<\theta\}$ and let $\partial \Sigma_{\theta}$ denote its boundary oriented counterclockwise. The function $r\colon \Sigma_{\theta}\to \C$ denotes a rational function or $r(z)=e^{-z}$, and $R_{\tau} \coloneq r(\tau A)$.

The notation $W$ is used to denote a cylindrical Brownian motion, and $I_g(t) \coloneq  \int_0^t g d W$ for the stochastic integral of $g$. We also write $\Delta_n I_g \coloneq I_g(t_{n+1}) - I_g(t_{n})$.

\subsubsection*{Acknowledgements}
{The authors thank Katharina Klioba and Emiel Lorist for helpful comments. The authors also express their gratitude to the anonymous referees for their insightful suggestions.}

\section{Preliminaries}\label{sec:Prel}

\subsection{Sectorial operators, functional calculus, and interpolation}

For details on sectorial operators, semigroup theory, and functional calculus, the reader is referred to \cite{zbMATH01354832,Haase:2, hytonen2017analysis, Analysis3, kunstmann2004maximal}. We briefly recall some of the key concepts used throughout the paper. For details on interpolation theory the reader is referred to \cite{Analysis1, Tri95} and we will rely on standard results on real and complex interpolation. See the notation subsection at the end of the introduction.

Let $(A,D(A))$ be a closed operator on a Banach space $X$. The operator $A$ is called {\em sectorial} if the domain and the range of $A$ are dense in $X$ and there exists $\nu\in (0,\pi)$ such that $\sigma(A)\subseteq \overline{\Sigma_{\nu}}$, where $ \Sigma_{\nu}\coloneq \{z\in \C\setminus\{0\}\colon |\arg z|<\nu\}$, and there exists $C>0$ such that
\begin{equation}
\label{eq:sectorialA}
|\lambda|\|(\lambda-A)^{-1}\|_{\calL(X)}\leq C, \ \ \lambda \in \C\setminus \overline{\Sigma_{\nu}}.
\end{equation}
The {\em angle of sectoriality} $\omega(A)\in[0,\pi)$ is defined as the infimum over all $\nu$ for which a $C$ exists such that \eqref{eq:sectorialA} holds. 

If $\omega(A)<\pi/2$, then $-A$ generates a strongly continuous semigroup $(e^{-tA})_{t\geq 0}$, which extends to a bounded analytic function on a sector.

\subsubsection{Functional calculus}
Let $H^1(\Sigma_{\theta})$ be the set of all holomorphic functions $f\colon \Sigma_{\theta}\to \C$ such that 
\[\|f\|_{H^1(\Sigma_{\theta})} \coloneq \sup_{|\phi|<\theta} \int_{0}^\infty |f(s e^{i\phi})| \frac{ds}{s}<\infty.\]
Moreover, $H^\infty(\Sigma_{\theta})$ is the space of bounded holomorphic functions $f\colon\Sigma_{\theta}\to \C$ equipped with the supremum norm.

If $A$ is a sectorial operator of angle $\omega(A)\in [0,\pi)$ and $f\in H^1(\Sigma_{\theta})$ with $\theta\in (\omega(A),\pi)$ then one can define the bounded operator $f(A)$ by the Dunford integral (contour oriented counterclockwise)
 $$f(A) \coloneq  \frac{1}{2\pi i} \int_{\partial \Sigma_{\nu}} f(z) (z-A)^{-1} \,dz,$$
 where $\nu \in (\omega(A),\theta)$ is chosen arbitrarily  (see \cite[Section 10.2]{hytonen2017analysis}). Moreover, there is a constant $C=C(\theta,A)$ such that 
 \begin{equation} \label{ineq:H1 calculus bound}
 	\sup_{t>0}	\|f(tA)\|_{\calL(X)} \le C \|f\|_{H^1(\Sigma_\theta)}.
 \end{equation}

The above $H^1$-calculus is useful and can be used for any sectorial operator. In many important cases one can even prove that for all $\nu \in (\omega(A),\theta)$ and for every $f\in H^1(\Sigma_{\nu}) \cap H^\infty(\Sigma_{\nu})$ one has
\begin{align}\label{eq:Hinfty}
 \|f(A)\|_{\calL(X)}\leq C\|f\|_{H^\infty(\Sigma_{\nu})}.
\end{align}
In this case we say that $A$ has a {\em bounded $H^\infty$-calculus}. One can show that \eqref{eq:Hinfty} uniquely extends to all $f\in H^\infty(\Sigma_{\nu})$. The infimum over all possible $\nu$ is called {\em the angle of the $H^\infty$-calculus}. 

In the paper, we will use the $H^\infty$-calculus as a black box to prove several new results. By now, large classes of sectorial operators $A$ are known to have a bounded $H^\infty$-calculus. One could even say that on $L^q$-spaces the counterexamples are typically only rather academic. A comprehensive list of examples can be found in the notes of \cite[Chapter 10]{hytonen2017analysis}.

A general class of operators with a bounded $H^\infty$-calculus, in the case $X$ is a Hilbert space, is given by the following: all operators $A$ for which 
$-A$ generates a contraction semigroup on $X$. In this case, $\omega(A)$ coincides with the angle of the $H^\infty$-calculus. For details the reader is referred to \cite[Theorems 10.2.24 and 10.4.21]{hytonen2017analysis}.

Another class can be given on $X = L^q$ for $q\in (1, \infty)$: all operators $A$ for which $-A$ generates a positive contraction semigroup on $X$. In this case, one obtains that the angle of the $H^\infty$-calculus is $\leq \pi/2$. Moreover, if $A$ is also sectorial of angle $\omega(A)<\pi/2$, then one obtains that the angle of the $H^\infty$-calculus is $<\pi/2$ as well (although the two angles might differ). Details can be found in \cite[Theorems 10.7.12 and 10.7.13]{hytonen2017analysis}.

\subsubsection{Standard estimates for semigroups}
Let $(A,D(A))$ be a sectorial operator of angle $<\pi/2$ on $X$. It is well-known that $-A$ generates a strongly continuous bounded analytic semigroup $(e^{-zA})_{z\in \Sigma_{\sigma}}$ for some $\sigma>0$ (see \cite[Appendix G]{hytonen2017analysis}). Moreover, letting $\lambda>0$, we can apply \eqref{ineq:H1 calculus bound} to $f(z) = z^{\lambda} e^{-z}$, which is in $H^1(\Sigma_{\nu})$ for any $\nu<\pi/2$, to obtain 
\begin{align*}
  \sup_{t>0}\|(t A)^{\lambda} e^{-tA}\|_{\calL(X)}<\infty.
\end{align*}
This implies that, for any $\epsilon>0$, $\sup_{t>0}\|t^{\varepsilon} A^{1+\varepsilon} e^{-tA}\|_{\calL(D(A),X)}<\infty$. Interpolating the two bounds gives that there is an $M\geq 0$ such that
\begin{equation} \label{Eq:Shifted decay 1}
	\|A^{1+\varepsilon} e^{-tA}\|_{\calL(V,X)} \leq \frac{M}{t^{\frac12+\eps}}, \ t>0,
\end{equation}
where $V$ is any Banach space that is continuously embedded into   $(X, D(A))_{1/2,\infty}$. For example, one can take $V=D(A^{1/2})$, $V = [X, D(A)]_{1/2}$ or $V = (X, D(A))_{1/2,q}$ with $q\in [1, \infty]$.

Another standard bound for a bounded strongly continuous semigroup is 
\begin{equation} \label{Eq: Decay rate of semigroup difference}
	\|\exp{-tA} - \exp{-sA}\|_{\mathcal{L}(V, X)} \leq M (t - s)^{\alpha},
\end{equation}
where $\alpha \in [0,1]$ and $V$ is as above. Indeed, for $V= D(A)$ this holds for $\alpha=1$ and follows by writing $  \exp{-sA} - \exp{-tA}=\int_s^t Ae^{-rA} dr$ on $D(A)$, and using that $\sup_{r>0}\|e^{-rA}\|_{\calL(X)}<\infty$. For $V = X$ the bound \eqref{Eq: Decay rate of semigroup difference} holds with $\alpha=0$. The general case follows by interpolation. 

\subsubsection{Fractional powers}
For a sectorial operator the fractional power $A^{z}$ can be defined for any $z\in \C$ via the so-called extended functional calculus (see \cite[Chapter 3]{Haase:2} and \cite[Chapter 15]{Analysis3}). In general, these operators are again closed unbounded operators. 

The operator $A$ is said to have {\em bounded imaginary powers (BIP)} if $A^{it}$ extends to a bounded operator on $X$ for any $t\in \R$. In particular, this holds if $A$ has a bounded $H^\infty$-calculus. Indeed, one can apply the calculus to the holomorphic and bounded function $z\mapsto z^{it}$. An important consequence of BIP is the following identification of the domains of the fractional powers and the complex interpolation spaces.
\begin{lemma}\label{lem:BIP}
Suppose that $A$ is a sectorial operator and that $A$ has BIP. Then for all $\theta\in (0,1)$, \[D(A^{\theta}) = [X, D(A)]_{\theta}\]
with equivalent norms.
\end{lemma}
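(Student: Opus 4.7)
The plan is to establish the norm equivalence by proving two continuous inclusions $D(A^\theta)\hookrightarrow[X,D(A)]_\theta$ and $[X,D(A)]_\theta\hookrightarrow D(A^\theta)$. Both directions exploit the definition of complex interpolation via holomorphic extensions on the strip $S=\{z\in\C\colon 0<\mathrm{Re}(z)<1\}$, and both rely crucially on the BIP hypothesis, which guarantees an exponential-type bound
\[
\|A^{it}\|_{\calL(X)}\le Me^{\omega|t|}, \qquad t\in\R,
\]
for some $\omega\ge 0$. Throughout, a Gaussian factor $e^{\delta(z-\theta)^2}$ will be used to offset this growth so that all quantities on the boundary of $S$ remain bounded.

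For the first inclusion, I would fix $x\in D(A^\theta)$ and consider the function
\[
f(z)\coloneq e^{\delta(z-\theta)^2}\,A^{\theta-z}x, \qquad z\in\overline{S},
\]
where $A^{\theta-z}$ is understood via the extended functional calculus. After possibly shifting $A$ to $A+\epsilon$ to guarantee that $A^{\theta-z}$ acts boundedly on the relevant vectors and then letting $\epsilon\downarrow 0$ at the end via the closedness of $A^\theta$, I would verify: (i) $f$ is holomorphic on $S$ and continuous on $\overline{S}$ with values in $X+D(A)$; (ii) $f(\theta)=x$; (iii) for $\mathrm{Re}(z)=0$ one has $f(it)=e^{\delta(it-\theta)^2}A^{-it}A^\theta x$, whose $X$-norm is controlled by $e^{-\delta t^2+\omega|t|}\|A^\theta x\|$ and hence is uniformly bounded; (iv) for $\mathrm{Re}(z)=1$, the identity $Af(1+it)=e^{\delta(1+it-\theta)^2}A^{-it}A^\theta x$ yields the analogous uniform bound in $D(A)$. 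Applying this construction to both $x$ and $A$ replaced by $A+1$ (to also pick up the $X$-norm of $x$ itself) and taking the infimum in the definition of $[X,D(A)]_\theta$ gives $\|x\|_{[X,D(A)]_\theta}\lesssim \|x\|_X+\|A^\theta x\|_X$.

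For the reverse inclusion, I would take $x\in[X,D(A)]_\theta$ together with an admissible $f\colon\overline{S}\to X+D(A)$ with $f(\theta)=x$, and consider
\[
g(z)\coloneq e^{\delta(z-\theta)^2}\,A^{z-\theta}f(z).
\]
On $\mathrm{Re}(z)=0$ one uses boundedness of $A^{-\theta}$ on $D(A)$-valued parts, together with BIP, to control $g(it)$ in $X$; on $\mathrm{Re}(z)=1$, one writes $A^{z-\theta}=A^{1-\theta}A^{it-1}\cdot A$ and applies it to $f(1+it)\in D(A)$, again using BIP and the Gaussian decay. By the three-lines lemma applied in the strong sense to a dense family of functionals, $g(\theta)$ is well-defined in $X$ with $\|g(\theta)\|_X\lesssim \|x\|_{[X,D(A)]_\theta}$. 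Identifying $g(\theta)=A^0 x$ up to a $D(A^\theta)$-closure argument then yields $x\in D(A^\theta)$ with $\|A^\theta x\|_X\lesssim \|x\|_{[X,D(A)]_\theta}$.

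The principal obstacle is that the fractional powers $A^{z}$ are unbounded operators for $\mathrm{Re}(z)\neq 0$, so the manipulations above must be legitimized either by a density argument using, for instance, the approximants $n(n+A)^{-1}x\in\bigcap_{k}D(A^k)$ and the closedness of $A^\theta$, or by working first with the resolvent regularization $A_\epsilon=A+\epsilon$ and passing to the limit. A secondary technical point is to check holomorphy and strong continuity of $z\mapsto A^{z}y$ on the closed strip for $y$ in suitable dense subspaces — this follows from the extended functional calculus together with the resolvent estimate \eqref{eq:sectorialA}, but requires care in combining the two boundary bounds via the three-lines lemma.
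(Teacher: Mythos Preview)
The paper does not give its own proof of this lemma; it simply refers to \cite[Theorem~6.6.9]{Haase:2}, \cite[Theorem~15.3.9]{Analysis3}, and \cite[1.15.3]{Tri95}. Your outline follows the classical strategy found in those references (analytic families on the strip, Gaussian damping to absorb the exponential growth of $\|A^{it}\|$, and the three-lines lemma), and your first inclusion $D(A^\theta)\hookrightarrow[X,D(A)]_\theta$ is essentially correct once the domain issues you flag are handled by the regularisation you describe.

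Your second inclusion, however, contains a genuine error. With $g(z)=e^{\delta(z-\theta)^2}A^{z-\theta}f(z)$ you obtain $g(\theta)=x$, so even if all boundary bounds held, the three-lines lemma would only yield the trivial estimate $\|x\|_X\lesssim\|x\|_{[X,D(A)]_\theta}$; no information about $A^\theta x$ is produced, and your concluding ``$D(A^\theta)$-closure argument'' cannot manufacture it. Moreover, on $\mathrm{Re}\,z=0$ your bound requires $A^{-\theta}$ to act boundedly on $X$, which fails for a general sectorial operator without $0\in\rho(A)$. The correct choice is $h(z)=e^{\delta(z-\theta)^2}A^{z}f(z)$, so that $h(\theta)=A^\theta x$ is precisely the quantity to control: then $h(it)=e^{\cdots}A^{it}f(it)$ is bounded in $X$ by BIP, and $h(1+it)=e^{\cdots}A^{it}\bigl(Af(1+it)\bigr)$ is bounded in $X$ because $f(1+it)\in D(A)$, whence three-lines gives $\|A^\theta x\|_X\lesssim\|x\|_{[X,D(A)]_\theta}$. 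The regularisation you mention (approximants $n(n+A)^{-1}f$ or the shift $A+\varepsilon$) is indeed needed to make $A^{z}f(z)$ meaningful in the interior of the strip and is carried out carefully in the cited references.
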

A proof can be found in \cite[Theorem 6.6.9]{Haase:2}, \cite[Theorem 15.3.9]{Analysis3}, \cite[1.15.3]{Tri95}.

\subsection{Approximation schemes}
	
		Let $X$ be a Banach space, $A$ a sectorial operator on $X$ with angle $\omega(A)<\frac \pi2$ and let $(\exp{-t A})_{t \ge 0}$ be the bounded analytic strongly continuous semigroup generated by $-A$. An approximation scheme is a strongly continuous function $R\colon [0,\infty)\to\L(X)$ with $R(0)=I$ that approximates the semigroup in the sense that there are $\alpha >0$ and a Banach space $V$ with $V\hookrightarrow X$ densely such that 
    \begin{equation} \label{eq:DefApproxScheme}
      \|R_ \tau ^n - \exp{-n\tau A} \|_{\L(V,X)} \leq C \tau^a, \qquad n\ge 0,\tau>0,
    \end{equation}
   where we set $R_\tau\coloneq R(\tau)$ and $C>0$ is a constant independent of $n,\tau$. A common choice for $V$ is the fractional domain $V=D(A^\alpha)$.  An approximation is called stable if there is a constant $C>0$ such that 
\begin{equation}\label{eq:stab}
  \|R_\tau ^n \|_{\L(X)} \le C, \quad n\ge 0, \tau >0.
\end{equation}
Typical examples of stable approximation schemes are the exponential Euler method $R_\tau = e^{-\tau A}$, the implicit Euler method $R_\tau\coloneq(1+\tau A)^{-1}$, the Crank-Nicolson method $R_\tau \coloneq(1-\frac 12 \tau A)(1+\frac 12 \tau A)^{-1}$ and, in general, any rational scheme $R_\tau \coloneq r(\tau A)$ where $r\colon \Sigma_\theta \to \mathbb C$ is a rational function that is consistent and $A(\theta)$-stable with $\theta \in (\omega(A),\frac \pi 2]$ (see \cite[Theorems 9.1 and 9.2]{thomee1984galerkin}).    Recall that a rational function $r$ is called 
	{\em consistent of order $\ell\ge1$} if $|r(z)-\exp{-z}| \le C|z|^{\ell+1}$ as $z \to 0$, and $A(\theta)$-stable if $|r(z)|\le1$ for $z \in \Sigma_\theta$. Note that if $r$ is $A(\theta)$-stable with $\theta \in (\omega(A) , \frac \pi 2] $ we can write
	$$r(z) = \gamma + \sum_{k=1}^K \sum_{j=1}^J \gamma_{j,k} (a_j+z)^{-k},$$
	where $\gamma= r(\infty)$, $K,J \in \N$ and the poles $-a_j \notin \Sigma_\theta$. Hence, we can define
	\begin{equation}\label{Eq:Definition of r(tauA)}
		r(\tau A) \coloneq \gamma I + \sum_{k=1}^K \sum_{j=1}^J \gamma_{j,k} (a_j+ \tau A)^{-k}, \quad \tau>0,
	\end{equation}
which is a bounded operator on $X$ since $-a_j/\tau \in \rho(A)$.

We focus on the exponential Euler scheme $R_\tau = e^{-\tau A}$ and on rational schemes $R_\tau=r(\tau A)$ with the extra condition that $r(\infty)=0$ since they possess stronger convergence and stability properties (see Lemma \ref{Lemma:Fractional powers of difference EE and rational}). This excludes the Crank-Nicolson scheme, but includes the commonly used implicit Euler scheme, which is consistent of order $\ell=1$ and $A(\pi/2)$-stable, and the sub-diagonal Pad\'e approximation schemes given by the rational functions $r_{n,n+1}=P_n/Q_{n+1}$ and $r_{n,n+2}=P_n/Q_{n+2}$ $(n\ge 1)$, where
		\begin{align*}
			P_n(z) \coloneq \sum_{j=0}^{n} \frac{(n+m-j)! \, n!}{(n+m)! \, j! \, (n-j)!} (-z)^j, \qquad
			Q_m(z) \coloneq \sum_{j=0}^{m} \frac{(n+m-j)! \, m!}{(n+m)! \, j! \, (m-j)!} z^j,
		\end{align*}
       which are consistent of order $\ell=2n+1$ and $\ell= 2n+2$ respectively, $A(\pi/2)$-stable, and satisfy $r_{n,n+1}(\infty)=r_{n,n+2}(\infty)=0$ (see \cite[Theorem 4.12]{zbMATH00050395}). Another example is the Padé approximation scheme given by $r_{0,3}(z)=(1+z+z^2/2!+z^3/3!)^{-1}$, which is consistent of order $\ell=3$, $A(\theta)$-stable with $\theta \le 88.23^\circ$ and satisfies the extra condition $r(\infty)=0$ (see \cite{zbMATH00050395}).

The following lemma provides powerful estimates for such rational functions $r$, and can be found in \cite[Lemmas 9.4 and 9.5]{thomee1984galerkin}.
\begin{lemma}\label{lem:rational-est}
Let $\theta\in (0,\pi/2]$. Let $r\colon \Sigma_{\theta}\to \C$ be a rational function that is consistent of order $\ell\ge1$ and $A(\theta)$-stable, and suppose that $r(\infty) = 0$.
Let $\nu\in (0,\theta)$.
Then there exist constants $c=c(r,\nu)$ and $C=C(r,\theta)$ such that for all $z\in \Sigma_{\nu}$ and $n\geq 1$,
\begin{alignat}{2}
	|r(z)^n - e^{-nz}| &\leq C n |z|^{\ell+1} e^{-cn|z|}, && \quad |z|\le 1, \label{Ineq:Difference of exp and rational z<1}
\\ |r(z)|^n& \leq C e^{-cn|z|}, && \quad |z|\leq 1,\label{Ineq:growth rational z<1}
\\	|r(z)|^n &\leq C \inv{|z|} e^{-cn}, &&\quad |z|\ge 1. \label{Ineq:Difference of exp and rational z>1}
\end{alignat}
Moreover, one can take $c \le \cos \nu$ so that $|\exp{-z}| \le \exp{-c|z|}$ for $z\in \Sigma_{\nu}$.
\end{lemma}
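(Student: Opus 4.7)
The plan is to extract a single exponential bound $|r(z)| \le e^{-c|z|}$ on $\Sigma_\nu \cap \{|z|\le 1\}$ for some $c\in(0,\cos\nu]$, and then derive each of the three displayed inequalities from it together with the properness of $r$ at $\infty$ supplied by $r(\infty)=0$.

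For the near-origin behaviour, consistency of order $\ell\ge1$ gives $r(z) = e^{-z}+O(|z|^{\ell+1}) = 1 - z + O(|z|^2)$ as $z\to 0$, whence
\begin{align*}
|r(z)|^2 = 1 - 2\Re z + O(|z|^2), \qquad |r(z)| \le 1 - \cos(\nu)|z| + O(|z|^2)
\end{align*}
on $\Sigma_\nu$. Consequently, for any $c_1\in(0,\cos\nu)$ there is $\rho>0$ with $|r(z)|\le e^{-c_1|z|}$ on $\Sigma_\nu\cap\{|z|\le\rho\}$. To upgrade this to $|z|\le 1$, observe that $K\coloneq\overline{\Sigma_\nu}\cap\{\rho\le|z|\le 1\}$ is compact and, because $\nu<\theta$, contained in the open sector $\Sigma_\theta$. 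Since $r$ is holomorphic and non-constant on $\Sigma_\theta$, is bounded by $1$ there by $A(\theta)$-stability, and satisfies $r(\infty)=0$, the Phragm\'en--Lindel\"of principle forces $|r|<1$ strictly on $\Sigma_\theta$, and compactness gives $q\coloneq\sup_K|r|<1$. Choosing $c\in(0,\cos\nu]$ with $c\le c_1$ and $e^{-c}\ge q$, the bound $|r(z)|\le e^{-c|z|}$ holds throughout $\Sigma_\nu\cap\{|z|\le 1\}$, which upon taking $n$th powers yields \eqref{Ineq:growth rational z<1}; the constraint $c\le\cos\nu$ simultaneously delivers the auxiliary estimate $|e^{-z}|\le e^{-c|z|}$ on $\Sigma_\nu$ stated at the end.

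With this in hand, \eqref{Ineq:Difference of exp and rational z<1} follows by telescoping. Setting $a=r(z)$ and $b=e^{-z}$,
\begin{align*}
r(z)^n - e^{-nz} = (a-b)\sum_{k=0}^{n-1} a^k b^{n-1-k},
\end{align*}
consistency gives $|a-b|\le C|z|^{\ell+1}$ on $|z|\le 1$, and the preceding step provides $|a|^k|b|^{n-1-k}\le e^{-c(n-1)|z|}$. Summing the $n$ terms and absorbing the harmless factor $e^{c|z|}\le e^c$ into the constant yields the bound $Cn|z|^{\ell+1}e^{-cn|z|}$. For \eqref{Ineq:Difference of exp and rational z>1}, the property $r(\infty)=0$ means $r=p/q$ with $\deg q > \deg p$ and no poles in $\Sigma_\theta$, so there is $M>0$ with $|r(z)|\le M/|z|$ on $\overline{\Sigma_\nu}\cap\{|z|\ge 1\}$. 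Choosing $R>M$ gives $|r|\le M/R<1$ on $|z|\ge R$, while applying the same maximum-principle argument to the compact sector $\overline{\Sigma_\nu}\cap\{1\le|z|\le R\}$ produces some $q_2<1$ with $|r|\le q_2$ throughout $\overline{\Sigma_\nu}\cap\{|z|\ge 1\}$. Then $|r(z)|^n \le q_2^{n-1}\cdot M/|z|$, which is \eqref{Ineq:Difference of exp and rational z>1} after replacing $c$ by $\min(c,-\log q_2)$.

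I expect the main obstacle to be the upgrade from the non-strict stability bound $|r|\le 1$ on $\Sigma_\theta$ to a uniform strict bound $|r|\le q<1$ on compact subsets of $\overline{\Sigma_\nu}$, since this is precisely what turns $A(\theta)$-stability into the exponential decay $e^{-cn|z|}$ and $e^{-cn}$. The rest reduces to the telescoping identity and to using $r(\infty)=0$ to gain the extra factor of $|z|^{-1}$ at infinity.
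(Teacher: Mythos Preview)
Your proof is correct. The paper does not prove this lemma; it cites \cite[Lemmas 9.4 and 9.5]{thomee1984galerkin} for the result, so there is no in-paper argument to compare against. Your route---extracting a uniform bound $|r(z)|\le e^{-c|z|}$ on $\Sigma_\nu\cap\{|z|\le 1\}$ from the Taylor expansion near $0$ combined with strict inequality $|r|<1$ on compact subsets of $\Sigma_\theta$, then telescoping for \eqref{Ineq:Difference of exp and rational z<1} and using $r(\infty)=0$ for the $|z|^{-1}$ gain in \eqref{Ineq:Difference of exp and rational z>1}---is the standard argument and matches Thom\'ee's approach in spirit.

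Two small remarks. First, the principle you invoke for $|r|<1$ on $\Sigma_\theta$ is really just the maximum modulus principle (or open mapping theorem) applied to a non-constant bounded holomorphic function on a domain; Phragm\'en--Lindel\"of is not needed since $r(\infty)=0$ already gives boundedness. Second, to match the stated dependence $C=C(r,\theta)$ (independent of $\nu$), note that the poles of $r$ lie outside $\overline{\Sigma_\theta}$ (boundedness on $\Sigma_\theta$ forbids poles on the closure), so both the consistency constant on $\{|z|\le 1\}$ and the bound $|zr(z)|\le M$ on $\{|z|\ge 1\}$ can be taken uniformly over $\overline{\Sigma_\theta}$; together with $e^c\le e$, this shows your constants $C$ in \eqref{Ineq:Difference of exp and rational z<1} and \eqref{Ineq:Difference of exp and rational z>1} depend only on $r$ and $\theta$.
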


A key estimate needed in our proofs is an improvement of \eqref{eq:DefApproxScheme} and \eqref{eq:stab}:
\begin{lemma}\label{Lemma:Fractional powers of difference EE and rational}
 	Let $R_{\tau} = r(\tau A)$, where $r$ is either the exponential function $r(z) = e^{-z}$, or $r\colon \Sigma_{\theta}\to \C$ is a rational function which is consistent of order $\ell\ge1$ and $A(\theta)$-stable with $\theta \in ( \omega(A), \frac \pi 2]$, and $r(\infty) = 0$. Then, there is a constant $C>0$ that depends on $r,\theta$ and $A$ such that for every $\alpha \in [-1,1]$,
 \begin{alignat}{2} 
 \label{Ineq:Fractional powers of difference EE and rational}
 		\| A^\alpha (\exp{-\tau nA} -R_{\tau}^n)\|_{\L(X)} & \le C \frac{1}{ \tau^{\alpha}n^{\ell+\alpha}}, && \quad n \ge 1, \ \tau>0,
\\
\label{Ineq:Fractional powers of rational}
\| A^\alpha R_{\tau}^n\|_{\L(X)} &\le C \frac{1}{\tau^{\alpha}n^\alpha}, && \quad n \ge 1, \ \tau>0.
 \end{alignat}
 \end{lemma}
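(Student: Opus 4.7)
The proof splits naturally into two cases: the exponential Euler scheme $r(z)=e^{-z}$, and the rational case with $r(\infty)=0$. In the exponential Euler case \eqref{Ineq:Fractional powers of difference EE and rational} is trivial, since $R_\tau^n=e^{-n\tau A}$. For \eqref{Ineq:Fractional powers of rational} with $\alpha\in(0,1)$, an application of the $H^1$-calculus bound \eqref{ineq:H1 calculus bound} to $f(z)=z^\alpha e^{-z}\in H^1(\Sigma_\nu)$ yields $\|(n\tau A)^\alpha e^{-n\tau A}\|_{\L(X)}\le C$; the endpoints $\alpha\in\{0,\pm1\}$ follow from analyticity of the semigroup and the extended calculus.

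For the rational case the plan is to use the Dunford integral representation combined with the scalar estimates of Lemma \ref{lem:rational-est}. Choosing $\nu\in(\omega(A),\theta)$ with $\cos\nu\ge c$, the substitution $w=\tau z$ gives
\begin{align*}
A^\alpha R_\tau^n &= \frac{1}{2\pi i\,\tau^\alpha}\int_{\partial\Sigma_\nu} w^\alpha\,r(w)^n\,(w-\tau A)^{-1}\,dw,\\
A^\alpha\bigl(e^{-n\tau A}-R_\tau^n\bigr) &= \frac{1}{2\pi i\,\tau^\alpha}\int_{\partial\Sigma_\nu} w^\alpha\,\bigl(e^{-nw}-r(w)^n\bigr)\,(w-\tau A)^{-1}\,dw.
\end{align*}
Sectoriality gives $\|(w-\tau A)^{-1}\|_{\L(X)}\lesssim |w|^{-1}$ uniformly on $\partial\Sigma_\nu$, reducing the matter to scalar integrals in $|w|$.

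I would then split each integral at $|w|=1$. On $\{|w|\le 1\}$, the bounds \eqref{Ineq:growth rational z<1} and \eqref{Ineq:Difference of exp and rational z<1} together with the substitution $u=cns$ yield $\int_0^1 s^{\alpha-1}e^{-cns}\,ds\lesssim n^{-\alpha}$ and $n\int_0^1 s^{\alpha+\ell}e^{-cns}\,ds\lesssim n^{-\alpha-\ell}$, which are the desired rates. On $\{|w|\ge 1\}$, estimate \eqref{Ineq:Difference of exp and rational z>1} together with $|e^{-nw}|\le e^{-cn|w|}$ produce exponential decay in $n$, which dominates any polynomial rate. The only delicate point in this step is the endpoint $\alpha=1$ in the $R_\tau^n$ integral, which produces a logarithmic divergence at infinity; this is resolved by iterating once, writing $|r(w)|^n\le|r(w)|\cdot|r(w)|^{n-1}\lesssim|w|^{-2}e^{-c(n-1)}$.

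The main obstacle is giving meaning to \eqref{Ineq:Fractional powers of rational} for $\alpha<0$: the integrand $w^\alpha r(w)^n(w-\tau A)^{-1}$ is not integrable at the origin since $r(0)=1$, so the Dunford representation itself fails there. For the difference bound \eqref{Ineq:Fractional powers of difference EE and rational} this issue disappears, because consistency of order $\ell\ge 1$ forces $e^{-nw}-r(w)^n$ to vanish like $|w|^{\ell+1}$ near the origin, which compensates $w^\alpha$ for every $\alpha\in[-1,1]$. For the pure scheme bound with $\alpha<0$, I would decompose $A^\alpha R_\tau^n=A^\alpha(R_\tau^n-e^{-n\tau A})+A^\alpha e^{-n\tau A}$ and combine the difference estimate just established with the corresponding exponential Euler bound from Step~1.
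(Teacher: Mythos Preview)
Your approach is essentially the same as the paper's: both reduce to an $H^1$-calculus bound via the Dunford integral, split the contour at $|w|=1$, apply the scalar estimates of Lemma~\ref{lem:rational-est}, and deduce \eqref{Ineq:Fractional powers of rational} from \eqref{Ineq:Fractional powers of difference EE and rational} together with the semigroup bound $\sup_{t>0}\|(tA)^\alpha e^{-tA}\|<\infty$.

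The one genuine omission is the case $n=1$ at the endpoint $\alpha=1$. Your iteration $|r(w)|^n\le|r(w)|\cdot|r(w)|^{n-1}\lesssim|w|^{-2}e^{-c(n-1)}$ invokes \eqref{Ineq:Difference of exp and rational z>1} for $|r(w)|^{n-1}$, which is available only when $n-1\ge 1$; for $n=1$ you obtain merely $|r(w)|\lesssim|w|^{-1}$, and the resulting tail integral $\int_1^\infty s^{-1}\,ds$ diverges. The paper treats $n=1$ separately, proving $\|(\tau A)r(\tau A)\|_{\L(X)}\le C$ directly from the partial-fraction representation \eqref{Eq:Definition of r(tauA)}: since $r(\infty)=0$ there is no constant term, and each summand is controlled either by $\sup_{t>0}\|(tA)(1+tA)^{-1}\|<\infty$ or by the $H^1$-calculus applied to $z(a_j+z)^{-k}\in H^1(\Sigma_\nu)$ for $k\ge 2$.
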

The estimate \eqref{Ineq:Fractional powers of difference EE and rational} was proved in \cite[Theorem 3.1]{zbMATH01330873} for $\alpha=1$ and $\ell \ge 2$.
The estimate \eqref{Ineq:Fractional powers of difference EE and rational} for $\alpha\in [-1,0]$ is already known, and can be found in \cite[Theorem 5.1]{BattyGomilkoTomilov}. There it is even presented for general $\alpha\in [-\ell -1,0]$ and with the condition $r(\infty) = 0$ replaced by $|r(\infty)|<1$. However, one can always reduce to $r(\infty) = 0$ by a standard trick (see \cite[Theorem 4.4]{larsson1991finite}).

 \begin{proof}
 Note that \eqref{Ineq:Fractional powers of rational} is immediate from \eqref{Ineq:Fractional powers of difference EE and rational} and the estimate $\sup_{t>0} t^{\alpha}\|A^{\alpha} e^{-t A}\|\leq C$.
 
 	 Let $f_{n}(z) \coloneq z^\alpha \big ( e^{-n z} - r(z)^{n}\big )$. Note that by \eqref{ineq:H1 calculus bound}, in order to prove \eqref{Ineq:Fractional powers of difference EE and rational}, it suffices to establish the estimate $\|f_{n}\|_{H^1(\Sigma_\theta)}\le C \frac{1}{ n^{\ell+\alpha}}$. To this end, let $\nu \in (\omega(A),\theta)$. 	
	By 
	\eqref{Ineq:Difference of exp and rational z<1}, there are constants $C=C(r,\theta)$ and $c=c(r,\nu)$ such that for every $|z| \le 1$ in the sector $\Sigma_{\nu}$, 
 	$$ |\exp{-nz} -r( z)^{n}| \le C n | z|^{\ell+1} e^{-c n| z|}$$
 	and consequently,
 	\begin{align*}
 		|f_{n}(z)| \le C n|z|^{\ell+1+\alpha} \exp{-c n|z|} .
 	\end{align*}
 	Hence,
 	\begin{align}
 		\int_{0}^{1} |f_{n} (\rho e^{\pm \nu i})| \frac{d\rho}{\rho}
 		&\le
 		C n \int_{0}^{1} \rho^{\ell+\alpha}  \exp{-c n\rho} \, d\rho \nonumber
 		\\
 		&\le C n \int_{0}^{\infty} \rho^{\ell+\alpha}  \exp{-c n\rho} \, d\rho \nonumber
 		\\
 		& = C c^{-(\ell+1+\alpha)} \Big(\frac{1}{ n}\Big)^{\ell+\alpha}  \int_0^\infty s^{\ell+\alpha} \exp{-s} \, ds 
= \tilde C \frac{1}{ n^{\ell+\alpha}}. \label{Ineq:Negative fractional powers of difference EE and rational small z}
 	\end{align}
 	In the case where $\alpha<1$,
 	by \eqref{Ineq:Difference of exp and rational z>1} we have that for $|z| \ge 1$ in the sector $\Sigma_{\nu}$,
 	$$ |r( z)^{n} | \le C | z|^{-1} e^{-c n}.$$
 Therefore, by the triangle inequality, and noting that $|\exp{-z}| \le \exp{-c|z|}$ for $z\in \Sigma_{\nu}$,
 	\begin{align*}
 		|f_n(z)|\le C |z|^{\alpha} e^{-cn|z|} + C|z|^{-1+\alpha} e^{-cn}.
 	\end{align*}
 	Consequently,
 	\begin{align*}
 		\int_{1}^\infty |f_{n}(\rho \exp{\pm \nu i})| \frac{d\rho}{\rho}
 		&\le  C \int_{1}^\infty  \frac 1{\rho^{1-\alpha}} \exp{-c n\rho} \, d\rho + C e^{-cn} \int_{1}^\infty \frac{1}{\rho^{2-\alpha}}\,d\rho
 		\\
 		& \le  C \int_{1}^\infty  \exp{-c n\rho} \, d\rho + C (1-\alpha)^{-1} e^{-cn}
 \leq \tilde C \frac{1}{n^{\ell+\alpha}}.
 	\end{align*}
 This shows that
 $$\|f_{n}\|_{H^1(\Sigma_\theta)} \coloneq \sup_{\nu\le \theta} \int_0^\infty |f_{n}(\rho \exp{\pm \nu i})| \frac{d\rho}{\rho} \le \tilde C \frac{1}{ n^{\ell+\alpha}}.$$

 Suppose now that $\alpha=1$. Note that $r(\infty)=0$ and, by the maximum modulus principle,  $|r(z)|<1$ on the interior of $\Sigma_\theta$. Hence, there are constants $c=c(r,\nu)$ and $C=C(r,\nu)$ such that  $|r(z)|^2 \le C |z|^{-2}$ and $|r(z)|\le e^{-c}$  for $|z|\ge 1$ in the sector $\Sigma_\nu$.  Therefore,  for $n\ge 2$ and $z \in \Sigma_\nu$ with $|z|\ge1$, 
 	$$ |r(z)|^n \le C|z|^{-2} e^{-c(n-2)}.$$
 	This implies that 
 	\begin{align*}
 		|f_{n}(z)| \le |z| \exp{-c n|z|} + C|z|^{-1}  e^{-c(n-2)} 
 	\end{align*}
 	and consequently
 	\begin{align*}
 		\int_{1}^\infty |f_{n}(\rho \exp{\pm \nu i})| \frac{d\rho}{\rho}
 		&\le  \int_{1}^\infty  \exp{-c n\rho} \, d\rho + C e^{-c(n-2)} \int_{1}^\infty \frac{1}{\rho^{2}}\,d\rho
 		 \le \tilde C \frac{1}{n^{\ell+1}}.
 	\end{align*}
 	This, together with \eqref{Ineq:Negative fractional powers of difference EE and rational small z}, show that for $n \ge 2$,
 	$$\|f_{n}\|_{H^1(\Sigma_\theta)} \coloneq \sup_{\nu\le \theta} \int_0^\infty |f_{n}(\rho \exp{\pm \nu i})| \frac{d\rho}{\rho} \le C \frac{1}{ n^{\ell+1}}.$$
 	For the case $n=1$, by the triangle inequality and since $\sup_{t>0}\|(t A) e^{-t A}\| < \infty$, it suffices to show that
 	$\|(\tau A) r(\tau A)\| \le C$ uniformly in $\tau$.
 	By \eqref{Eq:Definition of r(tauA)} and noting that $r(\infty)=0$ we estimate
 	\begin{align*}
 		\|(\tau A) r(\tau A)\| &= \Big \| \sum_{j=1}^{J}\gamma_{j,1} (\tau A) (a_j+\tau A)^{-1} + \sum_{k =2}^{K} \sum_{j= 1}^{J} \gamma_{j,k} (\tau A) (a_j+\tau A)^{-k} \Big\|
 		\\
 		&\le  \sum_{j =1}^{J} |\gamma_{j,1}| \, \|(\tfrac{\tau}{a_j} A)(1+\tfrac{\tau}{a_j} A)^{-1}\| + \sum_{k=2}^K \sum_{j=1}^J |\gamma_{j,k}|\, \|(\tau A)(a_j+\tau A)^{-k}\|
 		\le C
 	\end{align*} 	
 	by noting that $\sup_{t>0} \|(t A)(1+t A)^{-1}\| \le C$  and by applying \eqref{ineq:H1 calculus bound} for $ f_{k,j}(z)= z/(a_j+z)^k $ which belongs to $H^1(\Sigma_\nu)$ for $k \ge 2, j \ge 1$.
 \end{proof}

\subsection{Stochastic integration}
In principle, there is a full analogue of stochastic integration theory in infinite dimensions. However, geometric conditions on the underlying spaces are required. In order to give a satisfactory explanation of stochastic integration in a Banach space setting, we need $\gamma$-radonifying operators $\gamma(H,X)$ where $H$ is a Hilbert space and $X$ a Banach space. For details we refer to \cite[Chapter 9]{hytonen2017analysis}. Specializing to Hilbert spaces $X$, this class of operators reduces to the Hilbert-Schmidt operators $\calL_2(H,X)$. Moreover, in the important case $X = L^q(\mathcal{O})$, one can identify $\gamma(H,X)$ with $L^q(\mathcal{O};H)$.

\newcommand{\Borel}{\mathcal{B}}

Let $(\Omega,\F, \P)$ denote a probability space with filtration $\mathbb{F}=(\F_t)_{t\geq 0}$.
\begin{definition}
\label{def:Cylindrical_BM}
Let $H$ be a Hilbert space. A bounded linear operator $W\colon L^2(\R_+;H)\rightarrow L^2(\Omega)$ is said to be a {\em cylindrical Brownian motion} in $H$ if the following are satisfied:
\begin{itemize}
\item for all $f\in L^2(\R_+;H)$ the random variable $W(f)$ is centered Gaussian;
\item for all $t\in \R_+$ and $f\in L^2(\R_+;H)$ with support in $[0,t]$, $W(f)$ is $\F_t$-measurable;
\item for all $t\in \R_+$ and $f\in L^2(\R_+;H)$ with support in $[t,\infty]$, $W(f)$ is independent of $\F_t$;
\item for all $f_1,f_2\in L^2(\R_+;H)$ we have $\E(W(f_1)W(f_2))=(f_1,f_2)_{L^2(\R_+;H)}$.
\end{itemize}
\end{definition}
Given $W$, the process $t\mapsto W(\1_{(0,t]} h)$ is a Brownian motion for each $h\in H$.

The way to think about $W$ is that it is given by $t\mapsto \sum_{n\geq 1} W^n(t) h_n$, where
$(W^n)_{n\geq 1}$ are independent standard $\mathbb{F}$-Brownian motions, and $(h_n)_{n\geq 1}$ an orthonormal basis for $H$. However, since the above series does not define an $H$-valued random variable, the definition is given in a weaker sense.

However, the following convergence property does hold: if $S\in \gamma(H,X)$, then
\begin{align}\label{eq:convSWH}
S W(t) \coloneq \sum_{k\geq 1} W(t) h_k S h_k,
\end{align} 
 where the convergence takes place in $L^p(\Omega;X)$ for all $p\in [1, \infty)$.

A process $g \colon \R_+\times\Omega \to \calL(H,X)$ is called {\em $H$-strongly progressively measurable} if for all $t\in [0,T]$, $g|_{[0,t]}$ is strongly $\Borel([0,t])\otimes \F_t$-measurable (where $\Borel$ denotes the Borel $\sigma$-algebra).

For $0\leq a<b\leq T$ and a strongly $\F_a$-measurable $\xi\colon \Omega\to \gamma(H,X)$, the stochastic integral of $\1_{(a,b]} \xi$ is defined by
\begin{equation}
\int_0^{t} \1_{(a,b]} \xi d W\coloneq  \xi (W(b\wedge t)-W(a\wedge t)),
\end{equation}
where the series can be shown to be convergent as in \eqref{eq:convSWH} by the independence of $\F_a$ and $W(b\wedge t)-W(a\wedge t)$.

The space $L^p_{\mathbb{F}}((0,T)\times\Omega;\gamma(H,X))$ denotes the subspace of $L^p((0,T)\times\Omega;\gamma(H,X))$ consisting of all strongly progressively measurable processes. It can be shown that this coincides with the closure of the adapted step processes of finite rank (see \cite[Proposition 2.10]{NVW1}). Moreover, let $L^p_{\mathbb{F}}(\Omega;\ell^p_\tau(Z))$ denote the subspace of adapted elements in $L^p(\Omega;\ell^p_\tau(Z))$.

The following proposition provides a simple sufficient condition for stochastic integrability. For the definition of type $2$, see \cite[Chapter 7]{hytonen2017analysis}. Spaces of type $2$ include $L^q$, $W^{s,q}$, and $H^{s,q}$ etc.\ for $q\in [2, \infty)$ and $s\in \R$. The definition of UMD can be found in \cite[Chapter 4]{Analysis1}. Spaces with UMD include $L^q$, $W^{s,q}$, and $H^{s,q}$ with $q\in (1, \infty)$ and $s\in \R$.

\begin{proposition}[One-sided Burkholder-Davis--Gundy inequality]\label{prop:BDGtype2}
Let $X$ be a UMD Banach space with type $2$. Then for every $p\in [0,\infty)$, the mapping $g\mapsto \int_0^\cdot g\,\dd W$ extends to a continuous linear operator from $L^p_{\mathbb{F}}(\Omega;L^2(\R_+;\gamma(H,X)))$ into $L^p(\Omega;C_b([0,\infty);X))$. Moreover, for $p\in(0,\infty)$ there exists a constant $C_{p,X}$ such that for all $g\in L^p_{\mathbb{F}}(\Omega;L^2(\R_+;\gamma(H,X)))$, the following estimate holds
\begin{equation*}
\textstyle \E\sup_{t\geq 0}\Big\|\int_0^t g(s)\,\dd W(s)\Big\|_{X}^p \leq C_{p,X}^p \E\|g\|_{L^2(\R_+;\gamma(H,X))}^p.
\end{equation*}
\end{proposition}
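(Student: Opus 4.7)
The plan is to establish the maximal estimate first on a dense subclass of simple predictable integrands and then extend it by approximation. Concretely, I would take $g$ of the form $g = \sum_{n=1}^N \one_{(a_n, b_n]} \xi_n$ with $\xi_n$ strongly $\F_{a_n}$-measurable and of finite rank in $\gamma(H,X)$, for which the stochastic integral is defined pointwise as in the excerpt, $M(t) = \sum_n \xi_n(W(b_n \wedge t) - W(a_n \wedge t))$, producing a continuous $X$-valued $\mathbb{F}$-martingale whose paths are bounded a.s. Since these finite-rank step processes are dense in $L^p_{\mathbb F}((0,T)\times\Omega; \gamma(H,X))$ for every $p \in [1,\infty)$ by \cite[Proposition 2.10]{NVW1}, it suffices to prove the inequality on this class.

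For such $g$ and $p \in (1, \infty)$, I would combine two ingredients. First, the two-sided UMD Burkholder-Davis-Gundy inequality (proved by decoupling; see \cite{NVW1} and \cite[Chapter 17]{hytonen2017analysis}): for any UMD Banach space $X$,
\[
\E \sup_{t \geq 0} \|M(t)\|_X^p \simeq_{p, X} \E \|g\|_{\gamma(L^2(\R_+; H), X)}^p.
\]
Second, the type-2 embedding of $\gamma$-spaces (see \cite[Chapter 9]{hytonen2017analysis}): since $X$ has type $2$,
\[
\|g\|_{\gamma(L^2(\R_+; H), X)} \lesssim_X \|g\|_{L^2(\R_+; \gamma(H, X))}.
\]
Chaining these two bounds yields the one-sided maximal estimate on the dense subclass for $p \in (1,\infty)$. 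A standard Cauchy-sequence argument then extends both the definition of the stochastic integral and the maximal estimate to all $g \in L^p_{\mathbb F}(\Omega; L^2(\R_+; \gamma(H,X)))$, with path continuity preserved in the limit since uniform-in-$t$ $L^p$-convergence upgrades a.s.\ continuity along a subsequence.

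To descend to $p \in (0, 1]$ I would apply Lenglart's domination inequality to transfer the bound from $p = 1$ to smaller exponents, and for $p = 0$ I would combine the $p = 1$ estimate with a stopping-time truncation $\sigma_k \uparrow \infty$ to obtain continuity of the map into $L^0(\Omega; C_b([0,\infty); X))$ in probability; boundedness of paths on $[0,\infty)$ follows by letting $t \to \infty$ in the maximal bound. The genuine analytic content is packed into the two-sided UMD BDG inequality, which is a deep result relying on the decoupling property of UMD spaces together with the calculus of $\gamma$-radonifying operators; in this preliminaries section it is quoted as a known theorem, so the main obstacle reduces to assembling the ingredients correctly with the proper quantifiers on $p$ and approximating carefully so as to preserve path continuity and adaptedness.
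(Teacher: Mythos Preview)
The paper does not give its own proof of this proposition; it is stated as a known preliminary result, with the surrounding text pointing the reader to \cite{NVW1} and to the monographs \cite{Analysis1,hytonen2017analysis} for the underlying theory of $\gamma$-radonifying operators and stochastic integration in UMD spaces. Your sketch is precisely the standard argument one finds there: the two-sided BDG inequality in UMD spaces identifies the square function as a $\gamma$-norm, and the type~$2$ embedding $L^2(\R_+;\gamma(H,X))\hookrightarrow \gamma(L^2(\R_+;H),X)$ yields the one-sided estimate; density of finite-rank adapted step processes and Lenglart's inequality then handle the extension and the small-$p$ range. One minor slip: you only have the estimate for $p\in(1,\infty)$ when you invoke Lenglart, not for $p=1$, but Lenglart transfers from any fixed $p_0>1$ down to all $p\in(0,p_0)$, so this is harmless.
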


The above result is sharp for Hilbert spaces $X$. However, if $X$ is not a Hilbert space, a more precise bound is available, which, though explicitly used only in Section \ref{sec:Rbdd}, forms the crucial basis for the proofs of Theorem \ref{thm:maxest} and Corollary \ref{cor:DSMRLq}. For simplicity, we only formulate the result for $X = L^q(\mathcal{O})$. For details the reader is referred to \cite{NVW1}.
\begin{proposition}[Two-sided Burkholder-Davis--Gundy inequality]\label{prop:BDGUMD}
Let $X = L^q(\mathcal{O})$ with $q\in (1, \infty)$. Then for every $p\in [0,\infty)$, the mapping $g\mapsto \int_0^\cdot g\,\dd W$ extends to a continuous linear operator from $L^p_{\mathbb{F}}(\Omega;L^q(\mathcal{O};L^2(\R_+;H))$ into $L^p(\Omega;C_b([0,\infty);L^q(\mathcal{O}))$. Moreover, for $p\in(0,\infty)$  and for all $g\in L^p_{\mathbb{F}}(\Omega;L^q(\mathcal{O};L^2(\R_+;H)))$, the following two-sided estimate holds
\begin{equation*}
\E\sup_{t\geq 0}\Big\|\int_0^t g(s)\,\dd W(s)\Big\|_{L^q}^p \eqsim_{p,q} \E\|g\|_{L^q(\mathcal{O};L^2(\R_+;H))}^p.
\end{equation*}
\end{proposition}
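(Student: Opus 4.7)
The plan is to reduce the two-sided $L^q$-valued Burkholder--Davis--Gundy inequality to the classical scalar BDG inequality applied pointwise in $\mathcal{O}$, together with the Kahane--Khintchine identification of $\gamma$-radonifying and square-function norms on $L^q$-spaces. First I would prove the estimate for adapted elementary processes $g$ of finite rank. For such $g$, the process $M_t(x) \coloneq \int_0^t g(s)(x)\,dW(s)$ is, for $\mu$-a.e. $x \in \mathcal{O}$, a continuous real-valued martingale whose quadratic variation equals $[M(x)]_t = \int_0^t \|g(s)(x)\|_H^2\,ds$. The classical scalar BDG then yields, for every $p \in (0,\infty)$,
\begin{equation*}
\E\sup_{t\ge0}|M_t(x)|^p \eqsim_p \E\Big(\int_0^\infty \|g(s)(x)\|_H^2\,ds\Big)^{p/2}.
\end{equation*}
The passage from elementary processes to the whole space $L^p_{\mathbb F}(\Omega;L^q(\mathcal{O};L^2(\R_+;H)))$ is then obtained by density once the two-sided norm estimate has been established (cf.\ \cite[Proposition 2.10]{NVW1}).

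In the diagonal case $p=q$, Fubini immediately commutes the $L^p(\Omega)$-norm with the $L^q(\mathcal{O})$-norm. Integrating the pointwise scalar BDG over $\mathcal{O}$ gives the two-sided bound for $\E\|M_\infty\|_{L^q}^q$, and Doob's maximal inequality (valid for $L^q$-valued martingales since the norm is uniformly convex) upgrades the upper bound to control of $\E\sup_t \|M_t\|_{L^q}^q$; the lower bound for the supremum is of course trivial. The main obstacle is the case $p \ne q$: here the orders of $L^p(\Omega)$ and $L^q(\mathcal{O})$ cannot be interchanged by Fubini, and only one-sided bounds are accessible through Minkowski's integral inequality (the direction depending on whether $p \ge q$ or $p \le q$).

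To handle $p \neq q$, the strategy is to invoke the UMD-valued BDG inequality of \cite{NVW1}, which asserts that for every UMD space $X$ and every $p \in (0,\infty)$,
\begin{equation*}
\E \sup_{t \ge 0} \Big\| \int_0^t g \, dW \Big\|_X^p \eqsim_{p,X} \E\|g\|^p_{\gamma(L^2(\R_+;H),X)},
\end{equation*}
whose proof is based on randomized decoupling (Garling, McConnell). Specializing to $X = L^q(\mathcal{O})$, the square-function characterization of $\gamma$-radonifying operators on $L^q$-spaces (a consequence of the Kahane--Khintchine and Khintchine--Maurey inequalities) yields the isomorphism
\begin{equation*}
\gamma(L^2(\R_+;H),\, L^q(\mathcal{O})) = L^q(\mathcal{O};\, L^2(\R_+;H))
\end{equation*}
with equivalent norms. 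Substituting this identification into the right-hand side of the UMD-BDG inequality produces the desired two-sided estimate. The continuous-linear extension assertion in the first part of the proposition is a direct consequence of the two-sided bound and the density of elementary adapted processes.
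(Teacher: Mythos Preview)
Your proposal is correct and follows exactly the route the paper intends: the paper does not give its own proof but simply refers the reader to \cite{NVW1}, whose argument is precisely the UMD-valued BDG inequality combined with the identification $\gamma(L^2(\R_+;H),L^q(\mathcal{O})) = L^q(\mathcal{O};L^2(\R_+;H))$ that you describe. Your discussion of the diagonal case $p=q$ is an unnecessary detour (the UMD-BDG route handles all $p\in(0,\infty)$ uniformly), but it is not wrong.
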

Observe that for $q\in [2, \infty)$, by identifying $\gamma(H,L^q(\O))=L^q(\mathcal{O};H)$, and by Minkowski's inequality,
$$L^2(\R_+;\gamma(H,L^q(\O))) = L^2(\R_+;L^q(\mathcal{O};H)) \hookrightarrow L^q(\mathcal{O};L^2(\R_+;H)),$$
which explains why Proposition \ref{prop:BDGUMD} gives a sharper estimate, which is even two-sided.

\section{Discrete stochastic maximal regularity}\label{sec:DSMR}
In this section, we introduce discrete stochastic maximal regularity for arbitrary schemes. A definition of the continuous variant is more standard and can be found in Definition \ref{def:contSMR}. The main result of this section shows that under mild conditions on the scheme, discrete stochastic maximal $\ell^p$-regularity and stochastic maximal $L^p$-regularity are equivalent. Since stochastic maximal $L^p$-regularity is known to hold for a large class of operators $A$, we immediately obtain a wide range of examples for the discrete setting.

From now on, we fix a probability space $(\Omega,\F, \P)$ with filtration $\mathbb{F} = (\F_{t})_{t\geq 0}$, and a cylindrical Brownian motion $W$ with respect to $\mathbb{F}$ on $H$.
Moreover, we suppose that $X_0$ and $X_1$ are UMD spaces with type $2$ such that $X_1\hookrightarrow X_0$ densely. Let $X_{\alpha} \coloneq [X_{0}, X_1]_{\alpha}$ for $\alpha\in (0,1)$ denote the complex interpolation spaces.

In the rest of this section we suppose that Assumption \ref{assum:mainDSMR} is satisfied. 
By the assumptions on $A$, one sees that $-A$ generates a bounded analytic strongly continuous semigroup, which will be denoted by $(e^{-tA})_{t\geq 0}$. Moreover, the fractional powers $A^{\alpha}$ for $\alpha>0$ are well-defined.  

Let $T \in (0,+\infty]$. We say that $\tau>0$ is {\em admissible} if $T/\tau\in \N$ or $T =\infty$. For an admissible $\tau$ let $\pi_\tau \coloneq \{t_n = n\tau \colon 0 \le n \le N \}$ be a uniform partition of $[0,T]$, where $N = T/\tau$ if $T<\infty$, and $N=\infty$ if $T = \infty$.
	For $g \in L^p_{\mathbb{F}}(\Omega;L^p (0,T;\gamma(H,X_{1/2})))$, we define $Y=(Y_n)_{n=0}^N$ recursively by
	\begin{equation} \label{Eq:Definition of approximation scheme}
	\begin{cases}
		Y_{n+1} \coloneq  R_\tau Y_{n} + R_\tau \Delta_n I_g, \quad n=0,\dots, N-1
		\\
		Y_0\coloneq 0.
	\end{cases}
	\end{equation}	
Here we set $I_g \coloneq  \int_0^\cdot g(s) d W(s)$ and
\[\Delta_n I_g \coloneq  I_g(t_{n+1}) - I_g(t_{n}) = \int_{t_{n}}^{t_{n+1}} g(s) d W(s).\]
Alternatively, we can write $Y_n$ as a discrete stochastic convolution
\begin{equation} \label{Eq: Uniform step discrete stochastic convolution}
	Y_n = \sum_{j=0}^{n-1} R_\tau^{n-j} \Delta_j I_g , \quad n=1,\dots, N.
\end{equation}

\subsection{Definition and basic properties}

The following definition is central in this section and describes a certain regularity property of $Y=(Y_n)_{n=0}^N$ as defined in \eqref{Eq:Definition of approximation scheme}.
\begin{defn}[Discrete Stochastic Maximal Regularity]\label{def:DSMR}
Suppose that Assumption \ref{assum:mainDSMR} holds.
Let $T\in(0,+\infty]$ and $p\in[2,+\infty)$. The scheme $R$ is said to have \textit{discrete stochastic $\ell^p$-maximal regularity} on $(0,T)$ if there is a constant $C>0$ such that, for every admissible $\tau>0$, every uniform partition $\pi_\tau \coloneq \{t_n=n \tau \colon 0\le n \le N\}$ of $[0,T]$, for every $g \in L^p_{\mathbb{F}} (\Omega ; L^p (0,T;\gamma(H,X_{1/2})))$
the approximation scheme $Y=(Y_n)_{n=0}^N$ given in \eqref{Eq:Definition of approximation scheme} belongs to $X_1$ a.s.\ and satisfies
	\begin{equation} \label{Ineq:DSMR definition}
		\| A Y \|_{L^p(\Omega;\ell^p_\tau(X_{0}))}  \le C \|g\|_{L^p(\Omega;L^p (0,T;\gamma(H,X_{1/2})))}.
	\end{equation}
	The least admissible constant $C$ is denoted by $C_{\DSMR(p,T)}^R$. In case the above holds, we will write $R\in \DSMR(p,T)$.
\end{defn}
The constant $C$ is allowed to depend on $T$, but should be uniform in the stepsize $\tau$. Note that by \eqref{Eq: Uniform step discrete stochastic convolution} and since $Y_0=0$, \eqref{Ineq:DSMR definition} is equivalent to
$$ \Big( \E \sum_{n=1}^{N-1} \tau \Big \| A \sum_{j=0}^{n-1} R_\tau^{n-j} \, \Delta_j I_g \Big \|^p_{X_0} \Big)^{1/p} \le C \big(\E\|g\|_{L^p (0,T;\gamma(H,X_{1/2}))}^p\big)^{1/p}.$$

Some further properties of operators with discrete stochastic $\ell^p$-maximal regularity are collected in Section \ref{sec:perm}.

\begin{remark} 
\begin{enumerate}[(1)]
  \item 
It is always true that $Y_n\in X_1$ a.s.\ since we assume $r(\infty) = 0$ (see \eqref{Eq:Definition of r(tauA)}). If $r(\infty)\neq 0$ one does not have $Y_n\in X_1$ in general unless $A$ is bounded. 

\item One can replace the homogeneous norm $\| A Y \|_{L^p(\Omega;\ell^p_\tau(X_{0}))}$ in \eqref{Ineq:DSMR definition} with the inhomogeneous one $\| Y \|_{L^p(\Omega;\ell^p_\tau(X_{1}))}$. This leads to the following inhomogeneous version of discrete stochastic $\ell^p$-maximal regularity:
\begin{equation} \label{Ineq:DSMR inhomogeneous}
		\| Y \|_{L^p(\Omega;\ell^p_\tau(X_{1}))}  \le C \|g\|_{L^p(\Omega;L^p (0,T;\gamma(H,X_{1/2})))}.
	\end{equation}
Clearly \eqref{Ineq:DSMR inhomogeneous} implies \eqref{Ineq:DSMR definition} and the two are equivalent if $0 \in \rho(A)$. Furthermore, when restricted to finite time intervals $T<\infty$ one need not assume that $0 \in \rho(A)$. Indeed, by Proposition \ref{prop:BDGtype2} and the stability result \eqref{eq:stab} we immediately obtain that
\begin{align*}
\|Y\|_{L^p(\Omega;\ell^p_\tau(X_{1/2}))} \lesssim_{p,X_0,R} T^{\frac1p} \|g\|_{L^2(0,T;\gamma(H,X_{1/2}))} \leq T^{\frac12} \|g\|_{L^p(0,T;\gamma(H,X_{1/2}))}.
\end{align*}
The same estimate holds if $X_{1/2}$ is replaced by $X_{\alpha}$ (for both $Y$ and $g$) for any $\alpha\in [0,1]$.
\end{enumerate}
\end{remark}

\begin{remark}\label{rem:BIPzero}
In case $A$ has bounded imaginary powers (BIP) and $0\in \rho(A)$, there are several equivalent ways to formulate discrete stochastic $\ell^p$-maximal regularity, and the same applies to the continuous case, which will be defined below in Definition \ref{def:contSMR}. In particular, BIP holds if $A$ has a bounded $H^\infty$-calculus. 

Under the assumption that $A$ has BIP, $A^{1/2}$ defines an isomorphism from $X_{1/2}$ to $X_{0}$, so that one could define 
discrete stochastic $\ell^p$-maximal regularity as 
\begin{equation*}
		\|A^{\frac12}Y \|_{L^p(\Omega;\ell^p_\tau(X_{0}))}  \le C \|g\|_{L^p(\Omega;L^p (0,T;\gamma(H,X_{0})))},
  \end{equation*}
   which is quite common. The advantage of \eqref{Ineq:DSMR definition} is that it aligns well with its deterministic variant, in which one has regularization from $X_0$ into $X_1$.
\end{remark}

\begin{remark}
In the special case where $g \coloneq \sum_{n=0}^{N-1} g_n \1_{[t_n,t_{n+1})}$ with $g_n \in L^p(\Omega,\F_{t_n},\P; \gamma(H,X_{1/2}))$, one can write
\[\Delta_n I_g = g_{n} \, \Delta W_{n},\]
where $\Delta_n W \coloneq W(t_{n+1}) - W(t_{n})$ and we used \eqref{eq:convSWH}. This case is a popular choice for the discrete setting as well. Note that in this situation \[\|g\|_{L^p (\Omega ; L^p (0,T;\gamma(H,X_{1/2})))} = \|(g_n)_{n\geq 0}\|_{L^p(\Omega;\ell^p_\tau( \gamma(H,X_{1/2})))}.\]
\end{remark}

\subsection{Equivalence of continuous and discrete $\SMR$}

The following definition is a special case of the definition of stochastic maximal regularity in \cite{agresti2025nonlinear}.
\begin{definition}[Continuous Stochastic Maximal Regularity]\label{def:contSMR}
Suppose that Assumption \ref{assum:mainDSMR}\eqref{it:mainDSMR1} holds.
Let $T\in(0,+\infty]$ and $p\in[2,+\infty)$. The operator $A$ is said to have \textit{stochastic $L^p$-maximal regularity} on $(0,T)$ if there is a constant $C>0$ such that, for every $g \in L^p_{\mathbb{F}} (\Omega ; L^p (0,T;\gamma(H,X_{1/2})))$, the mild solution
\begin{align}\label{eq:mildsol}
y(t) \coloneq \int_0^t e^{-(t-s)A} g(s) d W(s), \quad t\in(0,T),
\end{align}
belongs to $X_1$ a.s.\ and satisfies
	\begin{equation} \label{Ineq: SMR definition}
		\| A y \|_{L^p(\Omega;L^p(0,T;X_0))}  \le C \|g\|_{L^p(\Omega;L^p (0,T;\gamma(H,X_{1/2})))}.
	\end{equation}
	The least admissible constant $C$ is denoted by $C_{\SMR(p,T)}^A$. In case the above holds, we will write $A\in \SMR(p,T)$.
\end{definition}
Some basic properties of stochastic $L^p$-maximal regularity are collected in Proposition \ref{prop:continuoustime}.

The main result of this section is the following.
\begin{theorem}\label{thm:mainequiv}
Suppose that Assumption \ref{assum:mainDSMR} holds. Let $T\in(0,+\infty]$ and $p \in [2,\infty)$.
Then the following are equivalent:
\begin{enumerate}[(1)]
\item\label{it:mainequiv1} $A$ has stochastic maximal $L^p$-regularity on $(0,T)$;
\item\label{it:mainequiv2} $R$ has discrete stochastic maximal $\ell^p$-regularity on $(0,T)$.
\end{enumerate}
Moreover, $C_{\SMR(p,T)}^A \leq C_{\DSMR(p,T)}^R\leq K (C_{\SMR(p,T)}^A + 1)$,
where the constant $K$ depends on $p$, $X_0$, the function $r$, and on the sectoriality constant and angle of $A$.
\end{theorem}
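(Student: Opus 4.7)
My plan follows the three steps announced in the subsection headings. First, I will prove that SMR for $A$ implies DSMR for the exponential Euler (EE) scheme $R^{EE}_\tau=e^{-\tau A}$. Second, I will show that DSMR is essentially independent of the admissible rational scheme, i.e.\ DSMR for $R^{EE}$ is equivalent to DSMR for any $R$ satisfying Assumption \ref{assum:mainDSMR}; this is where the sharp error bounds of Lemma \ref{Lemma:Fractional powers of difference EE and rational} enter. Third, I will prove the reverse implication DSMR $\Rightarrow$ SMR by approximating continuous data with piecewise-constant adapted processes and passing to the limit $\tau\downarrow0$. The quantitative chain $C^A_{\SMR(p,T)}\le C^R_{\DSMR(p,T)}\le K(C^A_{\SMR(p,T)}+1)$ follows by tracking the constants in each step.

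\textbf{Step 1 (SMR implies DSMR for $R^{EE}$).} The key identity is
\[
Y^{EE}_n=y_{\check g}(t_n),\qquad \check g(s):=e^{-(s-t_j)A}g(s)\ \text{for}\ s\in(t_j,t_{j+1}],
\]
which is verified directly: for $s\in(t_j,t_{j+1}]$ the semigroup property gives $e^{-(t_n-s)A}\check g(s)=e^{-(t_n-t_j)A}g(s)$, and integrating against $dW$ and summing over $j$ recovers $Y^{EE}_n$. Since $(e^{-uA})_{u\ge0}$ is uniformly bounded on $X_0$ and $X_1$, complex interpolation yields the same bound on $X_{1/2}$, so $\|\check g\|_{L^p(\gamma(H,X_{1/2}))}\lesssim\|g\|_{L^p(\gamma(H,X_{1/2}))}$. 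Applying SMR to $\check g$ gives $\|Ay_{\check g}\|_{L^p(0,T;X_0)}\le C^A_{\SMR(p,T)}\|g\|$. It remains to prove the sampling estimate $\|(Ay_{\check g}(t_n))_n\|_{\ell^p_\tau(X_0)}\lesssim\|Ay_{\check g}\|_{L^p(L^p(X_0))}+\|g\|_{L^p}$. For $t\in(t_{n-1},t_n)$ use the mild solution identity
\[
Ay_{\check g}(t_n)=e^{-(t_n-t)A}Ay_{\check g}(t)+A\int_t^{t_n}e^{-(t_n-s)A}\check g(s)\,dW(s),
\]
take $L^p(\Omega;X_0)$-norms, integrate $t$ over $(t_{n-1},t_n)$, and sum over $n$. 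The first summand yields $\|Ay_{\check g}\|_{L^p(L^p(X_0))}$, while the short-time stochastic correction is bounded using BDG together with the interpolated estimate $\|Ae^{-uA}\|_{X_{1/2}\to X_0}\lesssim u^{-1/2}$.

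\textbf{Step 2 (EE $\Leftrightarrow R$).} Using the identity
\[
A(Y^R_n-Y^{EE}_n)=\sum_{j=0}^{n-1}A\big(R_\tau^{n-j}-e^{-(n-j)\tau A}\big)\Delta_j I_g,
\]
I bound the kernel by complex interpolation between the two endpoint bounds of Lemma \ref{Lemma:Fractional powers of difference EE and rational} (namely the $\alpha=1$ estimate on $X_0$ and the $\alpha=0$ estimate postcomposed with $A\colon X_1\to X_0$), giving
\[
\|A(R_\tau^n-e^{-n\tau A})\|_{X_{1/2}\to X_0}\lesssim \tau^{-1/2}n^{-\ell-1/2},\qquad n\ge1.
\]
Applying BDG (Proposition \ref{prop:BDGtype2}) and discrete Hölder to the convolution — the squared kernel $n^{-2\ell-1}$ is summable since $\ell\ge1$ — yields $\|A(Y^R-Y^{EE})\|_{L^p(\Omega;\ell^p_\tau(X_0))}\le K\|g\|_{L^p(\gamma(H,X_{1/2}))}$ with $K$ depending only on $p$, $X_0$, $r$, and the sectoriality data of $A$. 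Hence DSMR for $R^{EE}$ and for $R$ are equivalent with comparable constants, which combined with Step 1 proves $(1)\Rightarrow(2)$.

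\textbf{Step 3 (DSMR implies SMR) and main obstacle.} For the reverse direction, approximate an arbitrary $g\in L^p_{\mathbb{F}}(\Omega;L^p(0,T;\gamma(H,X_{1/2})))$ by adapted piecewise constant processes $g_\tau$ on the uniform grid of step $\tau$ with $g_\tau\to g$ in $L^p$. DSMR applied with input $g_\tau$ yields a uniform-in-$\tau$ bound on the piecewise-constant extension $\tilde Y^\tau$; since $R^{EE}$ coincides with the sampled semigroup, the identity of Step 1 together with standard continuity of the continuous mild solution in the data yields $\tilde Y^\tau\to y_g$ in the appropriate norm, whence Fatou (or a weak-convergence argument) gives the SMR bound $C^A_{\SMR(p,T)}\le C^R_{\DSMR(p,T)}$. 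The main obstacle is the sampling estimate in Step 1: a naive BDG bound for the short-time stochastic correction produces a logarithmic singularity, so one must exploit SMR itself on the short subinterval $(t_{n-1},t_n)$, or use a refined half-step splitting that trades a bit of the interpolation exponent to absorb the singularity, in order to close the argument with the correct constants.
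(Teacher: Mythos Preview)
Your three-step structure and Step 2 match the paper. Step 1, however, takes a genuinely different route: the paper (Theorem \ref{SMR implies EE DSMR}) compares $AY^{EE}_n$ directly with $Ay_g(t)$ for $t\in(t_n,t_{n+1})$ via the kernel difference $A(e^{-\tau(n-j)A}-e^{-(t-s)A})$, whereas you use the identity $Y^{EE}_n=y_{\check g}(t_n)$. Your approach is actually cleaner, and the ``main obstacle'' you worry about is illusory. For $s\in(t_{n-1},t_n)$ one has $\check g(s)=e^{-(s-t_{n-1})A}g(s)$, so the short-time integrand collapses:
\[
Ae^{-(t_n-s)A}\check g(s)=Ae^{-\tau A}g(s),
\]
which has no $s$-singularity. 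The bound $\|Ae^{-\tau A}\|_{\calL(X_{1/2},X_0)}\lesssim\tau^{-1/2}$, Proposition \ref{prop:BDGtype2}, H\"older on $(t_{n-1},t_n)$, and summation over $n$ already give $\lesssim\|g\|^p_{L^p(\gamma(H,X_{1/2}))}$. No half-step splitting or second appeal to SMR is needed.

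Step 3 is underdeveloped. Your weak-convergence/Fatou route can be made to work, but ``standard continuity of the mild solution in the data'' only gives $\tilde Y^\tau\to y_g$ in $L^p(\Omega\times(0,T);X_0)$ (via BDG, at least for $T<\infty$), not in $L^p(X_1)$; you then need closedness of $A$ to identify the weak $L^p(X_0)$-limit of $A\tilde Y^\tau$ as $Ay_g$, and you have not spelled out the strong $X_0$-convergence of $\tilde Y^\tau$ itself (this is where your Step 1 identity plus $\check g\to g$ in $L^p(\Omega;L^2(\gamma(H,X_0)))$ would enter). The paper avoids all of this by proving a quantitative strong convergence result (Proposition \ref{prop:Convergece of rational scheme regular G}) using only Lemma \ref{Lemma:Fractional powers of difference EE and rational} and BDG: for piecewise-constant $g$ with values in $X_1$, $\|A(y-Y^\tau)\|_{L^p}\lesssim\tau^{1/2}\|g\|_{L^p(\gamma(H,X_1))}$. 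Adding this to the DSMR bound and letting $\tau\to0$ gives $C^A_{\SMR(p,T)}\le C^R_{\DSMR(p,T)}$ on a dense class, hence everywhere.
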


The proof of the implication from $\SMR$ to $\DSMR$ will be given in Subsections \ref{SMR-EE-DSMR} and \ref{subs:equivDSMR} for the exponential Euler and rational schemes, respectively. The converse is simpler and will be proved in Subsection \ref{subs:DSMR-SMR}. 
See Figure \ref{fig:SMRiffDSMR} for a flowchart of the proof of Theorem \ref{thm:mainequiv}. Furthermore, in Theorem \ref{thm:weighted} we show that \eqref{it:mainequiv1} and \eqref{it:mainequiv2} are equivalent to their weighted counterparts.

\begin{figure}[H]
    \centering
    \caption{Flowchart of the proof of Theorem \ref{thm:mainequiv}.}
    \label{fig:SMRiffDSMR}
        \[\begin{tikzcd}
	{A\in \SMR(p,T)} && {\text{(EE) scheme} \in \DSMR(p,T)} \\
	& {\text{Rational scheme} \in \DSMR(p,T)}
	\arrow["{\text{Theorem \ref{SMR implies EE DSMR}}}", from=1-1, to=1-3]
	\arrow["{\text{Theorem \ref{EE DSMR implies r(z) DSMR Banach for r(infty)=0 shift}}}"', from=1-3, to=2-2]
        \arrow[from=2-2, to=1-3]
	\arrow["{\text{Theorem \ref{DSMR implies SMR shift}}}"', from=2-2, to=1-1]
\end{tikzcd}\]
\end{figure}
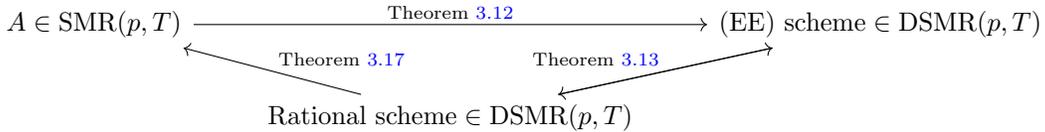

\begin{remark}[Quasi-uniform partitions] \label{remark:quasi uniform 1}
    Theorem \ref{thm:mainequiv} extends to quasi-uniform partitions $\pi$ of $[0,T]$, namely partitions that are given by variable time-steps $(\tau_n)_{n=1}^N$ for which there is a constant $\mu>0$ with
    $	 \mu^{-1} \le \frac{\tau_{\max}}{\tau_{\min}} \le \mu , $
    where $\tau_{\max}$ and $\tau_{\min}$ denote the maximum and minimum time-step, respectively. 
    Note that the constant $K$ of Theorem \ref{thm:mainequiv} will also depend on $\mu$. In this case, the approximation scheme  becomes 
    \begin{equation*} \label{Eq:Definition of approximation scheme quasi-uniform partition}
	\begin{cases}
		Y_{n+1} \coloneq  R_{\tau_{n+1}} Y_{n} + R_{\tau_{n+1}} \Delta_n I_g, \quad n=0,\dots, N-1
		\\
		Y_0\coloneq 0
	\end{cases}
	\end{equation*}	
    and the solution 
    $(Y_n)$ is given by 
    \begin{equation}\label{eq:approximation scheme quasi-uniform partition}
         Y_n = \sum_{j=0}^{n-1} R^{n,j}_\pi \Delta_j I_g , \quad n=1,\dots, N, 
    \end{equation}
    where $(R^{n,j})_{j \le n}$ denotes the discrete evolution family  given by  
$$ R^{n,j}_\pi  \coloneq \begin{cases}
	R_{\tau_n} \cdots R_{\tau_{j+1}}    &0\le j \le n-1
	\\
	I & j=n.
\end{cases} 
$$
We say that $R$ has discrete stochastic maximal $\ell^p$-regularity if for every $g \in L^p_{\mathbb{F}} (\Omega ; L^p (0,T;\gamma(H,X_{1/2})))$
the approximation scheme $Y=(Y_n)_{n=0}^N$ given in \eqref{eq:approximation scheme quasi-uniform partition}  belongs to $X_1$ a.s.\ and satisfies
$$ \Big( \E \sum_{n=1}^{N-1} \tau_{n+1} \Big \| A \sum_{j=0}^{n-1} R^{n,j}_\pi \, \Delta_j I_g \Big \|^p_{X_0} \Big)^{1/p} \le C \big(\E\|g\|_{L^p (0,T;\gamma(H,X_{1/2}))}^p\big)^{1/p}.$$
The constant $C$ does not depend on $g$ and $\pi$ but it is allowed to depend on $\mu$.
Since uniform partitions are quasi-uniform, Theorem \ref{DSMR implies SMR shift} proves the implication \eqref{it:mainequiv2}  $\implies$ \eqref{it:mainequiv1} of Theorem \ref{thm:mainequiv} for quasi-uniform partitions. In order to prove the converse implication, it suffices to check that Theorems \ref{SMR implies EE DSMR} and \ref{EE DSMR implies r(z) DSMR Banach for r(infty)=0 shift} extend to quasi-uniform partitions (see figure \ref{fig:SMRiffDSMR}).
Note that the proof of Theorem \ref{SMR implies EE DSMR} mainly relies on establishing \eqref{Eq:Must bound - SMR implies EE DSMR new Shifted} and the latter follows from estimating $\|\Phi(t,s,n,j)\|^2_{\calL(X_{1/2},X_0)} \lesssim [\tau(n-j-1)^2]^{-1}$
using standard sectoriality estimates of $A$. In the case of a quasi-uniform partition  $\Phi(t,s,n,j) = A (e^{-t_{n,j}A} - e^{-(t-s)A} )$, where $	t_{n,j}  \coloneq 	\tau_n  +\dots +\tau_{j+1}$,
and thus similar calculations give $\|\Phi(t,s,n,j)\|^2_{\calL(X_{1/2},X_0)}\lesssim  [\tau_{\max}(n-j-1)^2]^{-1}$. Therefore Theorem \ref{SMR implies EE DSMR} also holds for quasi-uniform partitions.  
Theorem \ref{EE DSMR implies r(z) DSMR Banach for r(infty)=0 shift} follows from estimate \eqref{Eq: Estimate for EE iff rational - shifted}, a consequence of Lemma \ref{Lemma:Fractional powers of difference EE and rational}. Since Lemmas \ref{lem:rational-est} and \ref{Lemma:Fractional powers of difference EE and rational} extend to  quasi-uniform partitions  (c.f.~\cite{Yan-variabletimesteps} and \cite{Saito-nonuniformpartitions}),  Theorem \ref{EE DSMR implies r(z) DSMR Banach for r(infty)=0 shift} also holds for quasi-uniform partitions.
Finally note that Proposition \ref{prop:Convergece of rational scheme regular G} also extends to quasi-uniform partitions.
\end{remark}

As a consequence, we obtain the following sufficient conditions for discrete stochastic maximal $\ell^p$-regularity. In Hilbert spaces, we always have discrete maximal $\ell^p$-regularity without any further conditions on $A$ besides sectoriality of angle $<\pi/2$.
\begin{corollary}[$\DSMR$ for free on Hilbert spaces]
Suppose that Assumption \ref{assum:mainDSMR} holds with $X_0$ a Hilbert space. Then $R$ has discrete maximal $\ell^p$-regularity on $(0,T)$ for all $p\in [2, \infty)$ and $T\in (0,\infty]$.
\end{corollary}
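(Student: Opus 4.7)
The plan is to reduce the discrete statement to its continuous counterpart by a direct application of the equivalence Theorem \ref{thm:mainequiv}, and then to invoke the recently established ``for free'' stochastic maximal $L^p$-regularity on Hilbert spaces.

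First, I would check that the hypotheses of Theorem \ref{thm:mainequiv} are in place: by Assumption \ref{assum:mainDSMR}\eqref{it:mainDSMR1}, $A$ is sectorial on $X_0$ of angle $\omega(A)<\pi/2$ with $D(A)=X_1$, and by Assumption \ref{assum:mainDSMR}\eqref{it:mainDSMR2} the scheme is $R_{\tau}=r(\tau A)$ with $r$ of the required admissible form. Since $X_0$ is Hilbert, it is in particular UMD with type $2$, so the standing assumptions of Section \ref{sec:DSMR} are satisfied as well. Theorem \ref{thm:mainequiv} then tells us that, for each pair $(p,T)$ with $p\in[2,\infty)$ and $T\in(0,\infty]$, the sought conclusion $R\in\DSMR(p,T)$ is equivalent to $A\in\SMR(p,T)$.

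Second, I would invoke the Hilbert space SMR result from \cite{agresti2025nonlinear} (already referenced in the introduction under ``Discrete Stochastic Maximal $\ell^p$-regularity for free''), which asserts that every sectorial operator on a Hilbert space with angle strictly less than $\pi/2$ satisfies $\SMR(p,T)$ for all $p\in[2,\infty)$ and all $T\in(0,\infty]$, with no further structural assumption on $A$. Combined with the reduction above, this gives $R\in\DSMR(p,T)$ in all the claimed cases, together with the quantitative bound
\[
C^{R}_{\DSMR(p,T)} \le K\bigl(C^{A}_{\SMR(p,T)}+1\bigr)
\]
furnished by Theorem \ref{thm:mainequiv}.

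There is no real obstacle to the argument itself: all the work is concentrated in the proof of Theorem \ref{thm:mainequiv} (carried out in Subsections \ref{SMR-EE-DSMR}, \ref{subs:equivDSMR} and \ref{subs:DSMR-SMR}), while the Hilbert space SMR statement is used as a black box. The only minor point worth emphasising is the uniformity down to $T=\infty$, which is already incorporated both in the Hilbert space result of \cite{agresti2025nonlinear} and in the formulation of Theorem \ref{thm:mainequiv}, so that the argument goes through for $T=\infty$ without modification.
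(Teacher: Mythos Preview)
Your proposal is correct and follows essentially the same route as the paper: reduce to the continuous setting via Theorem \ref{thm:mainequiv} and invoke the Hilbert space $\SMR$ result from \cite{agresti2025nonlinear}. The paper is only slightly more explicit in that it first secures $A\in\SMR(2,T)$ from \cite{agresti2025nonlinear} and then extrapolates to all $p\in[2,\infty)$ via \cite{LoVer}, whereas you cite the full-range result as a single black box; the substance is the same.
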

 \begin{proof}
 	 By the proof below \cite[Theorem 3.13]{agresti2025nonlinear}, $A$ has $\SMR(2,T)$. Therefore, arguing as in \cite[Theorem 8.2]{LoVer} (see also \cite[Theorem 3.12]{agresti2025nonlinear}), one obtains that $A$ has $\SMR(p,T)$ for any $p\in [2, \infty)$. The required result now follows from Theorem \ref{thm:mainequiv}.
 \end{proof}

In $L^q$-spaces, the result holds under the additional condition that $A$ has a bounded $H^\infty$-calculus of angle $<\pi/2$, which can be seen as a discrete analogue of the stochastic maximal regularity result of \cite{NVWSMR}.
\begin{corollary}[$\DSMR$ on $L^q$-spaces]\label{cor:DSMRLq}
Suppose that Assumption \ref{assum:mainDSMR} holds and $X_0$ is isomorphic to a closed subspace of $L^q(\mathcal{O})$ with $q\in (2, \infty)$. Suppose that $A$ has a bounded $H^\infty$-calculus of angle $<\pi/2$. Then $R$ has discrete maximal $\ell^p$-regularity on $(0,T)$ for all $p\in (2, \infty)$ and $T\in (0,\infty]$.
\end{corollary}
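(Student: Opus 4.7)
The plan is to combine Theorem \ref{thm:mainequiv} with the known stochastic maximal regularity theorem of \cite{NVWSMR} for operators with a bounded $H^\infty$-calculus on (closed subspaces of) $L^q$-spaces. Since Theorem \ref{thm:mainequiv} has already done the heavy lifting of translating between the continuous and discrete settings, the corollary reduces to invoking an existing continuous result.

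First, I would verify that the hypotheses of Theorem \ref{thm:mainequiv} are satisfied. Assumption \ref{assum:mainDSMR} is given, and the ambient space $X_0$, being isomorphic to a closed subspace of $L^q(\mathcal{O})$ with $q\in(2,\infty)$, is a UMD Banach space of type $2$ (closed subspaces inherit both properties), so the standing hypotheses at the start of Section \ref{sec:DSMR} are met. Second, I would apply the continuous stochastic maximal $L^p$-regularity result for operators with a bounded $H^\infty$-calculus of angle $<\pi/2$ on closed subspaces of $L^q$-spaces, as established in \cite{NVWSMR, vanneerven2014stochasticintegrationbanachspaces} and surveyed in \cite{agresti2025nonlinear}: this yields $A\in \SMR(p,T)$ for every $p\in(2,\infty)$ and every $T\in(0,\infty]$ (the case $p=2$ and the $q=2$ Hilbert case are the ones that require a separate argument, and these are handled in the preceding corollary). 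Third, I would invoke Theorem \ref{thm:mainequiv} directly to conclude that $R\in \DSMR(p,T)$ with the quantitative estimate $C_{\DSMR(p,T)}^R \leq K(C_{\SMR(p,T)}^A+1)$.

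The main obstacle, in principle, is not in the proof itself but in citing the correct version of the continuous $\SMR$ result: the cleanest statement covers $L^q$ with $q\in[2,\infty)$, and one needs to check that it passes to \emph{closed subspaces} of $L^q$. This is, however, immediate from the proof in \cite{NVWSMR}, since the two-sided Burkholder--Davis--Gundy inequality (Proposition \ref{prop:BDGUMD}) and the $H^\infty$-calculus estimates that drive the argument are stable under passing to closed subspaces (one simply views $g$ and the solution as taking values in the ambient $L^q$ and uses that $A$ acts on the closed subspace). No further work is required beyond stitching the above references together with Theorem \ref{thm:mainequiv}.
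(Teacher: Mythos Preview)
Your proposal is correct and matches the paper's own proof essentially verbatim: the paper simply cites \cite[Theorem 1.1]{NVWSMR} to obtain $A\in\SMR(p,T)$ and then applies Theorem \ref{thm:mainequiv}. Your additional remarks on checking the UMD/type~2 hypotheses and on passage to closed subspaces are reasonable elaborations, but the core argument is identical.
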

 \begin{proof}
 	By \cite[Theorem 1.1]{NVWSMR}, $A$ has $\SMR(p,T)$ and thus $R$ has $\DSMR(p,T)$ by Theorem \ref{thm:mainequiv}.
 \end{proof}

\begin{remark}
In the special case of the implicit Euler scheme $r(z) = (1+z)^{-1}$ and $0\in \rho(A)$, Corollary \ref{cor:DSMRLq} follows from the work of \cite{li-Xie-stability}. In the latter paper, the regularity is shifted by a regularity of $1/2$. However, due to BIP this leads to an equivalent definition as explained in Remark \ref{rem:BIPzero}. The proof in \cite{li-Xie-stability} can be seen as a discrete analogue of the technique in \cite{NVWSMR}. Due to Theorem \ref{thm:mainequiv}, one can directly apply the continuous-time setting and cover many more schemes at the same time.
\end{remark}

On real interpolation spaces, the only additional condition needed is invertibility of $A$.
\begin{corollary}[$\DSMR$ for free on real interpolation spaces]
Let $A$ be a sectorial operator of angle $<\pi/2$ on a space $E$ with UMD and type $2$, and suppose that $0\in \rho(A)$. Let $D_A(\alpha,q) \coloneq (E,D(A^n))_{\alpha/n,q}$ where $0<\alpha <n$ and $q\in [1, \infty]$. Let $X_i \coloneq D_A(\alpha+i,q), \ i\in \{0,1\},$ for some $\alpha>0$ and $q\in [2, \infty)$.
Suppose that Assumption \ref{assum:mainDSMR}\eqref{it:mainDSMR2} holds. Then $R$ has discrete maximal $\ell^p$-regularity on $(0,T)$ for all $p\in (2, \infty)$ and $T\in (0,\infty]$.
\end{corollary}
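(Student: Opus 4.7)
The plan is to reduce the claim to continuous stochastic maximal regularity via Theorem \ref{thm:mainequiv} and then invoke a known ``$\SMR$ for free'' result on real interpolation spaces. First, I would check that the hypotheses of Assumption \ref{assum:mainDSMR} are fully satisfied in this setting. Only part \eqref{it:mainDSMR2} is assumed directly, so I need to verify \eqref{it:mainDSMR1}. Denote by $A_{0}$ the part of $A$ in $X_{0}=D_{A}(\alpha,q)$, i.e.\ $D(A_{0})=\{x\in X_{0}\cap D(A):Ax\in X_{0}\}$. By the very definition of the $D_{A}$-scale, $D(A_{0})=D_{A}(\alpha+1,q)=X_{1}$. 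Standard interpolation theory shows that $A_{0}$ is sectorial on $X_{0}$ with $\omega(A_{0})\le\omega(A)<\pi/2$, that $0\in\rho(A_{0})$ since $0\in\rho(A)$, and that $X_{1}\hookrightarrow X_{0}$ densely for $q<\infty$. Moreover, both $X_{0}$ and $X_{1}$ inherit UMD and type $2$ from $E$ through real interpolation with $q\in[2,\infty)$. Thus, Assumption \ref{assum:mainDSMR}\eqref{it:mainDSMR1} holds for $A_{0}$ on $X_{0}$.

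Next, I would apply the continuous counterpart: the operator $A_{0}$ has $\SMR(p,T)$ on $X_{0}$ for every $p\in(2,\infty)$ and every $T\in(0,\infty]$. This is precisely the ``$\SMR$ for free on real interpolation spaces'' phenomenon established in \cite{agresti2025nonlinear,LoVer}: under the standing assumption that $E$ is UMD with type $2$ and $A$ is sectorial of angle $<\pi/2$ with $0\in\rho(A)$, the real interpolation scale $D_{A}(\alpha,q)$ automatically supports stochastic maximal $L^{p}$-regularity, because the trace-type characterization $\|x\|_{D_{A}(\alpha,q)}\simeq\bigl(\int_{0}^{\infty}\|t^{1-\alpha}Ae^{-tA}x\|_{E}^{q}\,\frac{dt}{t}\bigr)^{1/q}$ allows one to dualize the stochastic convolution estimate into an estimate on $E$ by means of the semigroup and Burkholder–Davis–Gundy on $E$.

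Having $A_{0}\in\SMR(p,T)$ on $X_{0}$, Theorem \ref{thm:mainequiv} applied to $A_{0}$ (which indeed falls under Assumption \ref{assum:mainDSMR} in view of the previous paragraph, combined with hypothesis \eqref{it:mainDSMR2} on $R$) immediately yields $R\in\DSMR(p,T)$ on $X_{0}$, finishing the proof.

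The only real subtlety I foresee is the identification of the complex interpolation space $X_{1/2}=[D_{A}(\alpha,q),D_{A}(\alpha+1,q)]_{1/2}$ used in the definition of $\gamma(H,X_{1/2})$. A clean closed form is not needed though: the $\SMR$ statement and the implication of Theorem \ref{thm:mainequiv} are both phrased intrinsically in terms of $X_{1/2}$, so one can work with it abstractly. If a concrete identification were required (e.g.\ with $D_{A}(\alpha+\tfrac12,q)$), it would come from reiteration together with BIP of the part $A_{0}$, which can be verified by an independent functional calculus argument, but this is not needed to conclude the corollary as stated.
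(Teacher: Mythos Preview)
Your overall strategy---verify Assumption \ref{assum:mainDSMR}\eqref{it:mainDSMR1}, establish $A\in\SMR(p,T)$ on $X_0$ via the real-interpolation result of \cite{LoVer}, then invoke Theorem \ref{thm:mainequiv}---is exactly the paper's. However, you underestimate the one genuine technical point: the identification of $X_{1/2}$ is \emph{not} optional. The $\SMR$ result you cite (\cite[Theorem 8.6]{LoVer}) is formulated with the fractional domain $D(A^{1/2})$, whereas Definition \ref{def:contSMR} in this paper uses the complex interpolation space $X_{1/2}=[X_0,X_1]_{1/2}$; the two a priori differ, so you cannot simply ``work with it abstractly'' to transfer the cited result into the present framework. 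The paper closes this gap precisely by invoking Dore's theorem to obtain a bounded $H^\infty$-calculus for $A$ on $D_A(\alpha,q)$, hence BIP, and then Lemma \ref{lem:BIP} together with complex reiteration yields $D(A^\theta)=[X_0,X_1]_\theta=D_A(\alpha+\theta,q)$ for all $\theta\in[0,1]$, so that the two definitions of $\SMR$ coincide. Your parenthetical remark that BIP ``can be verified by an independent functional calculus argument'' is the right ingredient, but it is required rather than optional, and Dore's theorem is the clean way to get it here.
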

\begin{proof}
 It is well-known that $A$ defines a sectorial operator of angle $<\pi/2$ on $X_0 = D_A(\alpha,q)$. Moreover, $A$ has a bounded $H^\infty$-calculus by Dore's theorem (see \cite[Corollary 16.3.22]{Analysis3}). Thus, from Lemma \ref{lem:BIP} and the complex reiteration theorem (see \cite[Section 1.10.3]{Tri95}) we obtain $D(A^{\theta}) = [D_A(\alpha,q), D_A(\alpha+1,q)]_{\theta} = D_A(\alpha+\theta,q)$ for all $\theta\in [0,1]$. Therefore, our definition of
 $\SMR(p,T)$ coincides with the one used in \cite[Theorem 8.6]{LoVer}, and hence $A$ has $\SMR(p,T)$. It remains to apply Theorem \ref{thm:mainequiv}.
\end{proof}

\subsection{$\SMR$ implies discrete $\SMR$ for exponential Euler}\label{SMR-EE-DSMR}

As a first step, we prove that the exponential Euler method has discrete stochastic maximal regularity by a perturbation argument from the continuous-time setting.
	\begin{theorem}\label{SMR implies EE DSMR}
		Suppose that Assumption \ref{assum:mainDSMR} holds. Let $T\in(0,\infty]$ and $p \in [2,\infty)$. If $A$ has stochastic maximal $L^p$-regularity on $(0,T)$, then the exponential Euler scheme $(EE)$ has discrete maximal $\ell^p$-regularity. Moreover,
		$$C^{(EE)}_{\DSMR(p,T)} \lesssim_{p,X,A} C^A_{\SMR(p,T)}+1,$$
where the implicit constant only depends on $p$, $X$, and the sectoriality constant and angle of $A$.
	\end{theorem}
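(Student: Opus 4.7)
The plan is to reduce the discrete scheme to a continuous mild solution with a reparametrized data, and then invoke the continuous $\SMR$ hypothesis. For the exponential Euler scheme $R_\tau = e^{-\tau A}$, define the transported integrand
\[
g_\tau(s) \coloneq e^{-(s - t_j)A} g(s), \qquad s \in [t_j, t_{j+1}),\; j \ge 0.
\]
Using the semigroup identity $e^{-(t_n - t_j)A} = e^{-(t_n - s)A} e^{-(s - t_j)A}$, one verifies that for every $n\ge0$,
\[
Y_n \;=\; \sum_{j=0}^{n-1} e^{-(t_n - t_j)A} \Delta_j I_g \;=\; \int_0^{t_n} e^{-(t_n - s)A} g_\tau(s) \, dW(s) \;=\; \tilde y(t_n),
\]
where $\tilde y$ is the continuous mild solution \eqref{eq:mildsol} corresponding to data $g_\tau$. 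Since $(e^{-uA})_{u \ge 0}$ is uniformly bounded on $X_0$ and on $X_1=D(A)$, by complex interpolation it is uniformly bounded on $X_{1/2}$, so $\|g_\tau\|_{L^p(0,T;\gamma(H,X_{1/2}))} \lesssim \|g\|_{L^p(0,T;\gamma(H,X_{1/2}))}$. Applying the $\SMR$ hypothesis to $\tilde y$ then gives
\[
\bigl(\E\|A\tilde y\|_{L^p(0,T;X_0)}^p\bigr)^{1/p} \lesssim C^A_{\SMR(p,T)} \, \|g\|_{L^p(\Omega; L^p(0,T;\gamma(H,X_{1/2})))}.
\]

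To convert this continuous estimate into the desired discrete bound, I would use for each $n \ge 1$ and each $t \in [t_{n-1}, t_n]$ the triangle inequality $\|AY_n\| \le \|A\tilde y(t)\| + \|A\tilde y(t_n) - A\tilde y(t)\|$, raise to the $p$-th power, integrate over $t \in [t_{n-1}, t_n]$ (the integrand on the left is constant in $t$, producing a factor $\tau$), and sum over $n \ge 1$ (the term $n=0$ vanishes as $Y_0=0$). The leading term yields $\int_0^T \|A\tilde y(t)\|^p \, dt$, bounded by the $\SMR$ estimate above. For the remaining error term, I would use the explicit identity
\[
A\tilde y(t_n) - A\tilde y(t) \;=\; A e^{-\tau A} \int_t^{t_n} g(s) \, dW(s) \;+\; \bigl[e^{-(t_n-t)A} - I\bigr] A\tilde y(t), \qquad t \in [t_{n-1}, t_n],
\]
which is obtained by splitting $\tilde y(t_n) - \tilde y(t)$ into stochastic integrals over $[0,t]$ and $[t,t_n]$, factoring $e^{-(t_n-s)A} - e^{-(t-s)A} = \bigl[e^{-(t_n - t)A} - I\bigr]e^{-(t-s)A}$ on $[0,t]$, and using that $g_\tau(s) = e^{-(s - t_{n-1})A} g(s)$ on $[t_{n-1}, t_n]$ so that $e^{-(t_n - s)A} g_\tau(s) = e^{-\tau A} g(s)$.

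The second summand in the identity is pointwise bounded by $(M+1)\|A\tilde y(t)\|$ by uniform boundedness of the semigroup, so its contribution is absorbed into a second application of the $\SMR$ bound. For the first summand, I would apply the one-sided Burkholder-Davis-Gundy inequality (Proposition~\ref{prop:BDGtype2}), using the type-$2$ assumption on $X_0$, the gradient estimate $\|Ae^{-\tau A}\|_{\calL(X_{1/2}, X_0)} \lesssim \tau^{-1/2}$ from \eqref{Eq:Shifted decay 1}, and Hölder's inequality $\|\cdot\|_{L^2(t_{n-1},t_n)} \le \tau^{1/2 - 1/p} \|\cdot\|_{L^p(t_{n-1},t_n)}$, which is valid because $p \ge 2$. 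The factors combine cleanly: the $\tau^{-p/2}$ from the semigroup bound cancels against $\tau^{p/2-1}$ from Hölder and the outer $\tau$ from integrating in $t$, leaving a clean bound by $\E\|g\|_{L^p(t_{n-1}, t_n; \gamma(H, X_{1/2}))}^p$, which telescopes to $\E\|g\|_{L^p(0,T; \gamma(H, X_{1/2}))}^p$ upon summation over $n$. The main technical obstacle is keeping the singular factor $\tau^{-1/2}$ from the gradient estimate integrable: it is essential to keep this singularity inside the $L^2$-integral provided by BDG rather than pulling it out, since the latter would produce a non-integrable $\tau^{-p/2}$. Collecting everything yields $C^{(EE)}_{\DSMR(p,T)} \lesssim C^A_{\SMR(p,T)} + 1$ with the implicit constant depending only on $p$, $X_0$, and the sectoriality data of $A$, which is precisely the claim.
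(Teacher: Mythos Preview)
Your proof is correct and takes a genuinely different route from the paper. The key distinction is your transport trick: by setting $g_\tau(s) = e^{-(s-t_j)A}g(s)$ on $[t_j,t_{j+1})$, you arrange that $Y_n = \tilde y(t_n)$ is an \emph{exact} sample of a continuous mild solution, so the only error to control is the local oscillation $A\tilde y(t_n)-A\tilde y(t)$ on a single interval. Your two-term identity for this oscillation is clean, and the resulting estimates involve only the pointwise bounds $\|e^{-uA}\|_{\calL(X_0)}\le M$ and $\|Ae^{-\tau A}\|_{\calL(X_{1/2},X_0)}\lesssim \tau^{-1/2}$ together with BDG and H\"older. By contrast, the paper compares $Y_n$ to the mild solution with the \emph{original} data $g$, which forces a global kernel difference $\Phi(t,s,n,j)=A(e^{-\tau(n-j)A}-e^{-(t-s)A})$; this requires an interpolation bound $\|\Phi\|_{\calL(X_{1/2},X_0)}\lesssim \tau^{-1/2}(n-j-1)^{-1}$ and a discrete Minkowski convolution inequality, plus separate handling of the diagonal term $j=n-1$ and the tail integrals on $[t_{n-1},t_n]$ and $[t_n,t]$. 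Your argument is shorter and more conceptual, but it is specific to the exponential Euler scheme because it relies on the semigroup law to factor $e^{-(t_n-t_j)A}=e^{-(t_n-s)A}e^{-(s-t_j)A}$; the paper's kernel-comparison method, while heavier here, is the template that extends to the rational schemes in the next subsection. One minor remark: your comment about ``keeping the singularity inside the $L^2$-integral'' is not really the point---since $Ae^{-\tau A}$ is $s$-independent, pulling it out before or after BDG gives the same bound; the cancellation simply comes from matching the $\tau^{-p/2}$ against $\tau^{p/2-1}$ from H\"older and the outer $\tau$.
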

\begin{proof}
	Let $T\in(0,\infty]$ and for admissible $\tau >0$ let $\pi_\tau \coloneq \{t_n= n \tau \colon 0 \le n \le N\}$ be a uniform partition of $[0,T]$.
	Let $g\in L^p_{\mathbb{F}}(\Omega; L^p(0,T;\gamma(H,X_{1/2})))$. In order to shorten the notation, we write $\|g\|_{\Gamma_p}$ for the latter $L^p$-norm.

	Since $A$ has $\SMR(p,T)$, we infer that
	\begin{equation} \label{Definition of SMR for step functions new Shifted}
		\Big (   \sum_{n=0}^{N-1} \int_{t_{n}}^{t_{n+1}} \E \Big \| A \int_0^t e^{-(t-s)A}g(s) \, dW(s) \Big \|_{X_0}^p dt \Big )^{1/p} \le C^A_{\SMR(p,T)} \|g\|_{\Gamma_p}.
	\end{equation}
	In order to show that the exponential Euler (EE) scheme has $\DSMR(p,T)$, it suffices to establish the estimate
	\begin{equation*}
		\Big ( \sum_{n=1}^{N-1} \tau \, \E \Big \| A \sum_{j=0}^{n-1} e^{-\tau(n-j) A} \Delta_j I_g \Big \|_{X_0}^p \Big )^{1/p} \le C \|g\|_{\Gamma_p}.
	\end{equation*}
Note that by 	Proposition \ref{prop:BDGtype2} and \eqref{Eq:Shifted decay 1} we can estimate the $j=n-1$ term by
	\begin{align*}
		\Big ( \sum_{n=1}^{N-1} \tau \, \E \| A  e^{-\tau A} \Delta_{n-1} I_g \|_{X_0}^p \Big )^{1/p}
& \lesssim_{p,X_0} \Big ( \sum_{n=1}^{N-1} \tau \, \E \| A  e^{-\tau A} g \|^p_{L^2((t_{n}, t_{n+1});\gamma(H,X_0))} \Big )^{1/p}
\\ & \lesssim_{p,X_0} \Big ( \sum_{n=1}^{N-1} \tau^{\frac{p}{2}} \, \E \| A  e^{-\tau A} g \|^p_{L^p((t_{n}, t_{n+1});\gamma(H,X_0))} \Big )^{1/p}
\\ & \lesssim_{p,X_0,A} \Big ( \sum_{n=1}^{N-1} \E \|g\|^p_{L^p((t_{n}, t_{n+1});\gamma(H,X_{1/2}))} \Big )^{1/p}
\\ & = \|g\|_{\Gamma_p}.
	\end{align*}
	Therefore it remains to prove that
	\begin{equation}
		\label{Eq:DSMR of EE}
		\Big ( \sum_{n=2}^{N-1} \tau \, \E \Big \| A \sum_{j=0}^{n-2} e^{-\tau(n-j) A} \Delta_j I_g \Big \|_{X_0}^p \Big )^{1/p} \le C \|g\|_{\Gamma_p}.
	\end{equation}
	By the triangle inequality and \eqref{Definition of SMR for step functions new Shifted},
	it suffices to establish the estimate
	\begin{equation*}
		\Big ( \sum_{n=2}^{N-1} \int_{t_{n}}^{t_{n+1}} \E \Big \| A \sum_{j=0}^{n-2} e^{-\tau(n-j) A} \Delta_j I_g - A \int_0^t e^{-(t-s)A}g(s) \, dW(s) \Big \|_{X_0}^p \, dt \Big )^{1/p}  \le C \|g\|_{\Gamma_p}.
	\end{equation*}
    To this end, let $ 2\le n\le N-1$ and $t \in [t_{n},t_{n+1}]$. Note that for $j \le n-1$  we have that $ \1_{[t_j,t_{j+1} \wedge t)} =\1_{[t_j,t_{j+1})}$, whereas  $\1_{[t_j,t_{j+1} \wedge t)}=0$ for $j \ge n+1$ and so by the triangle inequality
	\begin{alignat*}{2}
     & \Big ( \sum_{n=2}^{N-1} \int_{t_{n}}^{t_{n+1}} \E \Big \| A \sum_{j=0}^{n-2} e^{-\tau(n-j) A} \Delta_j I_g - A \int_0^t e^{-(t-s)A}g(s) \, dW(s) \Big \|_{X_0}^p dt\Big)^{1/p}
		\\
		&\leq \Big( \sum_{n=2}^{N-1} \int_{t_{n}}^{t_{n+1}} \E  \Big \|  \sum_{j=0}^{n-2} \int_{t_j}^{t_{j+1}}\Phi(t,s,n,j) g(s) dW(s) \Big \|_{X_0}^p dt\Big)^{1/p}
\\ & \qquad + \Big( \sum_{n=2}^{N-1} \int_{t_{n}}^{t_{n+1}}\E  \Big \| \int_{t_{n-1}}^{t_n} A e^{-(t-s)} g(s)dW(s)\Big \|_{X_0}^p dt\Big)^{1/p} 
\\& \qquad + \Big( \sum_{n=2}^{N-1} \int_{t_{n}}^{t_{n+1}} \E \int_{t_n}^t  A\exp{-(t-s)A} g(s) dW(s)  \Big \|_{X_0}^p dt\Big)^{1/p},
	\end{alignat*}
where $\Phi(t,s,n,j)\coloneq A (\exp{-\tau(n-j)A} - \exp{-(t-s)A})$ appears in the first and main term.

The latter two terms can easily be estimated using the stochastic maximal regularity of $A$. 
	Hence, it remains to prove the estimate
	$$ \sum_{n=2}^{N-1} \int_{t_{n}}^{t_{n+1}}  	\E  \Big \| \int_0^T  \Big  ( \sum_{j=0}^{n-2} \Phi(t,s,n,j) g(s) \1_{[t_j,t_{j+1})}(s) \Big ) dW(s) \Big \|^p dt \le C^p \|g\|_{\Gamma_p}^p.$$
	By Proposition \ref{prop:BDGtype2}, it is enough to estimate
	\begin{equation} \label{Eq:Must bound - SMR implies EE DSMR new Shifted}
		\sum_{n=2}^{N-1} \int_{t_{n}}^{t_{n+1}} \E \Big ( \sum_{j=0}^{n-2} \int_{t_j}^{t_{j+1}} \|\Phi(t,s,n,j) g(s)\|^2_{\gamma(H,X_0)}\, ds \Big)^{p/2} dt \le C^p \|g\|_{\Gamma_p}^p.
	\end{equation}

In order to estimate the latter, we first bound the operator norm of $\Phi(t,s,n,j)$. We claim that for $2\leq n\leq N-1$, $0\leq j\leq n-2$, $t_{n}\leq t\leq t_{n+1}$, and $t_j\leq s\leq t_{j+1}$
\begin{align*}
\|\Phi(t,s,n,j)\|_{\calL(X_{1/2},X_0)}\leq \|\Phi(t,s,n,j)\|_{\calL(X_{1},X_0)}^{1/2} \|\Phi(t,s,n,j)\|^{1/2}_{\calL(X_0)}\lesssim \frac{1}{\tau^{1/2} (n-j-1)}.
\end{align*}
Indeed, the first bound follows by interpolation and the $\|\Phi(t,s,n,j)\|_{\calL(X_{1},X_0)}$-norm is clearly uniformly bounded. To estimate $\|\Phi(t,s,n,j)\|_{\calL(X_0)}$, let $I_{nj}$ denote the interval with endpoints $t-s$ and $\tau(n-j)$. One can check that $|I_{nj}|\leq \tau$ and $\min I_{nj} \geq \tau (n-j-1)$. Therefore,
we write
\begin{align*}
\|\Phi(t,s,n,j)\|_{\calL(X_0)} \leq \int_{I_{nj}} \|A^2e^{-rA} \| dr \lesssim \frac{|I_{nj}|}{(\min I_{nj})^2}\leq \frac{1 }{\tau(n-j-1)^2},
\end{align*}
which clearly implies the claim. 
From the claim we obtain
	\begin{align*}
		\int_{t_j}^{t_{j+1}} \|\Phi(t,s,n,j) g(s)\|^2_{\gamma(H,X_0)}\, ds
		&\lesssim \frac{1}{\tau(n-j-1)^{2}}\|g\|_{L^2(t_j, t_{j+1};\gamma(H,X_{1/2}))}^2.
	\end{align*}
	
Using that the latter is $t$-independent, we see that the left-hand side of \eqref{Eq:Must bound - SMR implies EE DSMR new Shifted} is, up to a multiplicative constant, bounded by
\begin{align*}
&\tau^{1-\frac{p}{2}} \E \sum_{n=2}^{N-1} \Big ( \sum_{j=0}^{n-2}  \1_{n-j-1\ge 1}  \frac{1}{(n-j-1)^{2}} \|g\|_{L^2(t_j, t_{j+1};\gamma(H,X_{1/2}))}^2 \Big)^{p/2}
\\
& \qquad \stackrel{\text{(i)}}{\lesssim}
\tau^{1-\frac{p}{2}} \E \sum_{n=0}^{N-2} \|g\|_{L^2(t_n, t_{n+1};\gamma(H,X_{1/2}))}^p
 \stackrel{\text{(ii)}}{\leq}
\E \sum_{n=1}^{N-2} \|g\|_{L^p(t_n, t_{n+1};\gamma(H,X_{1/2}))}^p
\\ & \qquad \leq
\E \|g\|_{L^p(0,T;\gamma(H,X_{1/2}))}^p,
\end{align*}
where we used the discrete case of Minkowski's convolution inequality in (i) and H\"older's inequality in (ii).
\end{proof}

\subsection{Equivalence of discrete $\SMR$ for different schemes}\label{subs:equivDSMR}

Our next step is to compare rational schemes to the exponential Euler scheme.
\begin{theorem}\label{EE DSMR implies r(z) DSMR Banach for r(infty)=0 shift}
Suppose that Assumption \ref{assum:mainDSMR} holds, where $R_\tau\coloneq r(\tau A)$ is the rational scheme introduced there. Let $T\in(0,\infty]$ and $p \in (2,\infty)$.
Then, the exponential Euler scheme has discrete maximal $\ell^p$-regularity if and only if $R$ has discrete maximal $\ell^p$-regularity. Moreover,
		$$C^{(EE)}_{\DSMR(p,T)} +1 \eqsim_{p,X_0,r,A} C^{R}_{\DSMR(p,T)}+1,$$
where the implicit constant only depends on $p$, $X_0$, the rational function $r$, and the sectoriality constant and angle of $A$. 	\end{theorem}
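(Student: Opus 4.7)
The plan is to bypass both schemes separately and instead directly estimate the \emph{difference}
\[
Z_n \coloneq Y_n^R - Y_n^{(EE)} = \sum_{j=0}^{n-1}\bigl(R_\tau^{n-j}-e^{-(n-j)\tau A}\bigr)\,\Delta_j I_g, \qquad 1\le n\le N,
\]
in the $\DSMR$ norm, with a constant depending only on $p$, $X_0$, $A$ and $r$, and not on $\tau$, $g$, or on which scheme is under consideration. Once the bound
\[
\|AZ\|_{L^p(\Omega;\ell^p_\tau(X_0))}\le C\,\|g\|_{L^p(\Omega;L^p(0,T;\gamma(H,X_{1/2})))}
\]
is in hand, the theorem follows at once from the triangle inequality $AY^R=AZ+AY^{(EE)}$: $\DSMR$ for either scheme implies $\DSMR$ for the other, with comparable constants.

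To prove the display above, I would first apply the one-sided BDG inequality (Proposition \ref{prop:BDGtype2}, valid since $X_0$ is UMD with type $2$) pointwise in $n$ to the stochastic convolution defining $Z_n$, obtaining
\[
\E\|AZ_n\|_{X_0}^p \lesssim \E\Big(\sum_{j=0}^{n-1}\int_{t_j}^{t_{j+1}} \bigl\|A(R_\tau^{n-j}-e^{-(n-j)\tau A})g(s)\bigr\|_{\gamma(H,X_0)}^2\, ds\Big)^{p/2}.
\]
The key input is then a sharp bound on $\|A(R_\tau^m-e^{-m\tau A})\|_{\L(X_{1/2},X_0)}$. Lemma \ref{Lemma:Fractional powers of difference EE and rational} with $\alpha=1$ gives $\|A(R_\tau^m-e^{-m\tau A})\|_{\L(X_0)}\lesssim \tau^{-1}m^{-(\ell+1)}$, and the same lemma with $\alpha=0$ (combined with the fact that $A$ commutes with both $R_\tau^m$ and $e^{-m\tau A}$ on $X_1$) gives $\|A(R_\tau^m-e^{-m\tau A})\|_{\L(X_1,X_0)}\lesssim m^{-\ell}$. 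Complex interpolation of these two bounds on the domain couple $(X_0,X_1)\to X_0$ then yields the crucial kernel estimate
\[
\|A(R_\tau^m-e^{-m\tau A})\|_{\L(X_{1/2},X_0)}\lesssim \tau^{-1/2}\,m^{-(\ell+1/2)}.
\]

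Substituting this into the BDG bound and applying Hölder's inequality in time to pass from $L^2$ to $L^p$ integrability of $g$ (at the cost of a factor $\tau^{1-2/p}$), the inner integral on $(t_j,t_{j+1})$ is controlled by $\tau^{-2/p}(n-j)^{-(2\ell+1)}G_j^2$, where $G_j\coloneq \|g\|_{L^p(t_j,t_{j+1};\gamma(H,X_{1/2}))}$. Multiplying by $\tau$ and summing over $n$, the $\tau$-powers collapse exactly, leaving the discrete convolution estimate
\[
\sum_{n}\tau\,\E\|AZ_n\|_{X_0}^p\lesssim \E\sum_n\Big(\sum_{j<n} K(n-j)\,G_j^2\Big)^{p/2}
\]
with kernel $K(m)=m^{-(2\ell+1)}$. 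Since $\ell\ge 1$ implies $K\in\ell^1(\N)$, discrete Young's convolution inequality (applicable because $p/2\ge 1$) bounds the right-hand side by $\|K\|_{\ell^1}^{p/2}\,\E\sum_j G_j^p=\|K\|_{\ell^1}^{p/2}\,\|g\|_{L^p(\Omega;L^p(0,T;\gamma(H,X_{1/2})))}^p$, exactly what is needed.

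The main subtle point will be the interpolation step. The naive factorisation $A(R_\tau^m-e^{-m\tau A})=A^{1/2}\cdot A^{1/2}(R_\tau^m-e^{-m\tau A})$ together with the direct use of Lemma \ref{Lemma:Fractional powers of difference EE and rational} for $\alpha=1/2$ would need the identification $X_{1/2}=D(A^{1/2})$, which by Lemma \ref{lem:BIP} requires BIP of $A$ and is not part of Assumption \ref{assum:mainDSMR}. Interpolating the operator $T_m=A(R_\tau^m-e^{-m\tau A})$ itself on the domain couple $(X_0,X_1)$ sidesteps this issue. Everything else is convolution bookkeeping; the role of the consistency order $\ell\ge 1$ is precisely to make $K$ summable, which is why the correction $Z$ can be absorbed without any $\DSMR$ hypothesis on either scheme.
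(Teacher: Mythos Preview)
Your proposal is correct and follows essentially the same route as the paper: estimate the difference $AZ_n$ directly, apply the one-sided BDG inequality, interpolate the two endpoint bounds from Lemma~\ref{Lemma:Fractional powers of difference EE and rational} on the couple $(X_0,X_1)$ to obtain $\|A(R_\tau^m-e^{-m\tau A})\|_{\L(X_{1/2},X_0)}\lesssim \tau^{-1/2}m^{-(\ell+1/2)}$, and then finish with a discrete convolution inequality and H\"older. The only cosmetic difference is that the paper applies Minkowski's convolution inequality before H\"older, whereas you apply H\"older first and then Young; the two orderings are equivalent here, and your observation about why one must interpolate the operator on $(X_0,X_1)$ rather than factor through $A^{1/2}$ (since BIP is not assumed) matches exactly what the paper does implicitly.
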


 \begin{proof}[Proof of Theorem \ref{EE DSMR implies r(z) DSMR Banach for r(infty)=0 shift}]
 	 By Lemma \ref{Lemma:Fractional powers of difference EE and rational} we get that
 	 $\|A (e^{-n \tau A} - r(\tau A)^n)\|_{\L(X_0)}\lesssim_{r,A} \tau^{-1} n^{-\ell-1}$ and
 	 $\|A (e^{-n \tau A} - r(\tau A)^n)\|_{\calL(X_1, X_0)} \lesssim_{r,A}  n^{-\ell}$. Hence by interpolation, 
	\begin{equation} \label{Eq: Estimate for EE iff rational - shifted}
		 \|A (e^{-n \tau A} - r(\tau A)^n)\|_{\calL(X_{1/2},X_0)} \lesssim_{r,A} \frac{1}{\tau^{1/2}n^{\ell+1/2}}.
	\end{equation} 	
	Let $\tau >0$ be admissible, $\pi_\tau \coloneq \{t_n = n \tau \colon 0\le n \le N\}$ be a uniform partition of $[0,T]$ and let $g \in L^p_{\mathbb F}(\Omega;L^p(0,T;(\gamma(H,X_{1/2}))))$.
 	To prove the equivalence, by the triangle inequality, it suffices to bound the difference of the schemes uniformly in $\tau$. Thus it suffices to establish the estimate
 		\begin{equation} \label{eq:toproveEEotherscheme} 	
\sum_{n=1}^{N-1} \tau \, \E \Big \| \sum_{j=0}^{n-1}A \big ( e^{-\tau(n-j) A} -r(\tau A)^{n-j} \big ) \Delta_j I_g \Big \|_{X_0}^p \le C^p \|g\|_{L^p(\Omega;L^p(0,T;(\gamma(H,X_{1/2}))))}^p .
 \end{equation}
By Proposition \ref{prop:BDGtype2} and \eqref{Eq: Estimate for EE iff rational - shifted}, we can estimate
 	\begin{align*}
 		&	\E \Big \| \sum_{j=0}^{n-1}A \big ( e^{-\tau(n-j) A} -r(\tau A)^{n-j} \big ) \Delta_j I_g \Big \|_{X_0}^p
 		\\
 		&\lesssim_{p,X_0} \E \Big(\sum_{j=0}^{n-1}\int_{t_j}^{t_{j+1}} \, \| A \big ( e^{-\tau(n-j) A} -r(\tau A)^{n-j} \big ) g(s) \|^2_{\gamma(H,X_0)} \, ds\Big)^{p/2}
 		\\
 		& \le \E \Big(\sum_{j=0}^{n-1} \| A \big ( e^{-\tau(n-j) A} -r(\tau A)^{n-j} \big ) \|_{\L(X_{1/2},X_0)}^2  \, \|g\|^2_{L^2(t_j, t_{j+1};\gamma(H,X_{1/2}))}\Big)^{p/2}
 		\\
 		& \lesssim_{r,A} \E \Big(\sum_{j=0}^{n-1} \frac{1}{\tau (n-j)^{2\ell+1}} \, \|g\|^2_{L^2(t_j, t_{j+1};\gamma(H,X_{1/2}))}\Big)^{p/2}.
 	\end{align*}
 Therefore, up to a multiplicative constant, the left-hand side of \eqref{eq:toproveEEotherscheme} can be estimated by
 \begin{align*}
\sum_{n=1}^{N-1} \tau \, \E \Big(\sum_{j=0}^{n-1} \frac{1}{\tau (n-j)^{2\ell+1}} \, \|g\|^2_{L^2(t_j, t_{j+1};\gamma(H,X_{1/2}))}\Big)^{p/2}
   & \stackrel{\text{(i)}}{\lesssim} \tau^{1-\frac p2}\E \sum_{n=0}^{N-1} \|g\|^{p}_{L^2(t_n, t_{n+1};\gamma(H,X_{1/2}))} 
  \\
  &\stackrel{\text{(ii)}}{\leq} \E \sum_{n=0}^{N-1} \|g\|^{p}_{L^p(t_n, t_{n+1};\gamma(H,X_{1/2}))} 
 \\ &\le \E \|g\|^{p}_{L^p(0,T;\gamma(H,X_{1/2}))},
 \end{align*}
 where we used the discrete case of Minkowski's convolution inequality in (i) and H\"older's inequality in (ii).
 \end{proof}

As a consequence, we immediately obtain the following:
\begin{corollary} [Equivalence of $\DSMR$ for all rational schemes] \label{Cor: Equivalence of DSMR for rational schemes shift}
 		Suppose that Assumption \ref{assum:mainDSMR}\eqref{it:mainDSMR1} holds. Let $T\in(0,\infty]$ and $p \in [2,\infty)$. Let $r(z)$ and $s(z)$ be consistent of order $\geq 1$, and $A(\theta)$-stable rational functions with $\theta \in (\omega(A), \frac \pi 2]$ and $r(\infty)=s(\infty)=0$.
Let $R_{\tau} = r(\tau A)$ and $S_{\tau} = s(\tau A)$.
 		Then, the scheme $R$ has discrete stochastic maximal $\ell^p$-regularity on $(0,T)$ if and only if $S$ has discrete stochastic maximal $\ell^p$-regularity on $(0,T)$.
 	\end{corollary}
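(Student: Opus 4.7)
The plan is to observe that this corollary is essentially immediate from Theorem \ref{EE DSMR implies r(z) DSMR Banach for r(infty)=0 shift}, which serves as a ``bridge'' through the exponential Euler (EE) scheme. Both $r$ and $s$ satisfy all the hypotheses of Assumption \ref{assum:mainDSMR}\eqref{it:mainDSMR2}, namely they are rational, consistent of order $\ell\geq 1$, $A(\theta)$-stable with $\theta \in (\omega(A), \pi/2]$, and vanish at infinity. Thus Assumption \ref{assum:mainDSMR} is fulfilled for each of the two schemes $R_\tau = r(\tau A)$ and $S_\tau = s(\tau A)$.

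First I would apply Theorem \ref{EE DSMR implies r(z) DSMR Banach for r(infty)=0 shift} to the rational function $r$: this yields the equivalence
\[
R \in \DSMR(p,T) \iff (\mathrm{EE}) \in \DSMR(p,T),
\]
together with the quantitative comparison $C^{R}_{\DSMR(p,T)} + 1 \eqsim_{p,X_0,r,A} C^{(\mathrm{EE})}_{\DSMR(p,T)} + 1$. Next I would apply the same theorem to $s$ to obtain
\[
S \in \DSMR(p,T) \iff (\mathrm{EE}) \in \DSMR(p,T),
\]
with the analogous quantitative bound $C^{S}_{\DSMR(p,T)} + 1 \eqsim_{p,X_0,s,A} C^{(\mathrm{EE})}_{\DSMR(p,T)} + 1$.

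Chaining the two equivalences through the EE scheme gives $R \in \DSMR(p,T) \iff S \in \DSMR(p,T)$, and the two constant estimates combine to give
\[
C^{R}_{\DSMR(p,T)} + 1 \eqsim_{p,X_0,r,s,A} C^{S}_{\DSMR(p,T)} + 1,
\]
completing the proof. There is no genuine obstacle here; the real work was done in establishing Theorem \ref{EE DSMR implies r(z) DSMR Banach for r(infty)=0 shift}, where the rational scheme was compared to the exponential Euler scheme via the decay estimate \eqref{Ineq:Fractional powers of difference EE and rational} on $\|A(e^{-n\tau A} - r(\tau A)^n)\|$. The corollary simply takes advantage of the fact that the EE scheme serves as a universal reference point whose DSMR characterization is independent of the choice of rational approximation.
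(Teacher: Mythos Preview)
Your proposal is correct and matches the paper's approach exactly: the paper introduces the corollary with the phrase ``As a consequence, we immediately obtain the following,'' indicating that it is an immediate consequence of Theorem~\ref{EE DSMR implies r(z) DSMR Banach for r(infty)=0 shift} obtained precisely by chaining both rational schemes through the exponential Euler scheme. No additional argument is provided in the paper, and none is needed beyond what you have written.
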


\subsection{Discrete SMR implies SMR}\label{subs:DSMR-SMR}

To prove that discrete stochastic maximal regularity implies its continuous analogue, we need a result on the convergence of the underlying scheme. The literature contains many results of this type, and the reader is, for instance, referred to \cite{GyMil09, JenKlo09, LordPowSha} and references therein for an overview on this topic. The main feature of the result below is that the convergence rate seems to be optimal and contains parameters $\alpha$ and $\beta$, which entail parabolic smoothing. Even the choice $\alpha=\beta=0$ seems to be new in the setting we consider. 
\begin{proposition} \label{prop:Convergece of rational scheme regular G}
Suppose that Assumption \ref{assum:mainDSMR} holds. Let $0\leq \alpha\leq \beta\leq 1$ be such that $\beta-\alpha<1/2$. 
Let $T \in (0,\infty]$ and $p \in [2,\infty)$. For admissible $\tau$, let $\pi_\tau \coloneq \{t_n = n\tau \colon 0 \le n \le N \}$ be a uniform partition of $[0,T]$ with $N = T/\tau$.
Then there is a constant $C$ depending only on $\alpha, \beta,p,X_0,X_1$, the sectoriality constants of $A$, and the function $r$, such that for every piecewise constant $g \coloneq \sum_{n=0}^{N-1} g_n \1_{[t_{n},t_{n+1})}$ with $g_n \in L^p(\Omega,\F_{t_n},\P; \gamma(H,X_{\alpha}))$, $0 \le n \le N-1$,
$$\Big(\sum_{n=0}^{N-1} \|A^{\beta}(y-Y_n)\|^p_{L^p((t_n,t_{n+1})\times \Omega ; X_0)} \Big)^{1/p} \le C \tau^{\frac12+\alpha-\beta}  \|g\|_{L^p((0,T)\times \Omega; \gamma(H,X_\alpha))},$$
where $Y$ is the discrete solution defined in \eqref{Eq:Definition of approximation scheme} and $y$ is the mild solution as defined in \eqref{eq:mildsol}.
\end{proposition}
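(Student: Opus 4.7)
The plan is to represent the difference $y(t) - Y_n$ explicitly as a stochastic integral, reduce its $L^p(\Omega)$ norm to $L^2$-in-time operator-norm estimates via the one-sided Burkholder–Davis–Gundy inequality of Proposition \ref{prop:BDGtype2}, and conclude by a discrete Young convolution inequality after summing over $n$. Exploiting the piecewise-constant form of $g$, for $t \in (t_n, t_{n+1}]$ one has
\begin{align*}
A^\beta\bigl(y(t) - Y_n\bigr) &= \sum_{j=0}^{n-1} \int_{t_j}^{t_{j+1}} A^\beta\bigl[e^{-(t-s)A} - R_\tau^{n-j}\bigr] g_j \, d W(s) \\
&\qquad + \int_{t_n}^{t} A^\beta e^{-(t-s)A} g_n \, d W(s),
\end{align*}
so applying Proposition \ref{prop:BDGtype2} and factoring out the operator norms (using the ideal property of $\gamma$-radonifying operators) reduces everything to controlling $\|A^\beta[e^{-(t-s)A} - R_\tau^{n-j}]\|_{\calL(X_\alpha, X_0)}$ in $L^2(s)$, paired with $\|g_j\|^2_{\gamma(H,X_\alpha)}$.

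Next I would estimate these operator norms. For the off-diagonal range $0 \le j \le n-2$ I split
\[
e^{-(t-s)A} - R_\tau^{n-j} = \bigl[e^{-(t-s)A} - e^{-\tau(n-j)A}\bigr] + \bigl[e^{-\tau(n-j)A} - R_\tau^{n-j}\bigr].
\]
The first bracket equals $\pm\int A e^{-rA}\, dr$ over an interval of length $\le \tau$ whose minimum is $\ge \tau(n-j-1) \gtrsim \tau(n-j)$, and combined with the analytic bound $\|A^{1+\beta} e^{-rA}\|_{\calL(X_\alpha, X_0)} \lesssim r^{-(1+\beta-\alpha)}$ (obtained by interpolating the endpoints $\alpha = 0$ and $\alpha = 1$), this gives $\tau^{-(\beta-\alpha)}(n-j)^{-(1+\beta-\alpha)}$. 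The second bracket is handled by interpolating Lemma \ref{Lemma:Fractional powers of difference EE and rational} between $\calL(X_0)$ and $\calL(X_1, X_0)$, yielding the even better estimate $\tau^{-(\beta-\alpha)}(n-j)^{-(\ell+\beta-\alpha)}$. For the diagonal index $j = n-1$ and the residual integral $\int_{t_n}^{t}$ no gain from large $n-j$ is available, so I would use the direct bounds $\|A^\beta e^{-rA}\|_{\calL(X_\alpha, X_0)} \lesssim r^{-(\beta-\alpha)}$ and $\|A^\beta R_\tau\|_{\calL(X_\alpha, X_0)} \lesssim \tau^{-(\beta-\alpha)}$; the resulting square-integrability of $r \mapsto r^{-(\beta-\alpha)}$ over a $\tau$-neighbourhood of zero yields $\lesssim \tau^{1-2(\beta-\alpha)}$, which is exactly where the hypothesis $\beta - \alpha < 1/2$ enters.

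Squaring and integrating in $s$ converts all of the above into contributions of the form $\tau^{1-2(\beta-\alpha)}\, c_{n-j}\, \|g_j\|_{\gamma(H,X_\alpha)}^2$, where $c_0 = c_1 = 1$ and $c_k = k^{-2(1+\beta-\alpha)}$ for $k \ge 2$; the sequence $(c_k)$ lies in $\ell^1$ since $2(1+\beta-\alpha) > 1$. Raising to $p/2$, integrating $t$ over $(t_n, t_{n+1})$ (adding a factor $\tau$), summing over $n$, and applying the discrete Young inequality $\|c \ast a\|_{\ell^{p/2}} \le \|c\|_{\ell^1} \|a\|_{\ell^{p/2}}$ (valid since $p/2 \ge 1$) with $a_j = \|g_j\|^2$, one obtains
\[
\sum_{n=0}^{N-1} \|A^\beta(y - Y_n)\|_{L^p((t_n, t_{n+1}) \times \Omega; X_0)}^p \lesssim \tau^{p(1/2+\alpha-\beta)} \cdot \tau \sum_{j} \E\|g_j\|_{\gamma(H,X_\alpha)}^p = \tau^{p(1/2+\alpha-\beta)} \|g\|_{L^p((0,T)\times\Omega;\gamma(H,X_\alpha))}^p,
\]
and taking $p$-th roots gives the required bound.

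The main obstacle is the treatment of the endpoint contributions ($j = n-1$ and the $(t_n, t]$ residual): no smoothing from large $n-j$ and no gain from scheme consistency can be exploited, so the entire estimate rests on the bare parabolic regularization of the semigroup itself, and the integrability of $r \mapsto r^{-(\beta-\alpha)}$ near zero is the precise source of the sharp restriction $\beta - \alpha < 1/2$. A secondary technicality is lifting the $\calL(X_0)$ estimates of Lemma \ref{Lemma:Fractional powers of difference EE and rational} to $\calL(X_\alpha, X_0)$ bounds by interpolation between $\alpha = 0$ and $\alpha = 1$, which passes through cleanly because the relevant operators commute with $A$ and so the $X_1$-endpoint reduces to known $\calL(X_0)$ estimates applied to $A^\beta$ with shifted exponents.
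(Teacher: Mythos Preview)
Your proof is correct and follows essentially the same approach as the paper's: the same splitting into residual, semigroup-difference, and scheme-difference pieces, the same BDG reduction to $L^2$-in-time operator norms, and the same discrete convolution inequality to close. Your treatment of the semigroup difference via $\int A^{1+\beta}e^{-rA}\,dr$ is slightly more direct than the paper's factorization $A^{\beta-1}(e^{-r_1 A}-e^{-r_2 A})\cdot A e^{-(n-j-1)\tau A}$, and you are more explicit about the interpolation step needed to lift Lemma~\ref{Lemma:Fractional powers of difference EE and rational} from $\calL(X_0)$ to $\calL(X_\alpha,X_0)$, but these are cosmetic differences.
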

The constant $C$ does not depend on $\tau$ and $T$. The above result can be seen as an optimal convergence result with parabolic smoothing. 
\begin{proof}
Let $n\ge0$ and $t\in (t_n, t_{n+1}]$. 
		We split the expression as 
		\begin{align*}
				\|A^{\beta}(y(t)-Y_n)\|_{X_0}
			&=\Big\|A^{\beta} \int_0^t \exp{-(t-s)A} g(s) \dWHs - \sum_{j=0}^{n-1} A^{\beta} R_\tau^{n-j} \Delta_j I_g \Big \|_{X_0}
			\\
			& = \Big \| \sum_{j=0}^{n-1} \int_{t_j}^{t_{j+1}} A^{\beta}(\exp{-(t-s)A} - R_\tau^{n-j}) g_j \dWHs + \int_{t_n}^t A^\beta \exp{-(t-s)A} g_n \dWHs \Big \|_{X_0}
			\\
			& \le \Big \| \sum_{j=0}^{n-1} \int_{t_j}^{t_{j+1}} A^{\beta}(\exp{-(t-s)A} - \exp{-\tau(n-j)A} ) g_j \dWHs  \Big \|_{X_0}
			\\ & \ +
			\Big \| \sum_{j=0}^{n-1} \int_{t_j}^{t_{j+1}} A^{\beta}(\exp{-\tau(n-j)A} -R_\tau^{n-j}) g_j \dWHs \Big \|_{X_0}
			+ \Big \| \int_{t_n}^t A^{\beta}\exp{-(t-s)A} g_n \dWHs \Big \|_{X_0}			\\
			& \coloneq I_{1,n}(t)+I_{2,n}(t) + I_{3,n}(t),
		\end{align*}
	where we set $I_{1,0} = I_{2,0} = 0$. 
		
		We estimate the $L^p(\Omega)$ norms of $I_{1,n}(t), I_{2,n}(t)$ and $I_{3,n}(t)$ as follows. For $I_{3,n}(t)$, Proposition \ref{prop:BDGtype2} gives that 
		\begin{align*}
			\|I_{3,n}(t)\|^p_{L^p(\Omega)} &\lesssim \E \Big( \int_{t_n}^t \|A^{\beta}\exp{-(t-s)A}\|^2_{\calL(X_\alpha,X_0)} \|g_n\|_{\gamma(H,X_\alpha)}^2 ds \Big)^{p/2} 
\\ & \lesssim \E \Big( \int_{t_n}^t (t-s)^{-2(\beta-\alpha)}\|g_n\|_{\gamma(H,X_\alpha)}^2 ds \Big)^{p/2} 
\\ & \lesssim  \tau^{p (\frac 12-(\beta-\alpha))} \E \|g_n\|^p_{\gamma(H,X_\alpha)}.
		\end{align*}
		To estimate $I_{1,n}(t)$, note that by Proposition \ref{prop:BDGtype2},
		\begin{align} \label{Eq: EE converges to mild solution new I1}
			\|I_{1,n}(t)\|^p_{L^p(\Omega)} & \lesssim
\E \Big ( \sum_{j=0}^{n-1} \int_{t_j}^{t_{j+1}} \| A^{\beta}(\exp{-(t-s)A} - \exp{-\tau(n-j)A}) g_j\|^2_{\gamma(H,X_0)} \, ds \Big )^{p/2}   \nonumber
			\\
			& \le \E \Big ( \sum_{j=0}^{n-1} \int_{t_j}^{t_{j+1}} \| A^{\beta}(\exp{-(t-s)A} - \exp{-\tau(n-j)A}) \|_{\L(X_\alpha,X_0)}^2 \, \| g_j\|^2_{\gamma(H,X_\alpha)} \, ds \Big )^{p/2} .
		\end{align}
For the term $j=n-1$ and $s\in (t_{n-1}, t_{n})$ we can estimate
\begin{align*}
\| A^{\beta}(\exp{-(t-s)A} - \exp{-\tau A})\|_{\L(X_\alpha,X_0)} \leq \|A^{\beta} \exp{-(t-s)A}\|_{\L(X_\alpha,X_0)} + \|A^{\beta}\exp{-\tau A} \|_{\L(X_\alpha,X_0)} \, \lesssim (t-s)^{-(\beta-\alpha)}. 
\end{align*}
Therefore, 
\begin{align*}
\int_{t_{n-1}}^{t_{n}} \|A^{\beta}( \exp{-(t-s)A} - \exp{-\tau A} )\|_{\L(X_\alpha,X_0)}^2 \, \|g_{n-1}\|^2_{\gamma(H,X_\alpha)} \, ds & \lesssim \int_{t_{n-1}}^{t_n} (t-s)^{-2(\beta-\alpha)} \|g_{n-1}\|^2_{\gamma(H,X_\alpha)} ds
\\ & \lesssim \tau^{1-2(\beta-\alpha)} \|g_{n-1}\|^2_{\gamma(H,X_\alpha)} 
\end{align*}
and thus 
\begin{align*}
\E\Big(\int_{t_{n-1}}^{t_{n}} \| A^{\beta}(\exp{-(t-s)A} - \exp{-\tau A}) \|_{\L(X_\alpha,X_0)}^2 \, \|g_{n-1}\|^2_{\gamma(H,X_\alpha)} \, ds\Big)^{p/2}  \lesssim \tau^{p (\frac 12-(\beta-\alpha))} \E\|g_{n-1}\|^p_{\gamma(H,X_\alpha)}.
\end{align*}
For $j\leq n-2$ and $s\in (t_j, t_{j+1}]$, by \eqref{Eq: Decay rate of semigroup difference} we can estimate 
  \begin{align*}
   \| & A^{\beta}(\exp{-(t-s)A} - \exp{-\tau(n-j)A}) \|_{\L(X_{\alpha},X_0)} \\ & \leq \| A^{\beta-1} (\exp{-(t-s - (n-j-1)\tau)A} - \exp{-\tau A})\|_{\L(X_\beta , X_0)} 
  \|A \exp{-(n-j-1)\tau A}\|_{\L(X_\alpha,X_\beta)}
  \\ &   
   \lesssim (t-s-\tau(n-j))  ((n-j-1)\tau)^{-(1+\beta-\alpha)} \leq \tau^{-(\beta-\alpha)} (n-j-1)^{-(1+\beta-\alpha)}. 
\end{align*}
 Substituting this into \eqref{Eq: EE converges to mild solution new I1} gives
$$
\|I_{1,n}(t)\|^p_{L^p(\Omega)}  \lesssim   \tau^{p (\frac 12-(\beta-\alpha))} \Big[
 \E \Big ( \sum_{j=0}^{n-2} \frac{1}{(n-j-1)^{2(1+\beta-\alpha)}}\| g_j\|^2_{\gamma(H,X_{\alpha})}  \Big )^{p/2}  + \E\|g_{n-1}\|^p_{\gamma(H,X_\alpha)}\Big].
$$	
We now estimate $I_{2,n}(t)$. By Proposition \ref{prop:BDGtype2},	
    \begin{align} \label{Eq: EE converges to mild solution new I2}
			\|I_{2,n}(t)\|^p_{L^p(\Omega)} & \lesssim \E \Big ( \sum_{j=0}^{n-1} \int_{t_j}^{t_{j+1}} \| A^\beta(\exp{-\tau(n-j)A} -R_\tau^{n-j}) g_j\|^2_{\gamma(H,X_0)} \, ds \Big )^{p/2}  \nonumber
			\\
			& \le \E \Big ( \sum_{j=0}^{n-1} \int_{t_j}^{t_{j+1}} \|A^\beta( \exp{-\tau(n-j)A} -R_\tau^{n-j})\|_{\L(X_{\alpha},X_0)}^2 \, \| g_j\|^2_{\gamma(H,X_\alpha)} \, ds \Big )^{p/2} .
		\end{align}
		By \eqref{Ineq:Fractional powers of difference EE and rational}, we get that $\|A^{\beta} (\exp{-\tau(n-j)A} - R_\tau^{n-j})\|_{\L(X_\alpha,X_0)} \lesssim \tau^{-(\beta-\alpha)} (n-j)^{-1-(\beta-\alpha)}$ and thus \eqref{Eq: EE converges to mild solution new I2} gives that 
\[\|I_{2,n}(t)\|^p_{L^p(\Omega)} \lesssim \tau^{p(\frac12-(\beta-\alpha))} \E \Big ( \sum_{j=0}^{n-1} \frac1{(n-j)^{2(1+\beta-\alpha)}} \| g_j\|^2_{\gamma(H,X_\alpha)} \, ds \Big )^{p/2} .\]

Finally, applying the discrete case of Minkowski's convolution inequality, we can conclude that
\begin{align*}
\sum_{n=1}^{N-1} \|A^\beta(y-Y_n)\|^p_{L^p((t_n,t_{n+1})\times \Omega;X_0)} &\leq \sum_{k=1}^3 \sum_{n=0}^{N-1} \int_{t_n}^{t_{n+1}} \|I_{k,n}(t)\|_{L^p(\Omega)}^p dt
\\ & \lesssim \tau^{p(\frac12-(\beta-\alpha))} \E\sum_{n=0}^{N-1} \tau \| g_n\|^p_{\gamma(H,X_\alpha)} . \qedhere
\end{align*}
\end{proof}

\begin{remark}
Proposition \ref{prop:Convergece of rational scheme regular G} seems to be new in this generality. Under the assumption that $A$ has deterministic maximal $L^p$-regularity and $0\in \rho(A)$, a similar result was shown in \cite[Theorem 4.1]{li-Xie-stability} for $\alpha=\beta=0$ and for the implicit Euler scheme with an entirely different method. 
\end{remark}

		\begin{theorem}\label{DSMR implies SMR shift}
		Suppose that Assumption \ref{assum:mainDSMR} holds. Let $T\in(0,\infty]$ and $p \in [2,\infty)$.
If $R$ has discrete stochastic $\ell^p$-maximal regularity on $(0,T)$, then $A$ has stochastic maximal $L^p$-regularity on $(0,T)$.
		\end{theorem}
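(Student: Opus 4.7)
The plan is to pass from the discrete bound \eqref{Ineq:DSMR definition} to the continuous inequality \eqref{Ineq: SMR definition} by combining weak compactness in $L^p((0,T)\times\Omega; X_0)$ with the strong convergence result of Proposition \ref{prop:Convergece of rational scheme regular G} and the closedness of $A$ on $X_0$.

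First I would reduce to $g$ of the piecewise-constant form $\sum_{n=0}^{N-1} g_n \one_{[t_n, t_{n+1})}$ with $g_n \in L^p(\Omega, \F_{t_n}; \gamma(H, X_{1/2}))$, since such processes are dense in $L^p_{\mathbb F}((0,T)\times\Omega; \gamma(H, X_{1/2}))$. The extension to general $g$ will follow from the same weak-compactness/closedness mechanism applied to a sequence of piecewise-constant approximants.

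Fix such a $g$ and a sequence $\tau_k\to 0$ of admissible stepsizes, and set $\bar Y^\tau(t) \coloneq Y_n$ for $t \in [t_n, t_{n+1})$. The identity $\|A\bar Y^{\tau_k}\|_{L^p((0,T)\times\Omega;X_0)} = \|AY^{\tau_k}\|_{L^p(\Omega; \ell^p_{\tau_k}(X_0))}$ combined with the $\DSMR$ hypothesis yields a uniform-in-$k$ bound
\begin{equation*}
\|A\bar Y^{\tau_k}\|_{L^p((0,T)\times\Omega; X_0)} \leq C^R_{\DSMR(p,T)} \|g\|_{L^p((0,T)\times\Omega; \gamma(H,X_{1/2}))}.
\end{equation*}
Simultaneously, Proposition \ref{prop:Convergece of rational scheme regular G} applied with $\alpha=\beta=0$ (and the embedding $X_{1/2}\hookrightarrow X_0$ used to dominate the right-hand side) gives the strong convergence $\bar Y^{\tau_k} \to y$ in $L^p((0,T)\times\Omega; X_0)$ at rate $\tau_k^{1/2}$.

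Since $X_0$ is UMD and therefore reflexive, so is $L^p((0,T)\times\Omega; X_0)$ for $p\in (1,\infty)$. The uniform bound above allows me to pass to a further subsequence along which $A\bar Y^{\tau_k} \rightharpoonup z$ weakly in $L^p((0,T)\times\Omega; X_0)$. The closedness of $A$ on $X_0$ promotes to closedness, hence weak closedness, of the induced operator on $L^p((0,T)\times\Omega; X_0)$; combined with the strong (hence weak) limit $\bar Y^{\tau_k} \to y$ and the weak limit $A\bar Y^{\tau_k} \rightharpoonup z$, this identifies $y \in L^p((0,T)\times\Omega; X_1)$ a.s.\ and $Ay = z$. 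Weak lower semicontinuity of the norm then yields
\begin{equation*}
\|Ay\|_{L^p((0,T)\times\Omega; X_0)} \leq \liminf_{k\to\infty} \|A\bar Y^{\tau_k}\|_{L^p((0,T)\times\Omega; X_0)} \leq C^R_{\DSMR(p,T)} \|g\|_{L^p((0,T)\times\Omega; \gamma(H,X_{1/2}))},
\end{equation*}
which is \eqref{Ineq: SMR definition} for piecewise-constant data, with the constant bound $C^A_{\SMR(p,T)} \leq C^R_{\DSMR(p,T)}$ claimed in Theorem \ref{thm:mainequiv}. The main obstacle is the closedness/identification step, for which reflexivity of $L^p(X_0)$ and the promotion of closedness from $X_0$ to the Bochner space must be used carefully; the density extension to general $g$ is routine once the piecewise-constant case is settled.
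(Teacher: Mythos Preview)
Your argument is correct and recovers the same constant bound $C^A_{\SMR(p,T)}\le C^R_{\DSMR(p,T)}$, but it proceeds along a genuinely different line than the paper. The paper takes the piecewise-constant data $g$ to have values in $\gamma(H,X_1)$ (rather than $\gamma(H,X_{1/2})$) and applies Proposition~\ref{prop:Convergece of rational scheme regular G} with $\alpha=\beta=1$, which yields \emph{strong} convergence $A\bar Y^{\tau_k}\to Ay$ in $L^p((0,T)\times\Omega;X_0)$ at rate $\tau_k^{1/2}$ (with the $X_1$-norm of $g$ on the right). A direct triangle inequality against the $\DSMR$ bound then gives \eqref{Ineq: SMR definition} after letting $k\to\infty$, with no functional-analytic detour. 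Your route instead keeps $g$ at the natural $X_{1/2}$-level, applies the proposition with $\alpha=\beta=0$ to get $\bar Y^{\tau_k}\to y$ in $L^p(X_0)$, and then invokes reflexivity of $L^p((0,T)\times\Omega;X_0)$ (via UMD) together with the weak closedness of the graph of the pointwise extension of $A$ to identify the weak limit of $A\bar Y^{\tau_k}$. Both approaches need the same density extension at the end; the paper's is more elementary (no weak compactness), while yours avoids artificially raising the regularity of the data. One small point to make explicit: your refining stepsizes $\tau_k$ must divide the original stepsize of $g$ (e.g.\ $\tau_k=\tau_0/k$) so that $g$ remains piecewise constant on the $\tau_k$-grid, as required by Proposition~\ref{prop:Convergece of rational scheme regular G}.
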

	\begin{proof}
		Let $\tau_0>0$ be admissible and fixed and let $\pi_{\tau_0} \coloneq \{t_n^0=n \tau_0\colon \ 0\le n \le N_0 \}$ be a uniform partition of $[0,T]$. Let $g \coloneq \sum_{n=0}^{N-1} g_n \1_{[t_n^0,t_{n+1}^0)}$ be fixed but arbitrary, where each $g_n \in L^p(\Omega,\F_{t_n},\P; \gamma(H,D(A))$ and let $y(t)\coloneq\int_0^t e^{-(t-s)A} g(s) \dWHs$. Since the set of such processes $g$ is dense in $L^p_{\mathbb{F}} ((0,T)\times \Omega; \gamma(H,X_{1/2}))$, it suffices to show that
\begin{align}\label{eq:DSMRimpliesSMR}
\|A y \|_{L^p((0,T)\times\Omega;X_0)}
		\le C \|g\|_{L^p((0,T)\times\Omega;\gamma(H,X_{1/2}))}.
\end{align}	
For each integer $k \ge 1$ let $\tau_k= \tau_0/k$ and let $\pi_{\tau_k} \coloneq \{t_n^k = n \tau_k \colon 0 \le n \le N_k\}$ with $N_k = T/\tau$ if $T<\infty$ and $N_k=\infty$ otherwise. We can write $g= \sum_{n=0}^{N_k-1} g_n^k \1_{[t_n^k,t_{n+1}^k)}$, where each $g_n^k \in L^p(\Omega,\F_{t_n^k},\P; \gamma(H,X_1))$ and let $Y_n^k \coloneq \sum_{j=0}^{n-1} R_{\tau_k}^{n-j} \Delta_j I_{g}$, where $\Delta_j I_{g} \coloneq I_g(t_{j+1}^k) - I_g(t_{j}^k)$. 
 Proposition \ref{prop:Convergece of rational scheme regular G} gives that
	\begin{equation*}\label{Eq:DSMR implies SMR convergence of approximate solution for AG}
			\Big( \sum_{n=0}^{N_k-1} \int_{t_n}^{t_{n+1}} \E \| A Y_n^k - A y(t) \|^p_{X_0} dt \Big)^{1/p} \le C  \tau_k^{\frac 12} \|g\|_{L^p((0,T)\times\Omega;\gamma(H,X_1))}
		\end{equation*}
    and since $R$ has $\DSMR(p,T)$,
\begin{equation*}
   \Big( \sum_{n=0}^{N_k-1} \tau \E \| A Y_n^k\|^p_{X_0} \Big)^{1/p} \le C_{\text{DSMR}(p,T)}^R \|g\|_{L^p((0,T)\times\Omega;\gamma(H,X_{1/2}))}.
\end{equation*}
    Hence, by the triangle inequality, 
		\begin{align*}
		&	\| Ay \|_{L^p((0,T)\times\Omega;X_0)} \le C \tau_k^{\frac 12} \|g\|_{L^p((0,T)\times\Omega;\gamma(H,X_{1}))}
				+ C_{\DSMR(p,T)}^R \|g\|_{L^p((0,T)\times\Omega;\gamma(H,X_{1/2}))},
		\end{align*}
		and by letting $k \to \infty$, we obtain 
\eqref{eq:DSMRimpliesSMR}. This shows that $A$ has stochastic $L^p$-maximal regularity on $(0,T)$ with constant $C_{\text{SMR}(p,T)}^A \le C_{\DSMR(p,T)}^R$.
		
	\end{proof}

Observe that Theorem \ref{thm:mainequiv} follows by combining Theorems \ref{SMR implies EE DSMR}, \ref{EE DSMR implies r(z) DSMR Banach for r(infty)=0 shift}, and \ref{DSMR implies SMR shift}. In particular, this proves Theorem \ref{thm:introequiv}.

\section{Permanence properties}\label{sec:perm}
Before we move on to the proof of the maximal estimate of Theorem \ref{thm:maxest}, we discuss some simple properties of discrete stochastic maximal $\ell^p$-regularity, which we can now deduce from Theorem \ref{thm:mainequiv}. Similar results have been discussed in the continuous-time setting in the deterministic case in \cite{Dore} (also see \cite[Section 17.2.e]{Analysis3}), and in the stochastic case in \cite{AVstab,LoVer}. In Subsection \ref{ss:conttime}, we recall some of the required permanence properties in continuous time, and in  Subsection \ref{ss:discreteperm} derive its discrete analogues. Finally, in Subsection \ref{ss:weights} we present a weighted extrapolation result for $\DSMR$.

\subsection{The continuous-time setting}\label{ss:conttime}

In the next result, we first collect some continuous-time results which can be found in \cite{AVstab,LoVer}. The definition in the latter two papers differs from the one we use here, since we shifted the smoothness by $1/2$, and consider complex interpolation spaces instead of fractional powers. Therefore, we need to indicate the necessary changes in the proofs. 

Note that Definition \ref{def:contSMR} does not require Assumption \ref{assum:mainDSMR}, but it is enough to assume that $-A$ generates a strongly continuous analytic semigroup.
\begin{proposition}[Continuous-time setting]\label{prop:continuoustime}
Let $-A$ generate a strongly continuous analytic semigroup on $X_0$. Let $p\in [2, \infty)$ and $T\in (0,\infty]$. Suppose that $A$ has stochastic maximal $L^p$-regularity on $(0,T)$ with respect to a cylindrical Brownian motion on $H$ with $\text{dim}(H)\geq 1$. Then the following hold:
\begin{enumerate}[(1)]
\item\label{it1:continuoustime} If $T<\infty$ and $\lambda\in \C$, then $A+\lambda\in \SMR(p,T)$;
\item\label{it2:continuoustime} If $T=\infty$ and $\Re(\lambda)\geq 0$, then $A+\lambda\in \SMR(p,\infty)$;
\item\label{it3:continuoustime} If $T<\infty$ and $\lim_{t\to \infty}\|e^{-tA}\|_{\L(X_0)} = 0$, then  $A\in \SMR(p,\infty)$;
\item\label{it4:continuoustime} If $\wt{T}\in (0,\infty)$, then $A\in \SMR(p,\wt{T})$;
\item\label{it5:continuoustime} If $q\in (2, \infty)$, then $A\in \SMR(q,T)$;
\item\label{it6:continuoustime} If $\wt{H}$ is another Hilbert space, then $A\in \SMR(p,T)$ with respect to any cylindrical Brownian motion on $\wt{H}$.
\end{enumerate}
\end{proposition}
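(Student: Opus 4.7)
My plan is to reduce each statement to its counterpart in \cite{AVstab,LoVer}, accounting for the two modifications in our definition of $\SMR(p,T)$: the noise space uses the complex interpolation space $X_{1/2}=[X_0,X_1]_{1/2}$ rather than the fractional domain $D(A^{1/2})$, and the smoothness is shifted by $1/2$. The key observation I would exploit throughout is that $X_{1/2}$ depends only on the pair $(X_0,X_1)$, so it is invariant under perturbations $A\rightsquigarrow A+\lambda$ and, more generally, under any change of operator that preserves $D(A)=X_1$. This means the right-hand side of \eqref{Ineq: SMR definition} does not change when $A$ is perturbed within a class with the same domain.

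For items \eqref{it1:continuoustime}--\eqref{it3:continuoustime} I would use a lifting trick. If $y_\lambda$ solves the mild equation for $A+\lambda$ with data $g$, then $u\coloneq e^{\lambda\cdot}y_\lambda$ solves the mild equation for $A$ with data $e^{\lambda\cdot}g$. Applying $\SMR(p,T)$ for $A$ to $u$, multiplying back by $e^{-\lambda\cdot}$ and absorbing the zero-order term $\lambda y_\lambda$ via Proposition \ref{prop:BDGtype2}, will yield the claim. In \eqref{it1:continuoustime} the factor $|e^{\pm\lambda t}|\le e^{|\lambda|T}$ is absorbed into the constant; in \eqref{it2:continuoustime} the condition $\Re\lambda\ge0$ keeps all multipliers bounded by $1$. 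For \eqref{it3:continuoustime}, analyticity combined with the decay hypothesis gives exponential stability $\|e^{-tA}\|_{\L(X_0)}\le Me^{-\mu t}$ for some $\mu>0$; I would shift to $B\coloneq A-\mu/2$, apply \eqref{it1:continuoustime} to conclude $B\in\SMR(p,T')$ for every finite $T'$, and glue the estimates on consecutive intervals of length $T'$ using the exponential decay of $e^{-tB}$ to deduce $B\in\SMR(p,\infty)$. Item \eqref{it2:continuoustime} applied with $\lambda=\mu/2$ then yields $A\in\SMR(p,\infty)$.

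Items \eqref{it4:continuoustime} and \eqref{it6:continuoustime} I would handle as follows. For \eqref{it4:continuoustime}, restriction immediately covers $\wt T\le T$, while for $\wt T>T$ I would decompose $(0,\wt T)$ into at most $\lceil\wt T/T\rceil$ subintervals and iterate, shifting the initial time on each piece and using \eqref{it1:continuoustime} on the shifted equation. For \eqref{it6:continuoustime}, I would use that the $\gamma$-radonifying norm $\gamma(H,X_{1/2})$ depends on $H$ only through its dimension in the separable case and that any two separable infinite-dimensional Hilbert spaces are isometrically isomorphic; the finite-dimensional case reduces to the fact that the components of any cylindrical Brownian motion along an orthonormal basis are independent scalar Brownian motions, so one can compare them component by component.

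The hard part will be item \eqref{it5:continuoustime}, the extrapolation in $p$. Here I would follow \cite[Theorem 8.2]{LoVer} (see also \cite[Theorem 3.12]{agresti2025nonlinear}), which builds on the $\mathcal R$-boundedness of the family of stochastic convolution operators combined with the two-sided Burkholder--Davis--Gundy inequality (Proposition \ref{prop:BDGUMD}). The required modification is to replace the fractional-power scale by the complex interpolation scale $X_\theta$; the crucial operator bound $\|Ae^{-tA}\|_{\L(X_{1/2},X_0)}\lesssim t^{-1/2}$ follows by interpolating the endpoint estimates $\|Ae^{-tA}\|_{\L(X_1,X_0)}\lesssim t^{-1}$ and $\|Ae^{-tA}\|_{\L(X_0)}\lesssim t^{-1}$, which is exactly the abstract input needed by the proofs in the cited references.
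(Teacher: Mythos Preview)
Your overall plan mirrors the paper's: reduce each item to its counterpart in \cite{AVstab,LoVer} and account for the shift from $D(A^{1/2})$ to $X_{1/2}$. The sketches for \eqref{it1:continuoustime}, \eqref{it4:continuoustime}, and \eqref{it6:continuoustime} are fine. But your argument for \eqref{it2:continuoustime} has a genuine gap.

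In your lifting trick, $u=e^{\lambda\cdot}y_\lambda$ solves the mild equation for $A$ with data $e^{\lambda\cdot}g$, and you assert that for $\Re\lambda\ge0$ ``all multipliers are bounded by $1$''. This is false: while $|e^{-\lambda t}|\le 1$, the multiplier on $g$ satisfies $|e^{\lambda s}|=e^{s\Re\lambda}\ge 1$, and for $\Re\lambda>0$ it is \emph{unbounded} on $(0,\infty)$. Hence $e^{\lambda\cdot}g$ need not lie in $L^p(\R_+;\gamma(H,X_{1/2}))$, and you cannot apply $\SMR(p,\infty)$ for $A$ to $u$. The lifting works only for purely imaginary $\lambda$; the case $\Re\lambda>0$ needs a different mechanism (which is what the argument of \cite[Proposition~3.8]{AVstab} provides). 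Since your proof of \eqref{it3:continuoustime} concludes by invoking \eqref{it2:continuoustime} with $\lambda=\mu/2>0$, this gap propagates there as well.

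Two smaller issues in \eqref{it5:continuoustime}. First, the endpoint bounds are misstated: you wrote $\|Ae^{-tA}\|_{\L(X_1,X_0)}\lesssim t^{-1}$, but interpolating two $t^{-1}$ bounds yields $t^{-1}$, not $t^{-1/2}$. The correct endpoint is $\|Ae^{-tA}\|_{\L(X_1,X_0)}\lesssim 1$ (since $Ae^{-tA}x=e^{-tA}Ax$ on $D(A)$), which together with the $\L(X_0)$-bound $t^{-1}$ interpolates to $t^{-1/2}$. Second, \cite[Theorem~8.2]{LoVer} treats $T=\infty$; the paper handles finite $T$ by a shift: pick $\lambda\ge0$ with $\|e^{-t(\lambda+A)}\|\to0$, pass via \eqref{it2:continuoustime} and \eqref{it3:continuoustime} to $\SMR(p,\infty)$, extrapolate to $\SMR(q,\infty)$, then return via \eqref{it4:continuoustime} and \eqref{it1:continuoustime}. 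You do not mention this reduction.
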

\begin{proof}
\eqref{it1:continuoustime}, \eqref{it2:continuoustime}: This can be proved in a similar way as \cite[Proposition 3.8]{AVstab}.

\eqref{it3:continuoustime}: The proof of \cite[Theorem 5.2]{AVstab} extends to our setting.

\eqref{it4:continuoustime}: The same argument as in \cite[Proposition 5.1 and Corollary 5.3]{AVstab} can be applied.

\eqref{it5:continuoustime}: \ The $p$-independence for $T = \infty$ can be proved as in \cite[Theorem 8.2]{LoVer}. If $T<\infty$, then we can use a simple shift argument. Let $\lambda\geq 0$ be such that $\lim_{t\to \infty}\|e^{-t(\lambda+A)}\|_{\calL(X_0)}=0$. By \eqref{it2:continuoustime}, $\lambda+A\in \SMR(p,T)$, and thus $\lambda+A\in \SMR(p,\infty)$ by \eqref{it3:continuoustime}. Hence $\lambda+A\in \SMR(q,\infty)$. By \eqref{it4:continuoustime} and \eqref{it1:continuoustime} this implies $\lambda+A\in \SMR(q,T)$.

\eqref{it6:continuoustime}: This can be proved in a similar way as in \cite[Theorem 3.9]{AVstab}.
\end{proof}

We do not know whether the assumption that $-A$ generates a strongly continuous analytic semigroup on $X_0$ can be weakened. Some results in this direction can be found in \cite[Theorem 4.1]{AVstab}.

\subsection{The discrete setting}\label{ss:discreteperm}
The main result on permanence properties in the discrete case can be formulated as follows.
\begin{theorem}
Suppose that Assumption \ref{assum:mainDSMR} holds. Let $p\in [2, \infty)$ and $T\in (0,\infty]$. Suppose that $R$ has discrete stochastic maximal $\ell^p$-regularity on $(0,T)$ with respect to a cylindrical Brownian motion on $H$ with $\text{dim}(H)\geq 1$. Then the following hold:
\begin{enumerate}[(1)]
\item If $\wt{T}\in (0,\infty)$, then $R$ has discrete stochastic maximal $\ell^p$-regularity on $(0,\wt{T})$;
\item If $T<\infty$ and $\lim_{t\to \infty}\|e^{-tA}\|_{\L(X_0)} = 0$, then $R$ has discrete stochastic maximal $\ell^p$-regularity on $(0,\infty)$;
\item If $q\in (2, \infty)$, then $R$ has discrete stochastic maximal $\ell^q$-regularity on $(0,T)$;
\item If $\wt{H}$ is another Hilbert space, then $R$ has discrete stochastic maximal $\ell^p$-regularity on $(0,T)$ with respect to any cylindrical Brownian motion on $\wt{H}$.
\end{enumerate}
\end{theorem}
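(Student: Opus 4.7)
The plan is to reduce each claim to its continuous-time counterpart via the equivalence established in Theorem \ref{thm:mainequiv}. Since Assumption \ref{assum:mainDSMR} is in force, we have $\omega(A)<\pi/2$, so $-A$ generates a bounded analytic strongly continuous semigroup on $X_0$, and Proposition \ref{prop:continuoustime} applies. Moreover, the operator $A$ and the rational function $r$ defining $R$ are unchanged in each of the four items, so Assumption \ref{assum:mainDSMR} continues to hold after the relevant parameter (time horizon, integrability exponent, or noise Hilbert space) is varied.

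For each item I would proceed by the same three-step template. First, invoke Theorem \ref{thm:mainequiv} in the direction $\DSMR \Rightarrow \SMR$ to pass from $R \in \DSMR(p,T)$ to $A \in \SMR(p,T)$. Second, apply the appropriate continuous-time permanence result from Proposition \ref{prop:continuoustime}. Third, apply Theorem \ref{thm:mainequiv} once more, in the reverse direction $\SMR \Rightarrow \DSMR$, to return to the discrete setting. Explicitly, item (1) uses Proposition \ref{prop:continuoustime}\eqref{it4:continuoustime} (arbitrary finite horizon), item (2) uses Proposition \ref{prop:continuoustime}\eqref{it3:continuoustime} (extension to infinite horizon under semigroup decay), item (3) uses Proposition \ref{prop:continuoustime}\eqref{it5:continuoustime} ($p$-extrapolation), and item (4) uses Proposition \ref{prop:continuoustime}\eqref{it6:continuoustime} (independence from the cylindrical Brownian motion).

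There is no serious obstacle here: the deep work has already been carried out in Theorem \ref{thm:mainequiv} (which in turn rests on Lemma \ref{Lemma:Fractional powers of difference EE and rational}, Proposition \ref{prop:BDGtype2}, and the convergence estimate of Proposition \ref{prop:Convergece of rational scheme regular G}) and in the continuous-time permanence theory collected in Proposition \ref{prop:continuoustime}. The present statement is essentially a bookkeeping consequence of assembling these ingredients. The only point that warrants verification in each case is that Assumption \ref{assum:mainDSMR} is preserved under the parameter change at hand, and this is immediate since neither $A$ nor $r$ is altered.
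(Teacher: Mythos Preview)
Your proposal is correct and matches the paper's own proof exactly: the paper simply states that all properties are immediate from Proposition \ref{prop:continuoustime} and Theorem \ref{thm:mainequiv}, which is precisely the three-step template you describe.
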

\begin{proof}
All properties are immediate from Proposition \ref{prop:continuoustime} and Theorem \ref{thm:mainequiv}.
\end{proof}

\subsection{Weighted extrapolation}\label{ss:weights}
In the previous result, we saw that discrete stochastic maximal regularity is independent of $p$, $T$, and $H$. Below, we will show that it is also equivalent to a weighted variant. In the continuous-time setting, such weighted results form a central tool in the theory of evolution equations (see \cite{agresti2025nonlinear,PrussSim,wilke2023linear}). The following can be seen as the discrete stochastic analogue of \cite[Theorem 7.9]{AVstab} and \cite[Theorem 2.4]{PruSim04}.

To extend Definition \ref{def:DSMR} to the weighted setting, let $p\in (2, \infty)$, $\alpha\in (-1,\frac{p}{2}-1)$ and set $w_{\alpha}(t) = t^{\alpha}$. We say that $R\in \DSMR(p,\alpha,T)$ in case Definition \ref{def:DSMR} is satisfied with the estimate \eqref{Ineq:DSMR definition} replaced by
\begin{equation*}
		\| A Y \|_{L^p(\Omega;\ell^p_{\tau,w_{\alpha}}(X_{0}))}  \le C \|g\|_{L^p(\Omega;L^p (0,T,w_{\alpha};\gamma(H,X_{1/2})))}.
	\end{equation*}
Here, for $y = (y_n)_{n\ge0}$ in $X_0$, we set
\[\|y\|_{\ell^p_{\tau,w_{\alpha}}(X_0)} = \Big(\sum_{n\ge0} \tau w_{\alpha}((n+1)\tau) \|y_n\|_{X_0}^p\Big)^{1/p} = \Big(\sum_{n\ge0} \tau w_{\alpha}(t_{n+1}) \|y_n\|_{X_0}^p\Big)^{1/p},\]
and for $f\colon (0,T)\to Z$ we write
\[\|f\|_{L^p (0,T,w_{\alpha};Z)} = \Big(\int_{0}^T \|f(t)\|_Z^p w_{\alpha}(t) dt\Big)^{1/p}.\]
In a similar way, one can define $A\in \SMR(p,\alpha,T)$ by including weights in Definition \ref{def:contSMR}. 

The following result is a weighted extension of Theorem \ref{thm:mainequiv}. 
\begin{theorem}\label{thm:weighted}
Suppose that Assumption \ref{assum:mainDSMR} holds. Let $p\in (2, \infty)$, $\alpha\in (-1,\frac{p}{2}-1)$, and $T\in (0,\infty]$. Then the following are equivalent:
\begin{enumerate}[(1)]
\item $A$ has $\SMR(p,T)$; \label{thm:weighted item 1}
\item $R$ has $\DSMR(p,T)$; \label{thm:weighted item 2}
\item $A$ has $\SMR(p,\alpha,T)$; \label{thm:weighted item 3}
\item $R$ has $\DSMR(p,\alpha,T)$. \label{thm:weighted item 4}
\end{enumerate}
\end{theorem}

This result will be derived from a more general extrapolation result for more general kernels.
\begin{proposition}\label{prop:generalweightkernel}
Let $Y$ be a Banach space, and $Z$ be a Banach space with UMD and type $2$. Let $\Delta = \{(n,j) \in \N^2\colon 0\leq j<n\}$.
Let $p\in (2, \infty)$ and $\alpha\in (-1, \frac{p}{2}-1)$.
Suppose that $K\colon \Delta\to \calL(Y,Z)$ is such that, for some constant $M\ge 0$,
\begin{align*}
\|K(n,j)\|_{\L(Y,Z)}\leq \frac{M}{(\tau(n-j))^{1/2}}, \quad (n,j)\in \Delta.
\end{align*}
For an adapted step process $g\colon \R_+\times\Omega\to \gamma(H,Y)$, let $S_K g$ be the $L^p(\Omega;Y)$-valued sequence given by
\[(S_K g)_0 \coloneq 0 \quad \text{ and } \quad (S_K g)_n \coloneq \sum_{j=0}^{n-1} K(n,j) \Delta_j I_g,\quad n \ge 1.\]
Then the following are equivalent
\begin{enumerate}[(1)]
\item $S_K$ extends to a bounded operator with
\[\|S_K\|_{\calL(L^p_{\mathbb{F}} (\Omega ; L^p (0,T;\gamma(H,Y))), L^p(\Omega;\ell^p_{\tau}(Z)))}\leq C_1;\]
\item $S_K$ extends to a bounded operator with
\[\|S_K\|_{\calL(L^p_{\mathbb{F}} (\Omega ; L^p (0,T,w_{\alpha};\gamma(H,Y))),L^p(\Omega;\ell^p_{\tau,w_{\alpha}}(Z)))}\leq C_2.\]
\end{enumerate}
Moreover, there is a constant $C_{\alpha,p,Z}$ only depending on $\alpha, p, Z$ such that $C_1\leq C_2 + C_{\alpha, p,Z} M$ and $C_2\leq C_1 + C_{\alpha, p,Z} M$.
\end{proposition}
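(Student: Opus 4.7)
The plan is to establish both inequalities $C_1 \le C_2 + C_{\alpha,p,Z} M$ and $C_2 \le C_1 + C_{\alpha,p,Z} M$ by showing that the discrepancy between the weighted and unweighted boundedness of $S_K$ is controlled by the kernel constant $M$ alone. The argument is symmetric in the two directions, with the weight $w_\alpha$ playing the same role as $w_0 \equiv 1$, so it suffices to produce one splitting of $S_K$ into a ``main'' part that transfers identically between the two norms and an ``error'' part bounded purely in terms of $M$. The first step is to apply Proposition \ref{prop:BDGtype2}, valid since $Z$ is UMD with type $2$, together with the kernel bound to obtain, for each $n\ge 1$,
\begin{equation*}
\E \|(S_K g)_n\|_Z^p \lesssim_{p,Z} M^p \, \E\Big(\sum_{j=0}^{n-1} \frac{1}{\tau(n-j)} \|g\|^2_{L^2(t_j, t_{j+1}; \gamma(H,Y))}\Big)^{p/2}.
\end{equation*}
This reduces the vector-valued stochastic problem to a scalar, deterministic weighted estimate for the discrete fractional-type kernel $(n-j)^{-1}$.

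Next, I would decompose $S_K = S_K^{\mathrm{near}} + S_K^{\mathrm{far}}$, where the near part consists of indices $j$ with $t_{j+1} \ge t_n/2$ (so $n - j \le n/2$) and the far part consists of $j$ with $t_{j+1} < t_n/2$. For the near part, the weight ratio $w_\alpha(t_{n+1})/w_\alpha(s)$ for $s \in (t_j, t_{j+1})$ lies in an interval $[c_\alpha, C_\alpha]$ depending only on $\alpha$. Via the substitution $\tilde g(s) \coloneq w_\alpha(s)^{1/p} g(s)$, this implies that the weighted and unweighted $L^p$-norms of $S_K^{\mathrm{near}} g$ are equivalent up to a constant depending only on $\alpha$ and $p$. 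Hence the near-diagonal part automatically inherits the assumed bound from either side of the equivalence.

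For the far part, I would combine the BDG-based estimate above with a dyadic decomposition of $\{j : t_{j+1} < t_n/2\}$ into annular regions $I_k \coloneq \{j : n-j \in [2^k, 2^{k+1})\}$. On each annulus Hölder's inequality in time replaces the $L^2$-norm of $g$ by its $L^p_{w_\alpha}$-norm, yielding a geometric factor in $k$; summing in $k$ produces a bound of the form $\E\|(S_K^{\mathrm{far}} g)_n\|_Z^p \lesssim M^p t_n^{-1}\int_0^{t_n/2}\|g(s)\|^p_{\gamma(H,Y)}\, w_\alpha(s)\,w_\alpha(t_n)/w_\alpha(s) \cdot \phi(s,t_n)\,ds$ for a suitable kernel $\phi$. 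Integrating against $\tau w_\alpha(t_{n+1})$ in $n$ and applying Fubini then yields the bound $\|S_K^{\mathrm{far}} g\|_{L^p(\Omega;\ell^p_{\tau,w_\alpha}(Z))} \lesssim_{\alpha,p,Z} M \|g\|_{L^p(\Omega;L^p(0,T,w_\alpha;\gamma(H,Y)))}$, and the same argument with $w_\alpha$ replaced by $w_0$ gives the unweighted analog. The condition $\alpha\in(-1,p/2-1)$ is exactly what is required: $\alpha>-1$ ensures local integrability of $w_\alpha$ near the origin, while $\alpha < p/2-1$ (equivalently $-2\alpha/(p-2)>-1$) ensures integrability of the dual weight arising from the Hölder step.

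The main obstacle is this last far-part estimate: a naive bound using only the pointwise kernel inequality $(\tau(n-j))^{-1/2}\lesssim (\tau n)^{-1/2}$ leads to a logarithmically divergent sum in $n$, and the dyadic structure must be used carefully to extract a genuine power gain from the $(n-j)^{-1/2}$ factor. The sharpness of the exponent range $\alpha\in(-1,p/2-1)$ appears exactly at the endpoints where this gain disappears.
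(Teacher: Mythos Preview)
Your near/far strategy differs from the paper's approach and, as written, has gaps in both halves. For the near part, the substitution $\tilde g = w_\alpha^{1/p} g$ equates \emph{input} norms but does not transfer the operator: $S_K^{\mathrm{near}}$ does not commute with multiplication by $s^{\alpha/p}$, so there is no direct comparison between $\|S_K^{\mathrm{near}} g\|_{\ell^p_{\tau,w_\alpha}}$ and $\|S_K^{\mathrm{near}} \tilde g\|_{\ell^p_\tau}$. The discrepancy is itself a stochastic‐convolution operator carrying the extra factor $(t_{n+1}/s)^{\alpha/p}-1$, and bounding that operator is the whole content of the proof, not a consequence of the weight ratio being $O(1)$. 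For the far part your own diagnosis is correct, but the dyadic fix cannot help: when $n-j>n/2$ one has $n-j\in(n/2,n]$, so at most two of the annuli $\{n-j\in[2^k,2^{k+1})\}$ are nonempty for each $n$ and there is no geometric sum to exploit. Carrying out the H\"older step you describe yields precisely $\sum_{n} n^{-1}\E\|g\|_{L^p_{w_\alpha}(0,t_n/2)}^p$, which diverges logarithmically.

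The paper avoids the split entirely. It uses the exact identity
\[(S_K g)_n = t_{n+1}^{\beta}(S_K g_{-\beta})_n - (S_{K,\beta} g)_n,\qquad \beta=\alpha/p,\quad g_{-\beta}(s)=s^{-\beta}g(s),\]
where $S_{K,\beta}$ is the same discrete convolution but with the multiplier $(t_{n+1}/s)^{\beta}-1$ inserted in the stochastic integral. The first term converts weighted to unweighted exactly (this is why the stated inequality has constant $1$ in front of $C_1$), and the error term is bounded purely in terms of $M$ by Lemma~\ref{lem:weighted}: after Proposition~\ref{prop:BDGtype2} it reduces to the scalar estimate that $\int_0^t (t-s)^{-1}|(t/s)^\beta-1|^2\,ds$ is bounded uniformly in $t$, taken from \cite[Lemma~7.11]{AVstab}. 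The crucial point is that $(t/s)^\beta-1$ \emph{vanishes} at $s=t$, which cancels the $(t-s)^{-1}$ singularity; your near-part argument only uses that $(t/s)^\beta$ is bounded there, which is not enough, and no amount of work on the far part can compensate.
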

To prove this, we need the following.
\begin{lemma}\label{lem:weighted}
Suppose that the assumptions of Proposition \ref{prop:generalweightkernel} are satisfied. Let $\beta\in (-\infty, \frac12 - \frac1p)$. Then $S_{K, \beta}\colon  L^p_{\mathbb{F}} (\Omega ; L^p (0,T;\gamma(H,Y)))\to L^p(\Omega;\ell^p_{\tau}(Z))$ defined by
\[(S_{K,\beta} g)_0 \coloneq 0 \quad \text{ and } \quad (S_{K, \beta}g)_n \coloneq \sum_{j=0}^{n-1} K(n,j) \int_{t_j}^{t_{j+1}} [(t_{n+1}/s)^{\beta}-1] g(s) d W(s), \quad n \ge1, \]
is bounded of norm $\|S_{K, \beta}\|\leq C_{p,Z} C_{\beta} M$.
\end{lemma}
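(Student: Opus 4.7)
My plan is to absorb the stochastic integral via Burkholder-Davis-Gundy and reduce to a purely analytic inequality. Writing $(S_{K,\beta}g)_n = \int_0^{t_n} K(n,j(s))[(t_{n+1}/s)^\beta-1]\,g(s)\,dW(s)$ and applying Proposition \ref{prop:BDGtype2}, followed by the Minkowski embedding $L^p(\Omega;L^2)\hookrightarrow L^2(L^p(\Omega))$ (valid since $p\geq 2$), the kernel bound $\|K(n,j)\|_{\L(Y,Z)}\leq M/(\tau(n-j))^{1/2}$, and the elementary inequality $\tau(n-j(s))\geq \tfrac12(t_{n+1}-s)$ for $s\in(0,t_n]$, I obtain
\begin{equation*}
\|(S_{K,\beta}g)_n\|_{L^p(\Omega;Z)}^p \lesssim_{p,Z} M^p F(t_{n+1})^{p/2},
\end{equation*}
where $\psi(s)\coloneq\|g(s)\|_{L^p(\Omega;\gamma(H,Y))}$ and $F(t)\coloneq\int_0^t \tilde K(t,s)\psi^2(s)\,ds$, with the homogeneous-of-degree-$(-1)$ kernel $\tilde K(t,s)\coloneq((t/s)^\beta-1)^2/(t-s)$. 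The task reduces to the scalar inequality $\sum_{n\geq 1}\tau F(t_{n+1})^{p/2}\leq C_\beta^p\|\psi\|_{L^p}^p$.

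\textbf{Hardy-Littlewood-Pólya plus a discrete comparison.} The homogeneity of $\tilde K$ combined with the substitution $s=tu$ and Minkowski's integral inequality (the classical Hardy-Littlewood-Pólya argument) yields the continuous bound
\begin{equation*}
\|F\|_{L^{p/2}(dt)} \leq C_\beta \|\psi\|_{L^p}^2, \qquad C_\beta \coloneq \int_0^1 \frac{(u^{-\beta}-1)^2}{1-u}\, u^{-2/p}\,du.
\end{equation*}
This integral is finite exactly when $\beta<\tfrac12-\tfrac1p$ (the integrand is $\asymp u^{-2\beta-2/p}$ as $u\to 0^+$ and $\asymp 1-u$ as $u\to 1^-$), matching the hypothesis. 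I then pass from the $L^{p/2}$ bound on $F$ to the discrete sum via a quasi-monotonicity: for every $n\geq 1$ and every $T\in[t_{n+1},t_{n+2}]$ one has $F(T)\geq c_0 F(t_{n+1})$ for some $c_0=c_0(\beta)>0$; granted this,
\begin{equation*}
\tau F(t_{n+1})^{p/2}\leq c_0^{-p/2}\int_{t_{n+1}}^{t_{n+2}}F(T)^{p/2}\,dT,
\end{equation*}
and summing in $n\geq 1$ bounds the desired sum by $c_0^{-p/2}\|F\|_{L^{p/2}}^{p/2}\leq (c_0^{-1}C_\beta)^{p/2}\|\psi\|_{L^p}^p$.

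\textbf{Main obstacle.} The quasi-monotonicity follows from the pointwise kernel comparison $\tilde K(T,s)\geq c_0\tilde K(t_{n+1},s)$ for $s\in(0,t_{n+1})$, and this is the main technical step. Since $n\geq 1$ forces $T/t_{n+1}\in[1,3/2]$, I split the argument into two regimes. For $s\leq t_{n+1}/2$ both factors $(T/s)^\beta$ and $(t_{n+1}/s)^\beta$ are bounded away from $1$, and the ratio $\tilde K(T,s)/\tilde K(t_{n+1},s)$ is comparable to $(T/t_{n+1})^{2\beta-1}$, uniformly bounded below. For $s\in(t_{n+1}/2,t_{n+1})$ a first-order Taylor expansion of $(1+x)^\beta$ at $x=0$ shows that as $s\to t_{n+1}^-$ the blow-up of the squared ratio $((T/s)^\beta-1)^2/((t_{n+1}/s)^\beta-1)^2\sim((T-s)/(t_{n+1}-s))^2$ exactly compensates the decay of the factor $(t_{n+1}-s)/(T-s)$, leaving a lower bound by a positive constant depending only on $\beta$; the case $\beta<0$ is analogous, with $1-(T/s)^\beta$ playing the role of $(T/s)^\beta-1$.
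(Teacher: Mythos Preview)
Your overall strategy is correct and mirrors the paper's: apply BDG, use the kernel bound, compare the discrete kernel to its continuous analogue pointwise, and finish with the Hardy--Littlewood--P\'olya estimate for the homogeneous kernel $\tilde K(t,s)=\frac{((t/s)^\beta-1)^2}{t-s}$. Two points deserve comment.

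\emph{The Minkowski step is misdirected and unnecessary.} You write ``$L^p(\Omega;L^2)\hookrightarrow L^2(L^p(\Omega))$'', which is the wrong direction (that embedding fails for $p>2$). What you actually \emph{use} is the reverse, $\|h\|_{L^p_\omega L^2_s}\le \|h\|_{L^2_s L^p_\omega}$, which is Minkowski's inequality and is correct. More importantly, the paper skips this step entirely: it keeps the expectation outside and applies the Hardy estimate pointwise in $\omega$. Your detour through the deterministic $\psi$ is valid but superfluous.

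\emph{You create your own ``main obstacle''.} After BDG the paper works with $\frac{|(t_{n+1}/s)^\beta-1|^2}{t_{n+1}-t_{j+1}}$, keeping $t_{j+1}$ rather than $s$ in the denominator. Then for $t\in(t_{n+1},t_{n+2})$ and $s\in(t_j,t_{j+1})$ the comparison
\[
\frac{|(t_{n+1}/s)^\beta-1|^2}{t_{n+1}-t_{j+1}} \le \frac{3\,|(t/s)^\beta-1|^2}{t-s}
\]
is truly elementary: the numerators satisfy $|(t_{n+1}/s)^\beta-1|\le |(t/s)^\beta-1|$ by monotonicity (for either sign of $\beta$), and $t-s<t_{n+2}-t_j=\tau(n+2-j)\le 3\tau(n-j)=3(t_{n+1}-t_{j+1})$. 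You instead first pass from $\tau(n-j)$ to $\tfrac12(t_{n+1}-s)$, obtaining $\tilde K(t_{n+1},s)$, and then must prove the quasi-monotonicity $\tilde K(T,s)\ge c_0\,\tilde K(t_{n+1},s)$. This is true (for $b/a\in[1,3/2]$ one can show $\frac{\rho(b)}{\rho(a)}\ge (b/a)^{\beta-1}$ with $\rho(x)=\frac{x^\beta-1}{x-1}$, using that $x\mapsto x^{1-\beta}\rho(x)=1-\frac{x^{1-\beta}-1}{x-1}$ is monotone), but your two-regime sketch is asymptotic rather than uniform, and you would still need to supply the details for moderate $a$ and for the full range $\beta\in(-\infty,\tfrac12-\tfrac1p)$. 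The paper's route avoids this calculus entirely.
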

\begin{proof}
By Proposition \ref{prop:BDGtype2}
\begin{align*}
\E\|(S_{K, \beta}g)_n\|^p_Z& \leq C_{p,Z}^p \E \Big(\sum_{j=0}^{n-1} \int_{t_j}^{t_{j+1}} \|K(n,j) [(t_{n+1}/s)^{\beta}-1] g(s)\|_{\gamma(H,Z)}^2 ds\Big)^{p/2}
\\ & \leq C_{p,Z}^p M^p \E \Big(\sum_{j=0}^{n-1} \int_{t_j}^{t_{j+1}} \frac{1}{t_{n+1}-t_{j+1}}|(t_{n+1}/s)^{\beta}-1|^2 \|g(s)\|_{\gamma(H,Y)}^2 ds\Big)^{p/2}.
\end{align*}
It is elementary to check that for any $t\in (t_{n+1}, t_{n+2})$ and $s\in (t_j, t_{j+1})$,
\begin{equation}\label{ineq:elementary estimate for weighted extrapolation}
    \frac{1}{t_{n+1}-t_{j+1}}|(t_{n+1}/s)^{\beta}-1|^2\leq \frac{3}{t-s}|(t/s)^{\beta}-1|^2.
\end{equation}
Therefore, since $(S_{K, \beta}g)_0 = 0$,
\begin{align*}
\sum_{n\geq 0} \tau \E\|(S_{K, \beta}g)_n\|^p_Z &= \sum_{n\geq 1} \int_{t_{n+1}}^{t_{n+2}} \E\|(S_{K, \beta}g)_n\|^p_Z dt
\\ & \leq 3^{p/2} (C_{p,Z})^p M^p \int_{0}^\infty \E \Big(\int_{0}^{t} \frac{1}{t-s}|(t/s)^{\beta}-1|^2 \|g(s)\|_{\gamma(H,Y)}^2 ds\Big)^{p/2} dt
\\ & \leq 3^{p/2} (C_{p,Z})^p M^p C_{p,\beta}^p \|g\|_{L^p(\Omega ; L^p (0,T;\gamma(H,Y)))}^p,
 \end{align*}
where in the last step we used an estimate of the proof of \cite[Lemma 7.11]{AVstab}.
\end{proof}
By the above lemma, the proof of the proposition follows from the identity
\[(S_K g)_n = t_{n+1}^{\beta} (S_K g_{-\beta})_n - (S_{K,\beta} g)_n,\]
where $\beta = \alpha/p$, $g_{-\beta}(s) = s^{-\beta} g(s)$. Since the argument is almost identical to \cite[Theorem 7.10]{AVstab}, the details are left to the reader.
\qed 

\begin{proof}[Proof of Theorem \ref{thm:weighted}]
\eqref{thm:weighted item 2} $\Leftrightarrow$ \eqref{thm:weighted item 4}: It remains to observe that by \eqref{Ineq:Fractional powers of rational} and an interpolation argument, the kernel $K(n,j)\coloneq  A R_{\tau}^{n-j}$ satisfies $\|K(n,j)\|_{\calL(X_{1/2}, X_0)}\leq \frac{M}{(\tau(n-j))^{1/2}}$ for every $n>j \ge 0$. 

\eqref{thm:weighted item 1} $\Leftrightarrow$ \eqref{thm:weighted item 3}: This can be proved as in \cite[Theorem 7.9]{AVstab} by shifting the space regularity by $1/2$. 

\eqref{thm:weighted item 1} $\Leftrightarrow$ \eqref{thm:weighted item 2}: This is the content of Theorem \ref{thm:mainequiv}. 
\end{proof}

\begin{remark}[Quasi-uniform partitions]
    Theorem \ref{thm:weighted} extends to quasi-uniform partitions $\pi$ given by variable time-steps $(\tau_n)_{n=1}^N$ satisfying
    $	 \mu^{-1} \le \frac{\tau_{\max}}{\tau_{\min}} \le \mu $. In this case the weighted norm is given by $\|y\|^p_{\ell^p_{\pi,w_{\alpha}}(X_0)} =\sum_{n\ge0} \tau_{n+1} w_{\alpha}(t_{n+1}) \|y_n\|_{X_0}^p$.  Indeed, the unweighted equivalence \eqref{thm:weighted item 1} $\Leftrightarrow$ \eqref{thm:weighted item 2} is established in Remark \ref{remark:quasi uniform 1}, whereas the equivalence \eqref{thm:weighted item 1} $\Leftrightarrow$ \eqref{thm:weighted item 3} follows from the continuous-time setting (\cite[Theorem 7.9]{AVstab}). In order to prove the equivalence \eqref{thm:weighted item 2} $\Leftrightarrow$ \eqref{thm:weighted item 4} it suffices to observe that the kernel estimate $\|AR_\pi^{n,j}\|_{\calL(X_{1/2}, X_0)}\leq M(t_n-t_j)^{-1/2}$ holds (see Remark \ref{remark:quasi uniform 1}) and that the proofs of Proposition \ref{prop:generalweightkernel} and Lemma \ref{lem:weighted} extend mutatis mutandis to the quasi-uniform setting. The only significant modification is  that the elementary estimate \eqref{ineq:elementary estimate for weighted extrapolation} of Lemma \ref{lem:weighted} requires the constant $C_\mu = 1+2\mu$ instead of $3$.  
\end{remark}

\section{$\mathcal{R}$-boundedness of discrete stochastic convolutions}\label{sec:Rbdd}
One of the ingredients in the proof of the maximal estimate in Theorem \ref{thm:maxest} is an $\mathcal{R}$-boundedness result for discrete stochastic convolutions. Here, the prefix $\mathcal{R}$ does not refer to the scheme, and therefore we have chosen to use a calligraphic letter instead. $\mathcal{R}$-boundedness plays a crucial role in vector-valued harmonic and stochastic analysis, and the letter $\mathcal{R}$ refers to Rademacher or random. For an overview of $\mathcal{R}$-boundedness and its role in analysis, the reader is referred to \cite[Chapter 8]{hytonen2017analysis}.

\subsection{Definitions}\label{ss:defRbdd}

\newcommand{\cT}{\mathcal{T}}

Let $(r_n)_{n\ge 1}$ be a Rademacher sequence, i.e. $\P(r_n = +1) = \P(r_n=-1) = 1/2$, and the random variables $(r_n)_{n\ge 1}$ are independent.
Let $Y$ and $Z$ be Banach spaces. A family of operators $\cT\subseteq\calL(Y,Z)$ is said to be {\em $\mathcal{R}$-bounded} if there exists a constant $C\ge 0$ such that for all finite sequences $(T_n)_{n=1}^N$ in $\cT$ and $(y_n)_{n=1}^N$ in $Y$,
\begin{equation*}
\Big\|\sum_{n=1}^N r_n T_n y_n\Big\|_{L^2(\Omega;Z)} \le C\Big\|\sum_{n=1}^N r_n y_n\Big\|_{L^2(\Omega;Y)}.
\end{equation*}
The least admissible constant in the above estimate is called the {\em $\mathcal{R}$-bound} of $\cT$
and is denoted by $\mathcal{R}(\cT)$.

\subsection{Main $\mathcal{R}$-boundedness result}\label{ss:mainRbdd}
In the continuous-time setting $\mathcal{R}$-boundedness of stochastic convolutions was obtained in \cite{NVWSMR, NVW11} in order to establish stochastic maximal $L^p$-regularity. These methods were extended to a discrete setting for implicit Euler in \cite{li-Xie-stability}, to establish discrete stochastic maximal regularity.

Unless stated otherwise, in the rest of this section $X_0$ is assumed to be isomorphic to a closed subspace of $L^q(\mathcal{O})$ with $(\mathcal{O}, \Sigma,\mu)$ a $\sigma$-finite measure space. Given a stepsize $\tau >0$, $\mathcal K_{\tau}$ denotes the set of all sequences $k = (k_n)_{n\ge 1}$ such that $k_n \to 0$ and
 $$\sum_{n \ge 1} \sqrt {n \tau } \, |k_{n+1}-k_n| \le 1.$$
 For $k =(k_n)_{n\ge1} \in \mathcal K_{\tau}$ and an elementary adapted process $g\colon [0,\infty) \times \Omega\to L^q(\mathcal O;H)$ we define the process $I^{\tau}(k)g \colon \N \times \Omega \to L^q(\mathcal O)$ via
 $$(I^{\tau}(k)g)_0 \coloneq 0 \quad \text{and } \quad (I^{\tau}(k)g)_n \coloneq \sum_{j=0}^{n-1} k_{n-j} \Delta_j I_g, \quad n \ge 1, $$
where we recall that $I_g(t) = \int_0^t g d W$ and $\Delta_j I_g = I_g(t_{j+1}) - I_g(t_{j})$.
Finally, let $\mathcal{I}^{\tau} \coloneq \{I^{\tau}(k)\colon  k\in \mathcal{K}_{\tau}\}$.

Recall that the notation for the function spaces with weights $w_{\alpha}(t) = t^{\alpha}$, was introduced in Subsection \ref{ss:weights}.
 
 \begin{theorem} \label{Thm: J is uniformly R-bdd}
 	Let $q\in [2, \infty)$. Let $p\in (2, \infty)$ and $\alpha\in (-1,\frac{p}{2}-1)$. In case $q=2$, we additionally allow $p=2$ and $\alpha=0$. Then there exists a constant $C_{p,q,\alpha}$ such that for all $\tau>0$ the family $$\mathcal I^{\tau} \subseteq \mathcal L( L^p_{\mathbb F}(\Omega;L^p(\R_+,w_{\alpha};L^q(\O;H))), L^p(\Omega; \ell^p_{\tau,w_{\alpha}}(L^q(\O))) )$$
 is $\mathcal{R}$-bounded by $C_{p,q,\alpha}$.
 \end{theorem}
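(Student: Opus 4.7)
The plan follows the continuous-time strategy of \cite{NVWSMR,NVW11}, adapted to the discrete timeline. First I would perform summation by parts. Setting $a_i := k_i-k_{i+1}$, the condition $k_n\to 0$ gives $k_n = \sum_{i\ge n}a_i$, and consequently
\[
(I^\tau(k)g)_n \;=\; \sum_{j=0}^{n-1} k_{n-j}\,\Delta_j I_g \;=\; \sum_{i\ge 1} a_i\,\bigl(U_i^{(\tau)}g\bigr)_n,
\qquad (U_i^{(\tau)}g)_n := I_g(t_n)-I_g(t_{(n-i)\vee 0}).
\]
The defining constraint $\sum_i \sqrt{i\tau}\,|a_i|\le 1$ then identifies $I^\tau(k) = \sum_i (a_i\sqrt{i\tau})\cdot (U_i^{(\tau)}/\sqrt{i\tau})$ as a norm-$\le 1$ element of the absolutely convex hull of $\{U_i^{(\tau)}/\sqrt{i\tau}:i\ge 1\}$. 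Since the absolutely convex hull of an $\mathcal{R}$-bounded set is itself $\mathcal{R}$-bounded with a comparable constant (\cite[Theorem~8.1.24]{hytonen2017analysis}), it suffices to show that the normalized family $\{U_i^{(\tau)}/\sqrt{i\tau}: i\ge 1\}$ is $\mathcal{R}$-bounded uniformly in $\tau$.

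In the Hilbert case $p=q=2$ this is immediate: Itô's isometry combined with a straightforward interchange of summation and integration yields $\|U_i^{(\tau)}g\|_Z^2 \le i\tau\,\|g\|_Y^2$, and $\mathcal{R}$-boundedness coincides with norm-boundedness in Hilbert spaces. In the general case I would work with arbitrary finite Rademacher averages. Independent Rademachers $(r_m)$ can be adjoined to the filtration without disturbing the Brownian motion, and the two-sided Burkholder-Davis-Gundy inequality in $L^q$ (Proposition~\ref{prop:BDGUMD}) applied conditionally gives, for each $n$,
\[
\E_W\Big\|\sum_m r_m (U_{i_m}^{(\tau)}g^{(m)})_n/\sqrt{i_m\tau}\Big\|_{L^q}^p
\eqsim_{p,q}
\E_W\Big\|\sum_m r_m g^{(m)}\,\one_{[t_{(n-i_m)\vee 0},t_n]}/\sqrt{i_m\tau}\Big\|_{L^q(\mathcal O;L^2(\R_+;H))}^p.
\]

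Applying Khintchine–Kahane fibrewise in $\mathcal O$ (exploiting that $L^2(\R_+;H)$ is Hilbert) together with Kahane's exchange of Rademacher $L^p$- and $L^q$-norms reduces the desired $\mathcal{R}$-bound to the deterministic weighted discrete inequality
\[
\sum_n \tau\,w_\alpha(t_{n+1}) \Big\|\sum_m A_{i_m} h^{(m)}(n,\cdot)\Big\|_{L^{q/2}(\mathcal O)}^{p/2}
\;\lesssim\;
\int_0^\infty w_\alpha(t) \Big\|\sum_m h^{(m)}(t,\cdot)\Big\|_{L^{q/2}(\mathcal O)}^{p/2}\,dt,
\]
where $h^{(m)}:=\|g^{(m)}\|_H^2$ and $A_i h(n,x) := (i\tau)^{-1}\int_{t_{(n-i)\vee 0}}^{t_n} h(s,x)\,ds$ is a discrete moving average. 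The right-hand side is, up to constants, $\E_r\|\sum_m r_m g^{(m)}\|_Y^p$ after a second Khintchine–Kahane computation performed on the $g^{(m)}$'s.

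The last inequality is the main technical obstacle. Each $A_i$ is a positive averaging operator dominated pointwise by a Hardy–Littlewood-type maximal function on the grid $\tau\N$, and the assumption $-1<\alpha<\tfrac{p}{2}-1$ places $w_\alpha$ in the Muckenhoupt class $A_{p/2}$, which yields weighted boundedness of that maximal function on $L^{p/2}(w_\alpha)$. The difficulty is that the window sizes $(i_m)$ vary with $m$, so a naive domination of $\sum_m A_{i_m}h^{(m)}$ by a maximal function of $\sum_m h^{(m)}$ fails; one has to exploit positivity and apply a vector-valued Fefferman–Stein type argument for the family $(A_i)_{i\ge 1}$. An alternative route, which I would also explore, is to compare the discrete kernel $(k_n)$ with the continuous step-function kernel $\tilde k(t)=k_{\lceil t/\tau\rceil}$, for which $\int_0^\infty \sqrt t\,|d\tilde k|(t)\le 1$, and invoke the continuous-time $\mathcal{R}$-boundedness theorem of \cite{NVW11}; the error incurred in replacing the continuous time $t$ by the grid point $t_n$ in the weighted $L^p$-norm would then be controlled by auxiliary BDG estimates on intervals of length $\tau$, giving an $\mathcal{R}$-bounded correction of the same type.
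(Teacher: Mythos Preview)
Your approach matches the paper's proof: summation by parts reduces $\mathcal I^\tau$ to the absolutely convex hull of the normalized windowed increments $U_i^{(\tau)}/\sqrt{i\tau}$ (the paper's $J^\tau(i)$), and the two-sided BDG inequality combined with Kahane--Khintchine reduces their $\mathcal R$-boundedness to exactly the deterministic weighted $L^{p/2}(L^{q/2})$ inequality you isolate. For the final step the paper does not need a new vector-valued Fefferman--Stein argument: it passes from the grid to continuous time via the elementary domination $A_i h(n)\le 2\,T^*((i+1)\tau)h(t)$ for $t\in[t_n,t_{n+1})$, and then applies the known bound $\big\|\sum_m T^*(\delta_m)f^{(m)}\big\|_{L^{r'}(w_\alpha;L^{s'})}\lesssim \big\|\sum_m f^{(m)}\big\|_{L^{r'}(w_\alpha;L^{s'})}$ for nonnegative $f^{(m)}$ (Lemma~\ref{lem:MaaNee}), which follows by duality from the weighted Hardy--Littlewood maximal inequality.
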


\begin{proof}
Let $g$ be elementary adapted. For $k = (k_n) \in \mathcal K_{\tau}$ writing
	$k_n = - \sum_{m=n}^\infty (k_{m+1}-k_m)$ gives
	\begin{align*}
		(I^{\tau}(k) g)_n&= - \sum_{j=0}^\infty \sum_{m=1}^\infty (k_{m+1} -k_m) \1_{j \le n-1} \1_{n-j \le m } \Delta_j I_g
		\\
		& = -\sum_{m=1}^\infty \sqrt m (k_{m+1}-k_m) \sum_{j=0}^\infty \frac{1}{\sqrt m} \1_{j \le n-1} \1_{j \ge n-m} \Delta_j I_g
		\\
		&= -\sum_{m=1}^\infty \sqrt {m\tau} (k_{m+1}-k_m) \, (J^{\tau}(m)g)_n,
	\end{align*}
	where the process $J^{\tau}(m)g \colon \N \times \Omega \to L^q(\O)$ is given by $(J^{\tau}(m)g)_0\coloneq 0$ and for $n \ge1$,
	$$ (J^{\tau}(m)g)_n \coloneq \frac{1}{\sqrt {m\tau }} \sum_{j=0}^\infty \1_{j \le n-1} \1_{j \ge n-m} \Delta_j I_g = \sum_{j=0}^{n-1} k_{n-j}^{(m)} \Delta_j I_g,$$
	where $k_n^{(m)} \coloneq \frac{1}{\sqrt{m\tau}} \1_{1\le n \le m}$.

It follows from the above that $\mathcal{I}^{\tau}$ is contained in the closure of the absolute convex hull of $\mathcal{J}^{\tau}\coloneq  \{J^{\tau}(m)\colon  m \ge 1\}$. Therefore, by \cite[Proposition 8.1.21 and Theorem 8.1.22]{hytonen2017analysis} we see that it suffices to prove the $\mathcal{R}$-boundedness of $\mathcal{J}^{\tau}$.

Let $N \geq 1$, $a_1, \dots, a_N \in \N$ and $g^1, \dots, g^N \in L^p_{\mathbb F}(\Omega;L^p(\R_+,w_{\alpha};L^q(\mathcal{O}; H)))$ be arbitrary and fixed. Note that the sequence $f^n(s) \coloneq \|g^n(s)\|_{H}^2$ belongs to $L^{p/2}_{\mathbb F}(\Omega;L^{p/2}(\R_+,w_{\alpha};L^{q/2}(\mathcal{O})))$.
	
Let $(r_n)_{n=1}^N$ be a Rademacher sequence on a probability space $(\Omega_r, \F_r, \mathbb{P}_r)$. By Proposition \ref{prop:BDGUMD} applied pointwise with respect to $(\omega, m) \in \Omega_r \times \mathbb{N}$ in (i), and the Kahane-Khintchine inequalities (see \cite{hytonen2017analysis}) in (ii) and (iii), we may estimate as follows
		\begin{align*}
	 &
	\mathbb{E}_r \Big\| \sum_{n=1}^N r_n J(a_n) g^n \Big\|_{L^p(\Omega; \ell^p_{\tau,w_{\alpha}}(L^q(\mathcal{O})))}^p
	\\	
	& = \mathbb{E}_r \sum_{m \ge 1} \tau t_{m+1}^{\alpha}\,\E \Big\| \int_0^\infty \sum_{j=0}^\infty \sum_{n=1}^N \frac{r_n}{\sqrt{a_n\tau}} \1_{ [0 \vee (m - a_n),m-1]  }(j) g^n(s) \1_{[t_j,t_{j+1})}(s) \,  dW(s) \Big\|_{ L^q(\mathcal{O})}^p
	\\
 	&
	\overset{(\text i)}{\eqsim}_{p, q} \mathbb{E}_r  \sum_{m\geq 1} \tau t_{m+1}^{\alpha} \mathbb{E}  \Big\| s \mapsto  \sum_{j,n} \frac{r_n}{\sqrt{a_n \tau }} \1_{ [0 \vee (m - a_n),m-1]  }(j) g^n(s) \1_{[t_j,t_{j+1})}(s) \Big \|_{L^q(\O;L^2(\R_+;H))}^p
	\\
	&
	\overset{( \text{ii} )}{\eqsim}_{p,q} \mathbb{E} \sum_{m\geq 1} \tau t_{m+1}^{\alpha} \Big( \mathbb{E}_r \Big\| s \mapsto \sum_{j,n} \frac{r_n}{\sqrt{a_n \tau }} \1_{ [0 \vee (m - a_n),m-1]  }(j) g^n(s) \1_{[t_j,t_{j+1})}(s) \Big\|_{L^q(\mathcal{O}; L^2(\R_+; H))}^q \Big)^{p/q}
	\\
	&
	= \mathbb{E} \sum_{m\geq 1}\tau t_{m+1}^{\alpha}\, \Big( \int_{\O} \E_r \Big\| s \mapsto \sum_{i,n} \frac{r_n}{\sqrt{a_n \tau }} \1_{ [0 \vee (m - a_n),m-1]  }(j) g^n(s) \1_{[t_j,t_{j+1})}(s) \Big\|_{L^2(\R_+; H)}^q d\mu \Big)^{p/q}
	\\
	&
	\overset{(\text{iii})}{\eqsim}_q \mathbb{E} \sum_{m\geq 1}\tau t_{m+1}^{\alpha}\, \Big( \int_{\O} \Big ( \E_r \Big\| s \mapsto \sum_{j,n} \frac{r_n}{\sqrt{a_n \tau }} \1_{ [0 \vee (m - a_n),m-1]  }(j) g^n(s) \1_{[t_j,t_{j+1})}(s) \Big\|_{L^2(\R_+;H)}^2 \Big)^{	q/2} d\mu  \Big)^{p/q}
	\\
& =  \mathbb{E} \sum_{m\geq 1}\tau t_{m+1}^{\alpha}\, \Big( \int_{\O} \Big ( \sum_{j,n} \int_{t_j}^{t_{j+1}} \frac{1}{a_n \tau} \1_{ [0 \vee (m - a_n),m-1]  }(j) \|g^n(s)\|_H^2 ds \Big)^{	q/2} d\mu  \Big)^{p/q}
\\	&	\eqsim_{\alpha} \mathbb{E} \sum_{m\geq 1} \int_{t_m}^{t_{m+1} }\, \Big( \int_{\O} \Big ( \sum_{n=1}^N T^*(a_n \tau) f^n (t_m)  \Big)^{	q/2} d\mu  \Big)^{p/q} t^{\alpha} dt
\\ & \leq 2^{p/2} \mathbb{E} \sum_{m\geq 1} \int_{t_m}^{t_{m+1} }\, \Big( \int_{\O} \Big ( \sum_{n=1}^N T^*((a_n+1) \tau) f^n (t)  \Big)^{	q/2} d\mu  \Big)^{p/q} t^{\alpha} dt
\\ &
	= 2^{p/2}\mathbb{E} \Big\| \sum_{n=1}^N T^*((a_n+1) \tau) f^n \Big\|_{L^{p/2}(\R_+,w_{\alpha};L^{q/2}(\mathcal{O}))}^{p/2},
	\end{align*}
where the operator $T^*(\delta)$ on $L^{p/2}(\R_+,w_{\alpha};L^{q/2}(\mathcal{O}))$ is the adjoint of the one defined in Lemma \ref{lem:MaaNee} below and is given by 
\[
T^*(\delta) f (t) \coloneq  \frac{1}{\delta}\int_{(t-\delta)\vee 0}^t f(s) ds.
\]
Moreover, we used the simple estimate $T^*(a_n \tau) f (t_m)\leq 2 T^*((a_n+1) \tau) f (t)$ for all $t\in [t_m, t_{m+1})$.

Note that $w_{\alpha}\in A_{p/2}$ (see \cite[Example 7.1.7]{Grafclassical}). By Lemma \ref{lem:MaaNee} with $r'=p/2$ and $s'=q/2$ we obtain
\begin{align*}
\Big\| \sum_{n=1}^N T^*((a_n+1) \tau) f^n \Big\|_{L^{p/2}(\R_+,w_{\alpha};L^{q/2}(\mathcal{O}))}^{p/2} & \lesssim_{p,q} \Big\| \sum_{n=1}^N f^n \Big\|_{L^{p/2}(\R_+,w_{\alpha};L^{q/2}(\mathcal{O}))}^{p/2}
\\ & \eqsim \E_r\Big\|\sum_{n=1}^N r_n g^n\Big\|_{L^p(\R_+,w_{\alpha}; L^q(\mathcal{O}; H))}^p,
\end{align*}
where the last step follows from reversing the computations involving the Kahane–Khintchine inequalities. Combining the estimates, gives the required $\mathcal{R}$-boundedness.
\end{proof}

In the previous proof we used a result of \cite[Section 3]{NVWSMR} for the Muckenhoupt weighted setting. For details on such weights, the reader is referred to \cite[Chapter 7]{Grafclassical}.
\begin{lemma} \label{lem:MaaNee}
For $\delta>0$ let $T(\delta)$ be the operator on $L^r(\R;L^s(\mathcal{O}))$ given by
\[T(\delta)f(t)\coloneq \frac{1}{\delta}\int_{t}^{t+\delta} f(\xi) d\xi,\]
where $r\in (1,\infty]$ and $s\in (1, \infty)$ (where $s=\infty$ is also allowed if $r=\infty$).
Let $\frac1r+\frac1{r'} = 1$, $\frac1{s}+\frac1{s'}=1$. Let $v$ be an $A_{r'}$-weight (with $v=1$ if $r=\infty$).
Then there is a constant $C$ only depending on $r, s$ and $[v]_{A_{r'}}$ such that for all $N\geq 1$, $f_1,
\ldots, f_N\in L^{r'}(\R,v;L^{s'}(\mathcal{O}))$ and $\delta_1,\ldots, \delta_N>0$,
\begin{align*}
\Big\| \sum_{n=1}^N T^*(\delta_n) f_n \Big\|_{L^{r'}(\R,v;L^{s'}(\mathcal{O}))} \leq C \Big\|
\sum_{n=1}^N f_n \Big\|_{L^{r'}(\R,v;L^{s'}(\mathcal{O}))}.
\end{align*}
\end{lemma}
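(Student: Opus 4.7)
My plan is to prove the lemma by a duality argument that reduces everything to the weighted boundedness of the Hardy--Littlewood maximal function $M$. The first step is a reduction to the scalar case with non-negative $f_n$. In the intended application (the proof of Theorem \ref{Thm: J is uniformly R-bdd}), the functions $f^n = \|g^n\|_H^2$ are already non-negative, and the $L^{s'}(\mathcal{O})$-valued extension can be recovered at the end via Rubio de Francia's weighted vector-valued extrapolation (see \cite[Theorem 9.5.3]{Grafclassical}).

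For the scalar estimate with $f_n \ge 0$ and $v \in A_{r'}$, I would dualize using $(L^{r'}(\R, v))^* = L^r(\R, v^{1-r})$ (with the unweighted pairing), which reduces the claim to bounding
\[\sum_n \int_\R g(t)\, T^*(\delta_n) f_n(t)\, dt = \sum_n \int_\R T(\delta_n) g(t)\, f_n(t)\, dt\]
uniformly over $g$ with $\|g\|_{L^r(v^{1-r})} \le 1$. The forward-average $T(\delta) g(t) = \tfrac{1}{\delta}\int_t^{t+\delta} g(s)\, ds$ is pointwise dominated by $Mg(t)$, so using $f_n \ge 0$ and Hölder's inequality,
\[\sum_n \int T(\delta_n) g \cdot f_n\, dt \le \int Mg(t) \sum_n f_n(t)\, dt \le \|Mg\|_{L^r(v^{1-r})} \Big\|\sum_n f_n\Big\|_{L^{r'}(v)}.\]
Since $v \in A_{r'}$ is equivalent to $v^{1-r} \in A_r$ (by duality of Muckenhoupt classes), the weighted maximal inequality yields $\|Mg\|_{L^r(v^{1-r})} \lesssim \|g\|_{L^r(v^{1-r})} \le 1$ with a constant depending only on $r$ and $[v]_{A_{r'}}$, which proves the scalar case. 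The edge case $r = \infty$, $r' = 1$, $v = 1$ is even simpler: $T^*(\delta)$ is a contraction on $L^1(\R)$ by Fubini, and the triangle inequality finishes.

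The $L^{s'}(\mathcal{O})$-valued extension I would handle via vector-valued Rubio de Francia extrapolation: the scalar weighted bound on $L^{r'}(\R, v)$, valid for every $v \in A_{r'}$, lifts to a bound on $L^{r'}(\R, v; L^{s'}(\mathcal{O}))$ because $T^*(\delta)$ acts only in the time variable and $L^{s'}(\mathcal{O})$ is a UMD Banach function space. The main technical obstacle I expect is precisely this last step: a direct duality argument in the $L^{s'}$-valued setting is subtle because, while the lattice inequality $|T^*(\delta) f| \le T^*(\delta) |f|$ lets one replace $f_n$ by $|f_n|$ on the left side of the inequality, it does \emph{not} preserve the right-hand side $\|\sum f_n\|_{L^{s'}}$. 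The Rubio de Francia machinery bypasses this by reducing to scalar weighted bounds with varying weights, avoiding any direct appeal to non-negativity in the vector-valued setting.
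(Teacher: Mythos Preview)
Your core mechanism --- pair with a dual function $g$, use the pointwise domination $T(\delta_n)g\le Mg$, and close with a weighted maximal inequality --- is exactly what underlies the paper's one-line proof, which invokes \cite[Lemma~3.3]{NVWSMR} together with the weighted Fefferman--Stein inequality (boundedness of $M$ on $L^r(\R,w;\ell^s)$ for $w\in A_r$). The difference is in the vector-valued step. The paper runs the duality computation \emph{directly} in $L^{r'}(\R,v;L^{s'}(\mathcal{O}))$: with $f_n\ge0$ (an implicit hypothesis here --- note the lemma is false otherwise, e.g.\ take $N=2$, $f_2=-f_1$, $\delta_1\ne\delta_2$) the same pairing argument goes through verbatim and terminates with $\|Mg\|_{L^r(\R,v^{1-r};L^s(\mathcal{O}))}$, which Fefferman--Stein controls. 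Your route --- prove the scalar case first, then lift via Rubio de Francia extrapolation --- also works (the extrapolation-for-pairs version, applied fibrewise in $\omega\in\mathcal{O}$), and is in fact one of the standard ways to \emph{derive} Fefferman--Stein from the scalar weighted bound; but it is a detour. In particular, the ``technical obstacle'' you flag dissolves once you carry the non-negativity assumption $f_n\ge0$ into the vector-valued setting from the start rather than trying to recover it after a scalar reduction. The UMD property of $L^{s'}(\mathcal{O})$ plays no role: classical extrapolation into $L^q$-valued spaces, or Fefferman--Stein itself, is enough.
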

To deduce the result from \cite[Lemma 3.3]{NVWSMR} it suffices to note that the Hardy--Littlewood maximal operator $M$ is bounded on $L^{r}(\R,v;\ell^{s})$ for all $v\in A_{r}$ (see \cite[Theorem 7.1.9 and Corollary 7.5.7]{Grafclassical}), where $s=\infty$ is allowed if $r=\infty$. These are the weighted versions of the Fefferman-Stein maximal estimates.

A related class of operators is also $\mathcal{R}$-bounded for similar reasons.
\begin{remark}\label{Temp: Rbdd for i>n}
	 For $k =(k_n)_{n\ge1} \in \mathcal K_{\tau}$ and elementary adapted process $g\colon \R_+\times \Omega\to L^q(\mathcal O;H)$ define the process $\tilde I^{\tau}(k)g \colon \N \times \Omega \to L^q(\mathcal O)$ via
	$$(\tilde I^{\tau}(k)g)_0 \coloneq 0 \quad \text{and } \quad (\tilde I^{\tau}(k)g)_n \coloneq \sum_{j\ge n} k_{j-n} \Delta_j I_g \quad n \ge 1 . $$
Then one can show that $\wt{\mathcal{I}}^{\tau} = \{\tilde I^{\tau}(k)\colon  k\in \mathcal K_{\tau}\}$ is $\mathcal{R}$-bounded by a constant only depending on $p$ and $q$. Indeed, the argument can be done in a similar way as we have seen in Theorem \ref{Thm: J is uniformly R-bdd}. One difference is that one needs to reverse the roles of $T$ and $T^*$.
\end{remark}

\begin{remark}\label{rem:otherBS}
The assertions of Theorem \ref{Thm: J is uniformly R-bdd} and Remark \ref{Temp: Rbdd for i>n} remain valid if $X_0$ is isomorphic to a closed subspace of $L^q(\mathcal{O})$ with $(\mathcal{O}, \Sigma,\mu)$ a $\sigma$-finite measure space. Here one needs to replace $L^q(\mathcal{O};H)$ by $\gamma(H,X_0)$. Indeed, after applying the isomorphism, one can reduce to $L^q(\mathcal{O})$.

The technique to prove Theorem \ref{Thm: J is uniformly R-bdd} originates from \cite{NVWSMR}, but was further analyzed in \cite{NVW11}. Combining \cite[Theorem 4.7]{NVWSMR} with the techniques in Theorem \ref{Thm: J is uniformly R-bdd} and Remark \ref{Temp: Rbdd for i>n} one can see that our results remain valid if $X_0$ is isomorphic to a Banach function space such that the $2$-concavification is a UMD space. In this way, the result follows for spaces such as $L^{q}(L^{r})$, Besov spaces $B^{s}_{q,r}$, and Triebel-Lizorkin spaces $F^{s}_{q,r}$.
\end{remark}

\subsection{Kernels for exponential schemes}\label{ss:kernelexp}
In this subsection, we  check that $k\in \mathcal{K}_\tau$ (as defined in Subsection \ref{ss:mainRbdd}) for sequences $k$ which will be needed in Section \ref{sec:discretemax}. Although the proofs are completely elementary, they are quite tedious, and we give the detailed arguments for the convenience of the reader.
The aim is to prove $\frac{k}{C}\in \mathcal{K}_\tau$, where $C>0$ is independent of $\tau$. In all examples, it will be obvious that $k_n\to 0$ as $n\to \infty$ and thus we only need to check that
$$\sum_{n \ge 1} \sqrt {n \tau } \, |k_{n+1}-k_n| \le C.$$
This shows that $I^{\tau}(k)\in \mathcal{I}^\tau$ and $\wt{I}^{\tau}(k)\in \wt{\mathcal{I}}^\tau$,
and both families are $\mathcal{R}$-bounded by Theorem \ref{Thm: J is uniformly R-bdd} and Remark \ref{Temp: Rbdd for i>n}.

In the proofs below, we use the following elementary facts:
\begin{lemma}
Let $\nu\in (0,\pi/2)$. Then,
\begin{align}\label{eq:meanvalue1exp}
|1- e^{-z}|& \leq |z|, \quad \text{for all $z\in \C$ with $\Re(z) \geq 0$},
\\ \label{eq:pullinginabs} |1-e^{-z}|& \leq M_{\nu}(1-e^{-|z|\cos(\arg{z})}), \quad  \text{for all $z\in \Sigma_{\nu}$.}
\end{align}
\end{lemma}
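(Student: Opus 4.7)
The plan is to handle each inequality separately with a direct elementary computation; no machinery is needed.

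For \eqref{eq:meanvalue1exp}, I would use the representation $1 - e^{-z} = \int_0^1 z e^{-sz}\,ds$ and the triangle inequality, noting that $|e^{-sz}| = e^{-s\Re z} \le 1$ whenever $\Re z \ge 0$ and $s \in [0,1]$. This yields $|1 - e^{-z}| \le |z|\int_0^1 e^{-s\Re z}\,ds \le |z|$, which is the claim.

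For \eqref{eq:pullinginabs}, I would split $z = x + iy$ with $x = |z|\cos(\arg z) > 0$ and $y = |z|\sin(\arg z)$; note that $z \in \Sigma_\nu$ with $\nu < \pi/2$ forces $|y| \le x \tan\nu$. Writing
\[
1 - e^{-z} = (1 - e^{-x}) + e^{-x}(1 - e^{-iy}),
\]
and using $|1 - e^{-iy}| = 2|\sin(y/2)| \le |y|$ together with $|y| \le x \tan\nu$ and $e^{-x}\le 1$, I obtain
\[
|1 - e^{-z}| \le (1 - e^{-x}) + x e^{-x} \tan\nu.
\]
Therefore, the inequality reduces to showing the real-variable bound $x e^{-x} \le 1 - e^{-x}$ for all $x \ge 0$, which follows by checking that $h(x) := (x+1)e^{-x} - 1$ satisfies $h(0) = 0$ and $h'(x) = -x e^{-x} \le 0$. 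Combining these gives
\[
|1 - e^{-z}| \le (1 + \tan\nu)(1 - e^{-x}),
\]
so \eqref{eq:pullinginabs} holds with $M_\nu := 1 + \tan\nu$.

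Neither step presents a real obstacle: the only minor point of care is verifying $x > 0$ on $\Sigma_\nu$ (so the factorization and the bound $|y| \le x\tan\nu$ are legitimate), and confirming that the estimate $xe^{-x} \le 1 - e^{-x}$ holds uniformly on $[0,\infty)$. The constant $M_\nu$ comes out explicitly as $1 + \tan\nu$, which blows up as $\nu \uparrow \pi/2$, consistent with the fact that \eqref{eq:pullinginabs} must fail on the imaginary axis.
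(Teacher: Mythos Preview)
Your proof of \eqref{eq:meanvalue1exp} is essentially identical to the paper's: both use the integral representation $1-e^{-z}=z\int_0^1 e^{-sz}\,ds$ and bound $|e^{-sz}|\le 1$. The paper additionally records the sharper intermediate inequality
\[
|1-e^{-z}|\le |z|\int_0^1 e^{-s\Re(z)}\,ds=\frac{|z|}{\Re(z)}\bigl(1-e^{-\Re(z)}\bigr),
\]
valid for $\Re(z)>0$.

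For \eqref{eq:pullinginabs} your argument is correct but takes a different route. The paper simply reuses the displayed inequality above: since $\Re(z)=|z|\cos(\arg z)$, the factor $|z|/\Re(z)=1/\cos(\arg z)\le 1/\cos\nu$ on $\Sigma_\nu$, giving the result immediately with $M_\nu=\sec\nu$. Your additive splitting $1-e^{-z}=(1-e^{-x})+e^{-x}(1-e^{-iy})$ together with the bound $xe^{-x}\le 1-e^{-x}$ is a perfectly valid alternative and yields $M_\nu=1+\tan\nu$. The paper's approach is slightly more economical (it recycles the computation from part one rather than introducing a new decomposition) and produces a smaller constant, since $1+\tan\nu=\sec\nu\,(\cos\nu+\sin\nu)\ge\sec\nu$ on $(0,\pi/2)$; but both constants blow up as $\nu\uparrow\pi/2$, as they must.
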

\begin{proof}
For real $z$, \eqref{eq:meanvalue1exp} can be proved by comparing derivatives. The complex case follows from the real case since
$|1- e^{-z}| \leq |z|\int_0^1 |e^{-tz}| dt = |z|\int_0^1 e^{-t \Re(z)} dt = \frac{|z|}{\Re(z)} |1- e^{-\Re(z)}|\leq |z|$.

At the same time, this gives \eqref{eq:pullinginabs}. Indeed, observe that $\Re(z) = |z| \cos(\arg(z))$ and that $\frac{|z|}{\Re(z)} = \frac{1}{\cos(\arg(z))}$. Thus, the result follows with $M_{\nu} = \frac{1}{\cos(\nu)}$.
\end{proof}

\begin{lemma}\label{lem:exp}
Let $\nu\in (0,\pi/2)$ and $\sigma\in (0,1/2)$. Let $k_n, \phi\colon (0,\infty)\times\Sigma_{\nu}\to \C$ be given by
\begin{align*}
k_n(\tau,\lambda) & = \lambda^{1/2} e^{-n\tau\lambda}, \ \ n\geq 1,
\\ \phi(\tau, \lambda) &= \sum_{j\geq 1} \frac{e^{-j\tau\lambda} - 1}{(\tau\lambda)^{\sigma}} a_j,
\end{align*}
where $|a_j|\leq b/j^{1+\sigma}$ for all $j\geq 1$, and $b\geq 0$ is a constant.
Then there are constants $C_{\nu}$ and $C_{\nu, \sigma}$ such that for all $\tau>0$ and $\lambda\in \Sigma_{\nu}$,
\[\text{both } \ \frac{k(\tau,\lambda)}{b C_{\nu}} \ \ \text{and} \ \ \frac{k(\tau, \lambda) \phi(\tau, \lambda)}{b C_{\nu,\sigma}} \ \ \text{define sequences in $\mathcal{K}_\tau$}.\]
\end{lemma}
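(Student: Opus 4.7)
Since in both cases the sequence has a factor $e^{-n\tau\lambda}$ with $\Re(\tau\lambda)\ge (\cos\nu)\tau|\lambda|>0$, we have $k_n\to 0$ as $n\to\infty$, so it suffices to verify the $\mathcal{K}_\tau$--summability estimate $\sum_{n\ge1}\sqrt{n\tau}\,|k_{n+1}-k_n|\le \text{const}$. Abbreviate $r=|\lambda|$, $c=\cos(\arg\lambda)\ge\cos\nu>0$, and $u=c\tau r$.

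For the first sequence, write $k_{n+1}-k_n=\lambda^{1/2}e^{-n\tau\lambda}(e^{-\tau\lambda}-1)$, so by \eqref{eq:pullinginabs},
\[
|k_{n+1}-k_n|\le M_\nu\,r^{1/2}\,e^{-nc\tau r}\,(1-e^{-c\tau r}).
\]
The key analytic ingredient is the Cauchy--Schwarz bound
\[
\sum_{n\ge1}\sqrt{n}\,e^{-nu}\le\Bigl(\sum_{n\ge1}n\,e^{-nu}\Bigr)^{1/2}\Bigl(\sum_{n\ge1}e^{-nu}\Bigr)^{1/2}=\frac{e^{-u}}{(1-e^{-u})^{3/2}},
\]
from which
\[
\sum_{n\ge1}\sqrt{n\tau}\,|k_{n+1}-k_n|\le M_\nu\sqrt{\tau r}\cdot\frac{e^{-u}(1-e^{-u})}{(1-e^{-u})^{3/2}}=\frac{M_\nu}{\sqrt{c}}\,\frac{\sqrt{u}\,e^{-u}}{\sqrt{1-e^{-u}}}.
\]
The function $u\mapsto \sqrt{u}\,e^{-u}/\sqrt{1-e^{-u}}$ is bounded on $(0,\infty)$: near $0$ the ratio $\sqrt{u}/\sqrt{1-e^{-u}}$ is bounded, while for large $u$ we use $\sqrt{u}\,e^{-u}\to 0$. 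This gives the bound with constant $C_\nu$.

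For the second sequence, note that $\phi(\tau,\lambda)$ does not depend on $n$, so
\[
\sum_{n\ge1}\sqrt{n\tau}\,|k_{n+1}\phi-k_n\phi|=|\phi(\tau,\lambda)|\sum_{n\ge1}\sqrt{n\tau}\,|k_{n+1}-k_n|,
\]
and it remains to bound $|\phi(\tau,\lambda)|\le b\,C'_{\nu,\sigma}$. Set $N=\lceil 1/(\tau r)\rceil$. Using $|a_j|\le b/j^{1+\sigma}$ and $|(\tau\lambda)^\sigma|=(\tau r)^\sigma$, split
\[
|\phi(\tau,\lambda)|\le\sum_{j\le N}\frac{|e^{-j\tau\lambda}-1|}{(\tau r)^\sigma}\,\frac{b}{j^{1+\sigma}}+\sum_{j>N}\frac{|e^{-j\tau\lambda}-1|}{(\tau r)^\sigma}\,\frac{b}{j^{1+\sigma}}.
\]
On the first sum use \eqref{eq:meanvalue1exp} in the form $|e^{-j\tau\lambda}-1|\le M_\nu\,j\tau r$ (obtained as in the proof of \eqref{eq:pullinginabs}); the contribution is at most $M_\nu b(\tau r)^{1-\sigma}\sum_{j\le N}j^{-\sigma}\lesssim M_\nu b/(1-\sigma)$ since $\sigma<1$. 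On the second sum use $|e^{-j\tau\lambda}-1|\le 2$; the contribution is at most $2b(\tau r)^{-\sigma}\sum_{j>N}j^{-1-\sigma}\lesssim b/\sigma$. Combining yields $|\phi|\le bC'_{\nu,\sigma}$, and multiplying by the first bound gives $C_{\nu,\sigma}=C_\nu C'_{\nu,\sigma}$.

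\textbf{Main obstacle.} The only nontrivial point is the first estimate: the naive bound $|k_{n+1}-k_n|\le 2\,r^{1/2}e^{-nc\tau r}$ leads to $\sqrt{\tau r}\sum_n \sqrt{n/(n\tau r)}\,e^{-nc\tau r}$--type quantities that are not uniformly bounded as $\tau r\to 0$. One must retain the factor $1-e^{-c\tau r}$ from \eqref{eq:pullinginabs} and trade it against the singular behaviour $(1-e^{-u})^{-3/2}$ of $\sum\sqrt{n}\,e^{-nu}$ as $u\to 0^+$; this is precisely the role of the Cauchy--Schwarz step above.
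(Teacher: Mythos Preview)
Your argument is correct and takes a genuinely different route from the paper's. For the first sequence the paper uses the simpler bound $|e^{-\tau\lambda}-1|\le|\tau\lambda|$ from \eqref{eq:meanvalue1exp} (rather than \eqref{eq:pullinginabs}), obtaining
\[
\sum_{n\ge1}\sqrt{n\tau}\,|k_{n+1}-k_n|\le\sum_{n\ge1}\tau|\lambda|^{3/2}\sqrt{n\tau}\,e^{-cn\tau|\lambda|}\le\int_0^\infty|\lambda|^{3/2}t^{1/2}e^{-ct|\lambda|/2}\,dt,
\]
which scales to a $\lambda$-independent Gamma integral. For $\phi$, the paper again relies on integral comparison: it applies \eqref{eq:pullinginabs} and bounds $\sum_{j\ge1}\frac{1-e^{-jc|\tau\lambda|}}{|\tau\lambda|^\sigma j^{1+\sigma}}$ by $2^{1+\sigma}c^\sigma\int_0^\infty\frac{1-e^{-t}}{t^{1+\sigma}}\,dt$. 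Your Cauchy--Schwarz step gives a sharper closed form for the first sum, and your near/far splitting for $\phi$ is an equally natural elementary substitute for the integral comparison; the paper's method has the advantage of handling all regimes of $\tau|\lambda|$ uniformly without a case distinction.

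One small technical glitch in your $\phi$-bound: with $N=\lceil 1/(\tau r)\rceil$, when $\tau r>1$ you have $N=1$, and the ``small $j$'' sum contributes the single term $j=1$ with estimate $b(\tau r)^{1-\sigma}$, which blows up as $\tau r\to\infty$. The fix is immediate: take $N=\lfloor 1/(\tau r)\rfloor$ instead (so the first sum is empty for $\tau r\ge1$), or simply observe that for $\tau r\ge1$ the crude bound $|e^{-j\tau\lambda}-1|\le2$ already gives $|\phi|\le 2b(\tau r)^{-\sigma}\sum_{j\ge1}j^{-1-\sigma}\le 2b\zeta(1+\sigma)$. Also, the bound you quote as ``$|e^{-j\tau\lambda}-1|\le M_\nu\,j\tau r$'' is in fact \eqref{eq:meanvalue1exp} directly, with constant $1$; no appeal to \eqref{eq:pullinginabs} is needed there.
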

\begin{proof}

  Setting $c=\cos \nu$, we see that $|e^{-n\tau \la}| \le e^{-c n\tau |\la|}$. Using this and \eqref{eq:meanvalue1exp} we find that
  \begin{align*}
  |k_{n+1}(\tau, \lambda)-k_n(\tau, \lambda)| = |\lambda|^{1/2} |e^{-n\tau\lambda}| |e^{-\tau\lambda} - 1| \leq |\lambda|^{1/2} e^{-n\tau c|\lambda|} |\tau\lambda|.
  \end{align*}
  It follows that
  \begin{align*}
	\sum_{n \ge 1} \sqrt {n \tau} \, |k_{n+1}(\tau, \lambda)-k_n(\tau, \lambda)| & \le \sum_{n\ge1 } \tau |\la|^{\frac 32} \sqrt{n\tau} e^{-cn\tau |\la|}
\\ & = \sum_{n\ge 1} \int_{t_{n}}^{t_{n+1}} |\la|^{\frac 32} t_n^{\frac 12} e^{-ct_n |\la|} dt
 \\ & \le \int_0^\infty |\la|^{\frac 32} t^{\frac 12} e^{-ct |\la|/2} dt = c^{-3/2}\int_0^\infty s^{\frac 12} e^{-s/2} ds,
	\end{align*}
which is a finite number only depending on $\nu$.

To prove the bound for $k(\tau,\lambda)\phi(\tau,\lambda)$, it remains to prove that $\phi$ is uniformly bounded. To do so, note that by \eqref{eq:pullinginabs}
\begin{align}\label{eq:estphiuniform}
\begin{aligned}
|\phi(\tau, \lambda)|& \leq M_{\nu} b\sum_{j\geq 1} \frac{1-e^{-j |\tau \lambda| c}}{|\tau\lambda|^{\sigma} j^{1+\sigma}}
 \\ & = M_{\nu} b\sum_{j\geq 1} \int_{t_j}^{t_{j+1}} \frac{1-e^{-j |\tau \lambda|c}}{|\tau\lambda|^{\sigma} j^{1+\sigma} \tau} dt
 \\ & \leq M_{\nu} b\int_{0}^{\infty} \frac{1-e^{- t |\lambda|c }}{|\lambda|^{\sigma} (t/2)^{1+\sigma}} dt
 = M_{\nu}b 2^{1+\sigma} c^{\sigma} \int_{0}^{\infty} \frac{1-e^{- t}}{t^{1+\sigma}} dt,
\end{aligned}
\end{align}
which is a finite number only depending on $\nu$ and $\sigma$.
\end{proof}

We will also need the following version.
\begin{lemma}\label{lem:expvariant}
Let $\nu\in (0,\pi/2)$ and $\sigma\in (0,1/2)$. Let $\wt{k}_n\colon (0,\infty)\times\Sigma_{\nu}\to \C$ be given by
\begin{align*}
 \wt{k}_n(\tau, \lambda) &= \sum_{j\geq n+1}  \frac{\lambda^{1/2}}{(\tau\lambda)^{\sigma}} e^{-(j-n)\tau\lambda} a_j, \ \ n\geq 1,
\end{align*}
where $(a_j)_{j\geq 1}$ is a sequence in $\C$ that satisfies $|a_j-a_{j+1}|\leq \frac{b}{j^{2+\sigma}}$, where $b\geq 0$ is a constant.
Then there is a constant $C_{\nu,\sigma}$ such that for all $\tau>0$ and $\lambda\in \Sigma_{\nu}$,
\[\frac{\wt{k}(\tau, \lambda)}{b C_{\nu,\sigma}} \ \ \text{defines a sequence in $\mathcal{K}_\tau$}.\]
\end{lemma}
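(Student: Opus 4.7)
The plan is to mirror the approach of Lemma \ref{lem:exp}, the only twist being that the telescoping will now act on the indices of $a$ rather than on the exponential factor. After reindexing $m = j - n$, write
\[
\wt{k}_n(\tau,\lambda) = \frac{\lambda^{1/2}}{(\tau\lambda)^{\sigma}}\sum_{m\geq 1}e^{-m\tau\lambda}\,a_{n+m}.
\]
As in the companion lemmas of this subsection, it suffices to establish the master inequality $\sum_{n\geq 1}\sqrt{n\tau}\,|\wt{k}_{n+1}(\tau,\lambda)-\wt{k}_n(\tau,\lambda)|\leq C_{\nu,\sigma}\,b$, since the pointwise decay $\wt{k}_n\to 0$ follows easily: telescoping $|a_j-a_{j+1}|\leq b/j^{2+\sigma}$ from the (implicitly required) assumption $a_j\to 0$ yields $|a_j|\leq b\,(1+\sigma)^{-1}j^{-1-\sigma}$, and the same integral-comparison argument used below then makes $\wt{k}_n\to 0$ transparent.

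Next I would compute the increment directly. Subtracting after the reindexing produces a clean telescoping in $a$:
\[
\wt{k}_n(\tau,\lambda) - \wt{k}_{n+1}(\tau,\lambda) = \frac{\lambda^{1/2}}{(\tau\lambda)^{\sigma}}\sum_{m\geq 1}e^{-m\tau\lambda}(a_{n+m}-a_{n+m+1}).
\]
Setting $c = \cos\nu$ and $\mu = \tau|\lambda|$, the bound $|e^{-m\tau\lambda}|\leq e^{-mc\mu}$ combined with the hypothesis yields
\[
|\wt{k}_{n+1}-\wt{k}_n|\leq \frac{b\,|\lambda|^{1/2}}{|\tau\lambda|^{\sigma}}\sum_{m\geq 1}\frac{e^{-mc\mu}}{(n+m)^{2+\sigma}}.
\]
I would then multiply by $\sqrt{n\tau}$, sum over $n$, swap the order of summation, and dominate $\sqrt{n\tau}\leq \sqrt{(n+m)\tau}$; combined with $\sum_{n\geq 1}(n+m)^{-3/2-\sigma}\leq \int_m^\infty t^{-3/2-\sigma}\,dt = (1/2+\sigma)^{-1}m^{-1/2-\sigma}$ and the identity $|\lambda|^{1/2}\sqrt{\tau}/|\tau\lambda|^{\sigma} = \mu^{1/2-\sigma}$, this yields
\[
\sum_{n\geq 1}\sqrt{n\tau}\,|\wt{k}_{n+1}-\wt{k}_n|\leq \frac{b\,\mu^{1/2-\sigma}}{1/2+\sigma}\sum_{m\geq 1}\frac{e^{-mc\mu}}{m^{1/2+\sigma}}.
\]

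The final step is a uniform bound of $\mu^{1/2-\sigma}\sum_{m\geq 1}e^{-mc\mu}/m^{1/2+\sigma}$ in $\mu>0$. Since $t\mapsto e^{-tc\mu}t^{-1/2-\sigma}$ is positive and decreasing on $(0,\infty)$ and $1/2+\sigma<1$, the sum is dominated by $\int_0^\infty e^{-tc\mu}t^{-1/2-\sigma}\,dt = \Gamma(1/2-\sigma)(c\mu)^{\sigma-1/2}$, and the prefactor $\mu^{1/2-\sigma}$ cancels the remaining $\mu$ exactly, leaving a constant depending only on $\nu$ and $\sigma$. The computations are quite elementary; the only mild obstacle is the lack of a built-in damping factor analogous to $(e^{-j\tau\lambda}-1)$ in Lemma \ref{lem:exp}, which forces one both to exploit the differencing structure of the hypothesis explicitly and to invoke the standing implicit assumption $a_j\to 0$ to secure the pointwise decay of $\wt{k}_n$.
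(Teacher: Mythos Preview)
Your proof is correct and in fact cleaner than the paper's. Both arguments begin from the same telescoping identity
\[
\wt{k}_n(\tau,\lambda)-\wt{k}_{n+1}(\tau,\lambda)=\frac{\lambda^{1/2}}{(\tau\lambda)^{\sigma}}\sum_{j\ge n+1}e^{-(j-n)\tau\lambda}(a_j-a_{j+1}),
\]
but diverge from there. The paper peels off the $j=n+1$ term, converts the tail to an integral over $[t_{n+1},\infty)$, and then handles the resulting double integral via the substitution $u=s/t$; this gives two pieces $A_n$ and $B_n$ treated separately. You instead bound $\sqrt{n}\le\sqrt{n+m}$, swap the sums, and reduce directly to $\mu^{1/2-\sigma}\sum_{m\ge1}e^{-mc\mu}m^{-1/2-\sigma}$, which a single integral comparison dispatches. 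Your route is shorter and avoids the somewhat ad hoc split; the paper's integral-based manipulation, on the other hand, recurs in the rational-scheme analogue (Lemma~\ref{lem:rationalvariant}), so its structure is reused later. You are also right to flag that the decay $\wt{k}_n\to 0$ required by the definition of $\mathcal{K}_\tau$ tacitly uses $a_j\to 0$; the paper dismisses this as ``obvious'' at the start of Subsection~\ref{ss:kernelexp}, which is only justified because in the actual application $a_j=\psi(j,s)$ does vanish at infinity.
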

\begin{proof}
A rewriting gives
 \begin{align*}
 |\wt{k}_{n}(\tau, \lambda) &- \wt{k}_{n+1}(\tau, \lambda) |
 \\ & \qquad = \frac{|\lambda|^{1/2}}{|\tau\lambda|^{\sigma}} \Big|\sum_{j\geq n+1} e^{-(j-n)\tau\lambda} (a_{j} - a_{j+1})\Big|
 \\ & \qquad\leq \frac{|\lambda|^{1/2}}{|\tau\lambda|^{\sigma}} \sum_{j\geq n+1} e^{-(j-n)\tau|\lambda| c } \frac{b}{j^{2+\sigma}}
\\ & \qquad = \frac{b|\lambda|^{1/2}}{|\tau\lambda|^{\sigma}} e^{-\tau|\lambda| c } \frac{1}{(n+1)^{2+\sigma}} + b|\lambda|^{1/2-\sigma} \tau \sum_{j\geq n+2} \int_{t_{j-1}}^{t_{j}} e^{-(t_j-t_n)|\lambda| c } \frac{1}{t_j^{2+\sigma}} ds
\\ & \qquad \leq \frac{b|\lambda|^{1/2}}{|\tau\lambda|^{\sigma}} e^{-\tau|\lambda| c } \frac{1}{(n+1)^{2+\sigma}} + b|\lambda|^{1/2-\sigma} \tau \int_{t_{n+1}}^{\infty} e^{-(s-t_n)|\lambda| c } \frac{1}{s^{2+\sigma}} ds \eqqcolon  A_n+B_n,
 \end{align*}
 where we took out the $j=n+1$ term because later it is helpful to start the integral at $t_{n+1}$ instead of $t_n$. Next, multiplying by $\sqrt{n\tau}$ and summing over $n\geq1$ the $A_n$-term becomes
 \begin{align*}
   \sum_{n\geq 1} \sqrt{n\tau} A_n \leq b|\tau \lambda|^{1/2-\sigma} e^{-\tau|\lambda| c } \sum_{n\geq 1} \frac{1}{(n+1)^{\frac32+\sigma}} \leq b\sup_{r>0} r^{1/2-\sigma} e^{-r c } \sum_{n\geq 1} \frac{1}{(n+1)^{\frac32+\sigma}},
 \end{align*}
 which is a number only depending on $\nu$ and $\sigma$. The $B_n$-term can be estimated as
 \begin{align*}
 \sum_{n\geq 1} \sqrt{n\tau} B_n & \leq b|\lambda|^{1/2-\sigma} \sum_{n\geq 1} t_n^{1/2}  \tau \int_{t_{n+1}}^{\infty} e^{-(s-t_n)|\lambda| c } \frac{1}{s^{2+\sigma}} ds
 \\ & = b|\lambda|^{1/2-\sigma} \sum_{n\geq 1} \int_{t_{n}}^{t_{n+1}} \int_{t_{n+1}}^{\infty} t_n^{1/2} e^{-(s-t_n)|\lambda| c } \frac{1}{s^{2+\sigma}} ds dt
 \\ & \leq b|\lambda|^{1/2-\sigma} \int_{0}^{\infty} \int_{t}^{\infty} t^{1/2} e^{-(s-t)|\lambda| c } \frac{1}{s^{2+\sigma}} ds dt
 \\ & = b|\lambda|^{1/2-\sigma} \int_{0}^{\infty} \int_{1}^{\infty} e^{-t(u-1)|\lambda| c } \frac{1}{t^{1/2+\sigma} u^{2+\sigma}} du dt
 \\ & = b|\lambda|^{1/2-\sigma} \int_{0}^{\infty} \int_{0}^{\infty} e^{-t u|\lambda| c } \frac{1}{t^{1/2+\sigma} (u+1)^{2+\sigma}} dt du
 \\ & = b c^{\sigma-\frac12} \int_{0}^{\infty} \frac{e^{-v}}{v^{1/2+\sigma}} dv \int_{0}^{\infty} \frac{u^{\sigma-\frac12}}{(u+1)^{2+\sigma}} du,
 \end{align*}
 which is a constant depending only on $\nu$ and $\sigma$.
\end{proof}

\subsection{Kernels for rational schemes}
Next, we check $k\in \mathcal{K}_\tau$ for sequences associated to rational schemes. The proofs are similar to those of the exponential function, and sometimes we can rely on the latter in proving estimates.

The following assumption will be in place:
\begin{assumption}\label{assum:rrational}
Let $\theta\in (0,\pi/2]$. Let $r\colon \Sigma_{\theta}\to \C$ be a rational function such that $r$ is consistent of order $\ell\ge1$, $A(\theta)$-stable, i.e.\ $|r(z)|\leq 1$ for
$z\in \Sigma_{\theta}$, and $r(\infty) = 0$.
\end{assumption}

Recall that this assumption is precisely what is needed for Lemma \ref{lem:rational-est}.

\begin{lemma}\label{lem:rational}
Suppose that Assumption \ref{assum:rrational} holds and let $\nu\in (0,\theta)$. Let $\sigma\in (0,1/2)$.
Let $k_n^r, \phi^r\colon (0,\infty)\times\Sigma_{\nu}\to \C$ be defined by
\begin{align*}
k_n^r(\tau,\la)& =\la^{\frac 12} r(\tau \la)^n, \ \ n\geq 1,
\\ \phi^r(\tau, \lambda) &= \sum_{j \ge 1} \frac{r(\tau \la)^j-1}{(\tau \lambda)^{\sigma}} a_j,
\end{align*}
where $|a_j|\leq b/j^{1+\sigma}$ for all $j\geq 1$, and $b\geq 0$ is a constant.
Then there are constants $C_{\nu,\theta}$ and $C_{\nu,\theta,\sigma}$ such that for all $\tau>0$ and $\lambda\in \Sigma_{\nu}$,
\[\text{both} \ \ \frac{k_n^r(\tau,\lambda)}{b C_{\nu,\theta}} \ \ \text{and} \ \ \frac{k_n^r(\tau,\lambda) \phi^r(\tau, \lambda)}{b C_{\nu,\theta,\sigma}} \ \ \text{define sequences in $\mathcal{K}_\tau$}.\]
\end{lemma}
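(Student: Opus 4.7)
The plan is to follow the structure of Lemma \ref{lem:exp} very closely, replacing exponential bounds with rational scheme bounds from Lemma \ref{lem:rational-est}. The main new ingredient is handling the regime $|\tau\lambda| \ge 1$, which has no analogue in the proof for the true exponential. First, I would observe that $k_n^r(\tau,\lambda) \to 0$ as $n \to \infty$ is immediate from \eqref{Ineq:growth rational z<1} for $|\tau\lambda| \le 1$ and from \eqref{Ineq:Difference of exp and rational z>1} for $|\tau\lambda| \ge 1$. The heart is to bound $\sum_{n\ge 1}\sqrt{n\tau}\,|k_{n+1}^r - k_n^r|$ uniformly in $\tau,\lambda$. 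I would factor
\[
k_{n+1}^r(\tau,\lambda) - k_n^r(\tau,\lambda) = \lambda^{1/2}\, r(\tau\lambda)^n\,\bigl(r(\tau\lambda) - 1\bigr),
\]
and split into the two regimes. For $|\tau\lambda|\le 1$, consistency of order $\ell\ge 1$ combined with $|e^{-z}-1|\le |z|$ (from \eqref{eq:meanvalue1exp}) yields $|r(\tau\lambda) - 1| \le C_\theta |\tau\lambda|$, and \eqref{Ineq:growth rational z<1} gives $|r(\tau\lambda)|^n \le C e^{-cn|\tau\lambda|}$. The resulting sum is bounded by the same integral computation as in Lemma \ref{lem:exp}. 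For $|\tau\lambda|\ge 1$, I would use $|r(\tau\lambda)-1|\le 2$ (A-stability) and \eqref{Ineq:Difference of exp and rational z>1} to get
\[
\sum_{n\ge 1}\sqrt{n\tau}\,|k_{n+1}^r - k_n^r| \le 2C|\tau\lambda|^{-1/2}\sum_{n\ge 1}\sqrt{n}\,e^{-cn},
\]
which is bounded since $|\tau\lambda|^{-1/2}\le 1$ in this regime.

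Next, I would prove that $\phi^r(\tau,\lambda)$ is uniformly bounded. The key identity is
\[
r(z)^j - 1 = (r(z)-1)\sum_{i=0}^{j-1} r(z)^i.
\]
For $|\tau\lambda|\le 1$, using $|r(\tau\lambda) - 1|\le C_\theta|\tau\lambda|$ and $|r(\tau\lambda)^i|\le C e^{-ci|\tau\lambda|}$, a geometric sum gives $|r(\tau\lambda)^j - 1| \le C_{\nu,\theta}\bigl(1 - e^{-cj|\tau\lambda|}\bigr)$, after which the same calculation as in \eqref{eq:estphiuniform} in the proof of Lemma \ref{lem:exp} produces a bound depending only on $\nu,\theta,\sigma$. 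For $|\tau\lambda|\ge 1$, I use the crude bound $|r(\tau\lambda)^j - 1|\le 2$ together with $|\tau\lambda|^{-\sigma}\le 1$, obtaining $|\phi^r(\tau,\lambda)| \le 2b\sum_{j\ge 1}j^{-1-\sigma}$, which is finite because $\sigma>0$.

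Finally, since $\phi^r(\tau,\lambda)$ does not depend on $n$, one has
\[
k_{n+1}^r\phi^r - k_n^r\phi^r = \phi^r\cdot(k_{n+1}^r - k_n^r),
\]
so multiplying the two bounds from the previous steps gives the required $\mathcal{K}_\tau$ estimate for $k_n^r\phi^r$, with a constant depending only on $\nu,\theta,\sigma$ and $b$. The main obstacle is the $|\tau\lambda|\ge 1$ regime, where neither consistency nor the clean exponential decay of $|r(z)|^n$ is available; one must instead exploit the combination of the crude A-stability bound $|r|\le 1$ with the sharper decay $|r(z)|^n\le C|z|^{-1}e^{-cn}$ from \eqref{Ineq:Difference of exp and rational z>1}, and it is the interplay between the $|\tau\lambda|^{-1}$ factor and the $\lambda^{1/2}$ (respectively $(\tau\lambda)^{-\sigma}$) prefactor that makes the bounds close correctly.
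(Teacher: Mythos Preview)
Your proposal is correct and follows essentially the same approach as the paper: the same factorization $k_{n+1}^r-k_n^r=\lambda^{1/2}r(\tau\lambda)^n(r(\tau\lambda)-1)$, the same split into $|\tau\lambda|\lessgtr 1$, and the same reduction of the $k_n^r\phi^r$ claim to a uniform bound on $\phi^r$. The one substantive difference is in bounding $|r(z)^j-1|$ for $|z|\le 1$: the paper writes $|r(z)^j-1|\le |r(z)^j-e^{-jz}|+|e^{-jz}-1|$ and invokes \eqref{Ineq:Difference of exp and rational z<1} for the first piece (producing an extra sum $\sum_j |z|^{1-\sigma}e^{-c_0j|z|}/j^\sigma$ that is handled by a separate integral), whereas your geometric-series identity $r(z)^j-1=(r(z)-1)\sum_{i=0}^{j-1}r(z)^i$ combined with $|z|/(1-e^{-c|z|})\le C$ gives $|r(z)^j-1|\lesssim 1-e^{-cj|z|}$ directly and feeds straight into \eqref{eq:estphiuniform}. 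Your route is slightly more economical; both are valid.
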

\begin{proof}
Let $z=\tau \la$. Then
\[\sum_{n\geq 1}\sqrt {n\tau} |k_{n+1}^r(\tau, \lambda)-k_n^r(\tau, \lambda)| = |z|^{\frac 12} |r(z)-1| \sum_{n\geq 1} \sqrt{n} |r(z)|^n.\]
We bound the latter uniformly in $z$. First, consider $z\in \Sigma_{\theta}$ with $|z|\geq 1$. Then
by \eqref{Ineq:Difference of exp and rational z>1}
	\begin{align*}
|r(z)-1| \, |z|^{\frac12} \sum_{n\ge1} \sqrt n \, |r(z)|^n
 \leq C_1 (C_1+1) \sum_{n\ge1} \sqrt n \, e^{-c_0n},
\end{align*}
which is a constant only depending on $\theta$ and $\nu$.

Next, consider $z\in \Sigma_{\theta}$ with $|z|<1$. Then by \eqref{Ineq:growth rational z<1} and \eqref{eq:meanvalue1exp}, $|r(z)|^n \leq C_1 e^{-c_0 n|z|}$, and
\[|r(z)-1|\leq |r(z)-e^{-z}| + |e^{-z}-1| \leq (C_1 + 1)|z|.\]
Therefore,
\begin{align*}
|r(z)-1| \, |z|^{\frac12} \sum_{n\ge1} \sqrt n \, |r(z)|^n
& \leq \frac{|r(z)-1|}{|z|} \sum_{n\ge1} \sqrt{n} |z|^{3/2} e^{-c_0n|z|}
\leq (C_1 + 1) \sum_{n\ge1} \sqrt{n} |z|^{3/2} e^{-c_0n|z|},
\end{align*}
which can be bounded as in the proof of Lemma \ref{lem:exp}.

To prove the same for $k^r(\tau,\lambda) \phi^r(\tau,\lambda)$, it is enough to bound $\phi^r$ uniformly. To do so note that
\begin{align*}
|\phi^r(\tau,\lambda)| \leq b \sum_{j \ge 1} \frac{|r(z)^j-1|}{|z|^{\sigma} j^{1+\sigma}}.
\end{align*}
For $z\in \Sigma_{\nu}$ and $|z|\geq 1$ using that $|r(z)|\leq 1$, we see that
$|\phi^r(\tau,\lambda)| \leq 2b\sum_{j \ge 1} \frac{1}{j^{1+\sigma}}$.

For $|z|<1$ note that by \eqref{Ineq:Difference of exp and rational z<1} and \eqref{eq:pullinginabs}
\[|r(z)^j-1|\leq |r(z)^j-e^{-jz}|+|e^{-jz}-1|\leq C_1 (j |z|^{1} e^{-c_0j|z|}+ 1- e^{-c j|z|}), \]
where $c = \cos(\nu)$. Therefore,
\begin{align*}
|\phi^r(\tau,\lambda)| \leq b C_1 \sum_{j \ge 1} \frac{|z|^{1 -\sigma} e^{-c_0j|z|}}{j^{\sigma}} +  b C_1 \sum_{j \ge 1} \frac{1-e^{-c j|z|}}{|z|^{\sigma} j^{1+\sigma}}.
\end{align*}
The first term on the right-hand side satisfies
\begin{align*}
b\sum_{j \ge 1} \frac{|z|^{1 -\sigma} e^{-c_0j|z|}}{j^{\sigma}} = b\sum_{j \ge 1} \int_{t_{j-1}}^{t_{j}} \frac{|\lambda| e^{-c_0 t_j |\lambda|}}{|\lambda|^{\sigma} t_j^{\sigma}} dt \leq b\int_{0}^{\infty} \frac{|\lambda| e^{-c_0 t |\lambda|}}{|\lambda|^{\sigma} t^{\sigma}} dt
 = b c^{\sigma-1}_0 \int_{0}^{\infty} \frac{e^{-t}}{t^{\sigma}} dt.
\end{align*}
The second term in the estimate for $\phi^r$ is uniformly bounded by \eqref{eq:estphiuniform}.
\end{proof}

Finally, we will need the following variant of Lemma \ref{lem:expvariant} as well.
\begin{lemma}\label{lem:rationalvariant}
Suppose that Assumption \ref{assum:rrational} holds and let $\nu\in (0,\theta)$. Let $\sigma\in (0,1/2)$. Let $\wt{k}_n^r\colon (0,\infty)\times\Sigma_{\nu}\to \C$ be given by
\begin{align*}
\wt{k}_n^r (\tau, \lambda) &= \sum_{j\geq n+1} \frac{\lambda^{1/2}}{(\tau\lambda)^{\sigma}} r(\tau\lambda)^{j-n} a_j, \ \ n\geq 1,
\end{align*}
where $(a_j)_{j\geq 1}$ in $\C$ satisfies $|a_j-a_{j+1}|\leq \frac{b}{j^{2+\sigma}}$, where $b\geq 0$ is a constant.
Then there is a constant $C_{\nu,\theta,\sigma}$ such that for all $\tau>0$ and $\lambda\in \Sigma_{\nu}$,
\[\frac{\wt{k}^r(\tau, \lambda)}{b C_{\nu,\theta,\sigma}} \ \ \text{defines a sequence in $\mathcal{K}_\tau$}.\]
\end{lemma}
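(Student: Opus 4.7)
The plan is to mimic the proof of Lemma \ref{lem:expvariant} step by step, replacing the exponential decay $|e^{-(j-n)\tau\lambda}|\leq e^{-c(j-n)\tau|\lambda|}$ by the rational analogue supplied by Lemma \ref{lem:rational-est}. Reindexing the sum defining $\wt{k}_{n+1}^r$ via $j\mapsto j-1$ gives
\[
\wt{k}_n^r(\tau,\lambda) - \wt{k}_{n+1}^r(\tau,\lambda) = \frac{\lambda^{1/2}}{z^{\sigma}}\sum_{j\geq n+1} r(z)^{j-n}(a_j - a_{j+1}), \qquad z \coloneq \tau\lambda,
\]
so that the hypothesis $|a_j - a_{j+1}|\leq b/j^{2+\sigma}$ yields
\[
|\wt{k}_n^r - \wt{k}_{n+1}^r| \leq \frac{b|\lambda|^{1/2}}{|z|^{\sigma}}\sum_{j\geq n+1} \frac{|r(z)|^{j-n}}{j^{2+\sigma}}.
\]
As in the proofs of Lemmas \ref{lem:rational} and \ref{lem:expvariant}, we split according to whether $|z|\geq 1$ or $|z|<1$.

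For $|z|\geq 1$, the estimate \eqref{Ineq:Difference of exp and rational z>1} gives $|r(z)|^{j-n}\leq C|z|^{-1}e^{-c(j-n)}$, whence $|\wt{k}_n^r - \wt{k}_{n+1}^r|\lesssim b|\lambda|^{1/2}|z|^{-1-\sigma}(n+1)^{-2-\sigma}$. Multiplying by $\sqrt{n\tau}$, summing over $n\geq 1$, and using $|\tau\lambda|^{1/2}/|z|^{1+\sigma} = |z|^{-1/2-\sigma}\leq 1$, the resulting series is dominated by $\sum_n n^{1/2}(n+1)^{-2-\sigma}<\infty$, which produces a bound depending only on $\nu$, $\theta$, and $\sigma$.

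For $|z|<1$, the estimate \eqref{Ineq:growth rational z<1} gives $|r(z)|^{j-n}\leq Ce^{-c(j-n)|z|}$, which is (up to a multiplicative constant coming from $r$, $\nu$, $\theta$) exactly the exponential decay that drives the proof of Lemma \ref{lem:expvariant}. Separating the $j=n+1$ contribution and estimating the remaining tail by an integral -- using that $t_j\leq 2s$ for $s\in[t_{j-1}, t_j]$ with $j\geq 2$ -- we obtain
\[
|\wt{k}_n^r - \wt{k}_{n+1}^r| \leq \frac{bC|\lambda|^{1/2}}{|z|^{\sigma}}\frac{e^{-c|z|}}{(n+1)^{2+\sigma}} + bC'|\lambda|^{1/2-\sigma}\tau\int_{t_{n+1}}^{\infty}\frac{e^{-c(s-t_n)|\lambda|}}{s^{2+\sigma}}\,ds,
\]
which is structurally identical to the intermediate bound appearing in the proof of Lemma \ref{lem:expvariant}. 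From there, the Fubini-and-change-of-variables chain from that proof (converting the $n$-sum into an integral in $t$ and then substituting to remove $|\lambda|$) transfers verbatim, bounding $\sum_{n\geq 1}\sqrt{n\tau}$ of the right-hand side by a constant depending only on $\nu$, $\theta$, $\sigma$.

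The only conceptually new ingredient compared to Lemma \ref{lem:expvariant} is the case $|z|\geq 1$, which reflects the fact that a rational $r$ need not decay on $\Sigma_\nu$; fortunately \eqref{Ineq:Difference of exp and rational z>1} provides sufficient decay to close this case trivially, so the bulk of the work is bookkeeping the constants produced by Lemma \ref{lem:rational-est}. The requirement $k_n\to 0$ in the definition of $\mathcal K_\tau$ will be transparent in the intended applications, where $a_j$ is a tail-type quantity vanishing as $j\to\infty$.
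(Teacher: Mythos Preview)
Your proof is correct and follows essentially the same strategy as the paper: reindex to get the difference formula, bound by $\frac{b|\lambda|^{1/2}}{|z|^\sigma}\sum_{j\geq n+1}\frac{|r(z)|^{j-n}}{j^{2+\sigma}}$, and then feed in the decay estimates \eqref{Ineq:growth rational z<1} and \eqref{Ineq:Difference of exp and rational z>1} from Lemma~\ref{lem:rational-est} to reduce to the computation already carried out in Lemma~\ref{lem:expvariant}. The only organizational difference is that the paper first separates off the $j=n+1$ term (calling the pieces $A_n$ and $B_n$) and only afterwards splits $B_n$ according to $|z|\gtrless 1$, whereas you split on $|z|$ first; for $|z|\geq 1$ your direct bound $\sum_{j\geq n+1}e^{-c(j-n)}j^{-2-\sigma}\lesssim (n+1)^{-2-\sigma}$ is in fact a bit cleaner than the paper's route of invoking the exponential case with a specific value of $\lambda$.
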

\begin{proof}
As in the proof of Lemma \ref{lem:expvariant}, using $z = \tau\lambda$ we can write
\begin{align*}
 |\wt{k}_{n}^r(\tau, \lambda) - \wt{k}_{n+1}^r(\tau, \lambda) |
 &= \frac{|\lambda|^{1/2}}{|z|^{\sigma}} \Big|\sum_{j\geq n+1} r(z)^{j-n} (a_j - a_{j+1})\Big|
 \\ & \leq \frac{b|\lambda|^{1/2}}{|z|^{\sigma}} \sum_{j\geq n+1} \frac{|r(z)|^{j-n}}{{j^{2+\sigma}}} .
\\ & = \frac{b|\lambda|^{1/2}}{|z|^{\sigma}} |r(z)| \frac{1}{(n+1)^{2+\sigma}} + \frac{b|\lambda|^{1/2}}{|z|^{\sigma}} \sum_{j\geq n+2} \frac{|r(z)|^{j-n}}{{j^{2+\sigma}}}
 \eqqcolon A_n + B_n.
\end{align*}
The $A_n$ term multiplied by $\sqrt{n\tau}$ and summed over all $n\geq 1$ gives
\begin{align*}
\sum_{n\geq 1} \sqrt{n\tau} A_n = b\sum_{n\geq 1} \sqrt{n \tau} \frac{|\lambda|^{1/2}}{|z|^{\sigma}} \frac{|r(z)|}{{(n+1)^{2+\sigma}}} \leq b|z|^{1/2-\sigma} |r(z)| \sum_{n\geq 1} \frac{1}{{(n+1)^{\frac32+\sigma}}}.
\end{align*}
For $|z|\leq 1$, we have $|z|^{1/2-\sigma} |r(z)|\leq 1$. For $|z|>1$, by \eqref{Ineq:Difference of exp and rational z>1},
$|z|^{1/2-\sigma} |r(z)| \leq C_1$.

It remains to bound $\sum_{n\geq 1} \sqrt{n\tau} B_n$. Note that for $|z|\geq 1$ again by \eqref{Ineq:Difference of exp and rational z>1}
\begin{align*}
\sum_{n\geq 1} \sqrt{n\tau} B_n & \leq b C_1 \sum_{n\geq 1} \sqrt{n} \sum_{j\geq n+2} \frac{e^{-c_0(j-n)}}{{j^{2+\sigma}}},
\end{align*}
and the latter is finite as can be seen from the proof of Lemma \ref{lem:expvariant} by taking $\lambda = c_0/c$.

For $|z|<1$ by \eqref{Ineq:growth rational z<1}
$|r(z)|^{j-n} \leq b C_1 e^{-c_0(j-n)|z|}$ and therefore
\begin{align*}
\sum_{n\geq 1} \sqrt{n\tau} B_n&\leq b C_1 \sum_{n\geq 1} \sqrt n |z|^{1/2-\sigma} \sum_{j\geq n+2} \frac{e^{-c_0(j-n)|z|}}{{j^{2+\sigma}}},
\end{align*}
which can be bounded in the same way as in Lemma \ref{lem:expvariant}.
\end{proof}

\section{Discrete maximal estimates}\label{sec:discretemax}

In this section, we will consider a maximal estimate with parabolic regularization, which includes Theorem \ref{thm:maxest} as a special case. In this maximal estimate one needs to bound $\E\sup_{n\geq 1} \|Y_n\|^p_{Z}$ (for a suitable norm $\|\cdot\|_Z$). Since $Y$ does not have a martingale structure, one often cannot apply stochastic calculus techniques, and therefore it is unclear how to deal with the supremum over $n$ inside an expectation. Especially, if the norm $\|\cdot\|_Z$ is chosen to express optimal parabolic regularization, this leads to serious issues. In the deterministic setting there is no expectation, so that this difficulty is less prominent. Maximal estimates in the latter case are well-known and the reader is referred to Remark \ref{rem:MaxDet} for further details. 

\subsection{Maximal estimate with parabolic regularization}

The following result is the main result of this section and, in particular, includes Theorem \ref{thm:maxest}.

Recall from \eqref{Eq:Definition of approximation scheme} that $Y$ is given by the recursive formula $Y_0=0$ and
\begin{equation*}
		Y_{n+1} \coloneq R_\tau Y_{n} + R_\tau \Delta_n I_g, \quad n\geq 0,
\end{equation*}	
where $\Delta_n I_g$ is as defined below \eqref{Eq:Definition of approximation scheme} and $R_{\tau}$ is as in Assumption \ref{assum:mainDSMR}. Moreover, by \eqref{Eq: Uniform step discrete stochastic convolution}, 
$$
Y_n = \sum_{j=0}^{n-1} R_{\tau}^{n-j} \Delta_j I_g, \ \ n\geq 1.
$$

\begin{theorem}\label{thm:maximalest}
Let $q\in [2, \infty)$ and suppose that $X_0$ is isomorphic to a closed subspace of $L^q(\mathcal{O})$ with $(\mathcal{O}, \Sigma,\mu)$ a $\sigma$-finite measure space. Suppose that Assumption \ref{assum:mainDSMR} holds, that $A$ has a bounded $H^\infty$-calculus on $X_0$ of angle $<\pi/2$, and that $0\in \rho(A)$.
Then for any $p\in (2, \infty)$ and $\alpha\in [0,\frac{p}{2}-1)$, there is a constant $C$ such that for every $g\in L^{p}_{\mathbb F}(\Omega;L^p(\R_+,w_{\alpha};\gamma(H,X_{1/2})))$ and every stepsize $\tau>0$,
\begin{align*}
	\E \sup_{n\ge1} \| Y_n \|_{X_{1-\frac {1+\alpha}{p},p}}^p & \le C^p \E\|g\|^{p}_{L^p(\R_+,w_{\alpha};\gamma(H,X_{1/2}))},
\\ \E \sup_{n\ge1} (\tau n)^{\alpha} \| Y_n \|_{X_{1-\frac {1}{p},p}}^p &\le C^p \E\|g\|^{p}_{L^p(\R_+,w_{\alpha};\gamma(H,X_{1/2}))},
\end{align*}	
where $Y$ is given by \eqref{Eq:Definition of approximation scheme}.
\end{theorem}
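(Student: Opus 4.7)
\emph{Strategy.} The plan is to embed the discrete sequence in continuous time and apply a weighted parabolic trace theorem. Let $\tilde Y \colon [0,\infty) \to X_0$ denote the piecewise affine interpolant of $(Y_n)_{n\ge 0}$, so that $\tilde Y(t_n) = Y_n$. Then
\[
\sup_{n \ge 1} \|Y_n\|_{X_{1-(1+\alpha)/p,\, p}} \le \sup_{t \ge 0} \|\tilde Y(t)\|_{X_{1-(1+\alpha)/p,\, p}},
\]
so the first estimate reduces to a maximal bound for $\tilde Y$. This bound will in turn follow from the weighted parabolic trace embedding
\[
L^p(\R_+, w_\alpha; X_1) \cap W^{s, p}(\R_+, w_\alpha; X_{1-s}) \hookrightarrow C_b(\R_+; X_{1-(1+\alpha)/p,\, p})
\]
valid for $\alpha \in (-1, p-1)$ and any $s \in ((1+\alpha)/p, 1/2)$ (the window is non-empty thanks to $\alpha < p/2 - 1$). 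The second estimate, with pointwise weight $(\tau n)^\alpha$ valued in $X_{1-1/p, p}$, will follow from the first by a rescaling argument combined with the instantaneous smoothing bound $\|A^{\alpha/p}e^{-tA}\|_{\L(X_0)} \lesssim t^{-\alpha/p}$.

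\emph{Spatial bound.} The hypotheses on $A$ (bounded $H^\infty$-calculus of angle $<\pi/2$ and $0\in\rho(A)$) yield $A\in\SMR(p,\alpha,\infty)$ via Corollary \ref{cor:DSMRLq} for $q>2$ (and the Hilbert-space case for $q=2$), together with Theorem \ref{thm:weighted} for the weight. By Theorem \ref{thm:weighted} we then have $R\in\DSMR(p,\alpha,\infty)$, giving
\[
\|A\tilde Y\|_{L^p(\Omega;\, L^p(\R_+, w_\alpha; X_0))} \lesssim \|g\|_{L^p(\Omega;\, L^p(\R_+, w_\alpha; \gamma(H, X_{1/2})))}
\]
after transferring from grid points to all $t$ via the uniform stability \eqref{Ineq:Fractional powers of rational}.

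\emph{Fractional time derivative---the main obstacle.} On $(t_n, t_{n+1})$ we have $\partial_t \tilde Y = \tau^{-1}(R_\tau - I)Y_n + \tau^{-1}R_\tau \Delta_n I_g$; the first piece behaves like $-AY_n$ and is controlled by $\DSMR$, but the noise piece has $L^p(X_0)$-norm of order $\tau^{-1/2+1/p}$, which blows up as $\tau\downarrow 0$. We must therefore work with a fractional derivative $\partial_t^s \tilde Y$ valued in $X_{1-s}$ ($s<1/2$). Writing $\tilde Y(t) - \tilde Y(u)$ for $t \in [t_m, t_{m+1}]$ and $u \in [t_n, t_{n+1}]$ with $n<m$ explicitly via \eqref{Eq: Uniform step discrete stochastic convolution} and the affine interpolation, one obtains: (i) ``past'' terms $(R_\tau^{m-j} - R_\tau^{n-j})\Delta_j I_g$ with $j \le n-1$; (ii) ``intermediate'' terms $R_\tau^{m-j}\Delta_j I_g$ with $n \le j \le m-1$; (iii) affine corrections $\tau^{-1}(t-t_m)(R_\tau - I)R_\tau^{m-j}\Delta_j I_g$ and the $u$-analogues; and (iv) the current noise contributions involving $R_\tau \Delta_m I_g$ and $R_\tau \Delta_n I_g$. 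Each type must then be inserted into the Gagliardo-type seminorm $\iint (t-u)^{-1-sp}\|\tilde Y(t)-\tilde Y(u)\|_{X_{1-s}}^p\, w_\alpha(t)\, du\, dt$ and bounded separately.

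\emph{$H^\infty$-calculus and $\mathcal R$-boundedness.} Each contribution is represented, via the bounded $H^\infty$-calculus of $A$, as a Dunford contour integral
\[
\frac{1}{2\pi i}\int_{\partial \Sigma_\nu}(\lambda - A)^{-1}\Big(\sum_{j \ge 0} K_j(\tau, \lambda)\, \Delta_j I_g\Big)\, d\lambda,
\]
where the scalar kernel sequences $K(\tau, \lambda) = (K_j(\tau, \lambda))_{j \ge 0}$ are built from $r(\tau\lambda)^n$ (or $e^{-n\tau\lambda}$) multiplied by factors $\lambda^{1/2}(\tau\lambda)^{-\sigma}$ with $\sigma \in (0,1/2)$ matching the fractional order $s$. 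These are precisely the kernels of Lemmas \ref{lem:exp}--\ref{lem:rationalvariant}, which establish $K(\tau, \lambda) \in c \cdot \mathcal K_\tau$ uniformly in $\lambda \in \Sigma_\nu$ and $\tau > 0$. Theorem \ref{Thm: J is uniformly R-bdd} together with Remark \ref{Temp: Rbdd for i>n} then yields the $\mathcal R$-boundedness of the associated discrete stochastic convolution operators in $L^p(\Omega; \ell^p_{\tau, w_\alpha}(X_0))$, and integrating the resulting operator-valued $H^\infty$-calculus estimate along $\partial \Sigma_\nu$ collapses the contour integral into the required $L^p(\R_+, w_\alpha; X_{1-s})$-bound for $\partial_t^s \tilde Y$. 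Combined with the spatial bound this yields the first estimate via the trace embedding. The principal difficulty is the proliferation of decomposition terms: each must be matched to the correct kernel lemma with the right choice of $\sigma$, and the stochastic convolution over future indices (term (ii) and the tail parts of (iv)) in particular requires the variant in Remark \ref{Temp: Rbdd for i>n}.
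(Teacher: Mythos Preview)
Your proposal is essentially the paper's own approach: linear interpolation $\tilde Y$, a weighted trace embedding to reduce to $L^p(w_\alpha;X_1)\cap H^{\sigma,p}(w_\alpha;X_{1-\sigma})$ with $\sigma\in(0,1/2)$, the spatial bound via weighted $\DSMR$, and the fractional time-regularity via a decomposition of $Y_{n+j}-Y_n$ into ``past'' and ``future'' stochastic sums, each handled by the operator-valued $H^\infty$-calculus together with the kernel Lemmas \ref{lem:exp}--\ref{lem:rationalvariant} and the $\mathcal R$-boundedness of Theorem \ref{Thm: J is uniformly R-bdd} / Remark \ref{Temp: Rbdd for i>n}. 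The paper uses the Balakrishnan formula for $(-\partial_t)^\sigma$ rather than the Gagliardo seminorm, which leads to a slightly different bookkeeping ($T_1$--$T_9$ instead of your (i)--(iv)), but the substance is identical.

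One point where you diverge is the second estimate. Your proposed ``rescaling plus instantaneous smoothing $\|A^{\alpha/p}e^{-tA}\|\lesssim t^{-\alpha/p}$'' does not obviously work: passing from $X_{1-(1+\alpha)/p,p}$ to the smaller space $X_{1-1/p,p}$ is a \emph{gain} of regularity, and there is no semigroup factor $e^{-t_nA}$ sitting in front of $Y_n$ to supply it. The paper instead obtains \emph{both} estimates simultaneously from the single fractional-derivative bound \eqref{eq:supestimu2}, by invoking the two trace embeddings of \cite[Theorem 1.2]{ALV21}:
\[
L^{p}(\R_+,w_{\alpha};X_{1}) \cap H^{\sigma,p}(\R_+,w_{\alpha};X_{1-\sigma})\hookrightarrow C_b([0,\infty);X_{1-\frac {1+\alpha}{p},p})\cap C_b((0,\infty),w_{\alpha};X_{1-\frac {1}{p},p}),
\]
so no extra argument is needed for the weighted-$\sup$ estimate.
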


Since $X_0$ is assumed to be isomorphic to a closed subspace of $L^q$, it follows that any of the complex interpolation spaces $X_{\beta}$ for $\beta\in [0,1]$, are isomorphic to a closed subspace of $L^q$. Indeed, since we assume that $A$ has a bounded $H^\infty$-calculus it follows that $X_{\beta} = D(A^{\beta})$ isomorphically (see Lemma \ref{lem:BIP}). Since $D(A^{\beta})$ is isomorphic to $X_0$ (use $(1+A)^{\beta}$ as an isomorphism), the result follows.

\subsection{Proof of Theorem \ref{thm:maximalest}}

\subsubsection{Reduction to a continuous-time setting}

It is clear that by density, it suffices to consider  $g\in L^p(\Omega;L^p(\R_+;\gamma(H,X_{1})))$ with support on a bounded subinterval of $[0,\infty)$. Let $u \in L^p(\R_+\times\Omega;X_1)$ be the linear interpolation of the discrete solution, i.e.
\begin{align}\label{eq:linearinterp}
  u(n\tau +s\tau)=(1-s)Y_n+sY_{n+1}, \ \ \ s \in [0,1], n \ge 0.
\end{align}
From Corollary \ref{cor:DSMRLq} one sees that $A Y\in L^p(\Omega;\ell^p_{\tau,w_{\alpha}}(X_0))$. Moreover, since $A$ is invertible we have $Y\in L^p(\Omega;\ell^p_{\tau,w_{\alpha}}(X_1))$ and $u\in L^p(\Omega;L^p(\R_+,w_{\alpha};X_1))$ with
\begin{align}\label{eq:estimateuLp}
\E\|u\|_{L^p(\R_+,w_{\alpha};X_1)}^p &\eqsim \E \sum_{n\geq 0} \int_{t_n}^{t_{n+1}} \|A u(t)\|^p_{X_0} t^{\alpha} dt
 \\ & \nonumber = \E \sum_{n\geq 0} \int_{0}^{1} \|A u(n\tau + s\tau)\|^p_{X_0} \tau (n\tau + s\tau)^{\alpha} ds
 \\ & \nonumber \leq \E \sum_{n\geq 0} \int_{0}^{1} [(1-s)\|A Y_n\|^p_{X_0}+s \|AY_{n+1}\|_{X_0}^p] \tau (n\tau + s\tau)^{\alpha} ds
 \\ & \nonumber\leq \E \sum_{n\geq 0} \tau t_{n+1}^{\alpha} \|A Y_n \|^p_{X_0}
 \\ & \nonumber \lesssim \E\|g\|_{L^p(\R_+,w_{\alpha};\gamma(H,X_{1/2}))}^p,
\end{align}
where we also use the convexity of $|\cdot|^p$.

Since $u$ is piecewise linear, one can even check that $u'$ exists in the weak sense and $u'\in L^p(\Omega;L^p(\R_+;X_1))$ and also $u'\in L^p(\Omega;L^p(\R_+,w_{\alpha};X_1))$ due to the extra regularity assumed on $g$. In particular, $u\in C_b(\R_+;X_1)$.

The proof of Theorem \ref{thm:maximalest} would be complete if we can prove the bound
\begin{align}\label{eq:supestimu}
\E \sup_{t\geq 0} \|u(t)\|_{X_{1-\frac {1+\alpha}{p},p}}^p + \E \sup_{t\geq 0} t^{\alpha} \|u(t)\|_{X_{1-\frac {1}{p},p}}^p\leq C^p \E\|g\|^{p}_{L^p(\R_+,w_{\alpha};\gamma(H,X_{1/2}))}.
\end{align}
Indeed, this is immediate from $u(t_n) = Y_n$. Now a crucial trick to avoid estimating the expectation of a supremum of a stochastic process is to use the following trace embedding of \cite[Theorem 1.2]{ALV21} (see also \cite[Theorem 1.1]{MV14}): for $\sigma\in (\frac12- \frac1p, \frac12)$
\begin{align*}
L^{p}(\R_+,w_{\alpha};X_{1}) \cap H^{\sigma,p}(\R_+,w_{\alpha};X_{1-\sigma})&\hookrightarrow C_b([0,\infty);X_{1-\frac {1+\alpha}{p},p}),
\\ L^{p}(\R_+,w_{\alpha};X_{1}) \cap H^{\sigma,p}(\R_+,w_{\alpha};X_{1-\sigma})&\hookrightarrow C_b((0,\infty),w_{\alpha};X_{1-\frac {1}{p},p}),
\end{align*}
where $C_b((0,\infty),w_{\alpha};E)$ are the continuous functions $f\colon (0,\infty)\to E$ for which $\|f\|_{C_b((0,\infty),w_{\alpha};E)} \coloneq  \sup_{t>0} w_{\alpha}(t) \|f(t)\|_E<\infty$.

Indeed, \eqref{eq:supestimu} follows as soon as we have proved the estimate
\begin{align}\label{eq:supestimu2}
\E \|u\|_{H^{\sigma,p}(\R_+;X_{1-\sigma})}^p \leq C^p_{\sigma} \E\|g\|^{p}_{L^p(\R_+;\gamma(H,X_{1/2}))}, \ \ \sigma\in [0,1/2).
\end{align}
The case $\sigma = 0$ follows from \eqref{eq:estimateuLp}. From now on let $\sigma\in (0,1/2)$ be arbitrary. To prove \eqref{eq:supestimu2}, due to the fact that $0\in \rho(A)$ it is enough to show that
\begin{align*}
\E \|A^{1-\sigma} u\|_{H^{\sigma,p}(\R_+,w_{\alpha};X_{0})}^p\leq C^p_{\sigma} \E\|g\|^{p}_{L^p(\R_+,w_{\alpha};\gamma(H,X_{1/2}))}, \ \ \ \sigma\in [0,1/2).
\end{align*}
It is well-known that $-\partial_t$ on $L^p(\R_+,w_{\alpha};X_0)$ with domain $W^{1,p}(\R_+,w_{\alpha};X_0)$ has a bounded $H^\infty$-calculus of angle $\pi/2$ and $D(\partial_t^{\sigma}) = H^{\sigma,p}(\R_+,w_{\alpha},X_0)$ (see \cite[Theorem 6.8]{LMV18}). In particular, it follows that
\[\|A^{1-\sigma} u\|_{H^{\sigma,p}(\R_+,w_{\alpha};X_{0})}\eqsim \|A^{1-\sigma} u\|_{L^{p}(\R_+,w_{\alpha};X_{0})} + \|(-\partial_t)^{\sigma} A^{1-\sigma} u\|_{L^{p}(\R_+,w_{\alpha};X_{0})}.\]
Since we have already estimated the $L^p$-norm of $\|A^{1-\sigma} u\|_{X_0}\lesssim \|A u\|_{X_0}$, it remains to prove
\begin{align}\label{eq:estpartialu}
\E \|(-\partial_t)^{\sigma} A^{1-\sigma} u\|_{L^p(\R_+,w_{\alpha};X_0)}^p\lesssim \E\|g\|^{p}_{L^p(\R_+,w_{\alpha};\gamma(H,X_{1/2}))}.
\end{align}
Note that $\partial_t$ generates the left translation semigroup on $L^p(\R_+;X_0)$. Since $A^{1-\sigma} u$ is in $W^{1,p}(\R_+,w_{\alpha};X_0)$, it follows from the Balakrishnan formula for the fractional power (see \cite[Theorem 3.2.2]{MaSa}) that
\begin{align}\label{eq:Balak}
(-\partial_t)^\sigma A^{1-\sigma} u(t) = C_{\sigma} \int_0^\infty A^{1 -\sigma} \frac{u(t)-u(t+h)}{h^{1+\sigma}} \, dh,
\end{align}
with $C_{\sigma} \neq 0$.

\subsubsection{Estimating the fractional derivative}
By \eqref{eq:Balak} we can write
\begin{align*}
	\E&\|A^{1-\sigma} (-\partial)^\sigma u\|^p_{L^p(\R_+,w_{\alpha};X_0)} \eqsim \E\int_0^\infty \Big \| \int_0^\infty A^{1 -\sigma} \frac{u(t)-u(t+h)}{h^{1+\sigma}} \, dh \Big\|^p_{X_0} w_{\alpha}(t) \, dt
	\\
	&= \E\sum_{n\ge0} \int_{t_n}^{t_{n+1}} \Big \| \sum_{j\ge0} \int_{t_j}^{t_{j+1}} A^{1 -\sigma} \frac{u(t)-u(t+h)}{h^{1+\sigma}} \, dh \Big\|^p_{X_0} t^{\alpha} \, dt
	\\
	& \leq \E\sum_{n\ge0} \tau \int_{0}^{1} \Big \| \sum_{j\ge0} \tau \int_{0}^{1} A^{1 -\sigma} \frac{u(n\tau +s\tau)-u((n+j)\tau+(s+r)\tau)}{(j\tau +r\tau)^{1+\sigma}} \, dr \Big\|^p_{X_0} t_{n+1}^{\alpha}\, ds.
\end{align*}
 For $s+r\in [0,1)$ we can write
\begin{align*}
	u(n\tau +&s\tau)-u((n+j)\tau+(s+r)\tau)
	\\ &= (1-s)Y_n+sY_{n+1}-[(1-(s+r))Y_{n+j}+(s+r)Y_{n+j+1}]
	\\ &=(1-s-r) (Y_n-Y_{n+j}) + (s+r)(Y_{n+1}-Y_{n+j+1}) + r(Y_{n}-Y_{n+1}),
\end{align*}
For $s+r\in [1, 2)$ we can write
\begin{align*}
	u(&n\tau +s\tau)-u((n+j)\tau+(s+r)\tau)
	\\ &= (1-s)Y_n+sY_{n+1}-[(1-(s+r-1))Y_{n+j+1}+(s+r-1)Y_{n+j+2}]
  \\ & = (2-s-r)(Y_n - Y_{n+j+1}) + (s+r-1)(Y_{n+1}-Y_{n+j+2}) +(1-r)(Y_{n+1}-Y_{n}).
\end{align*}
Therefore, from the triangle inequality, it follows that it suffices to estimate each of the following:
\begin{align*}
	T_{1} &\coloneq \sum_{n\ge0} \tau \int_0^1 \E \Big \| \sum_{j \ge 1} \int_0^{1-s} \tau A^{1 - \sigma} (1-s-r) \frac{Y_n-Y_{n+j}}{(j\tau+r\tau)^{1+\sigma}} \, dr \Big\|^p_{X_0} t_{n+1}^{\alpha} \, ds
\\
	T_{2} & \coloneq \sum_{n\ge0} \tau \int_0^1 \E \Big \| \sum_{j \ge 1} \int_0^{1-s} \tau A^{1 - \sigma} (s+r)\frac{Y_{n+1}-Y_{n+j+1}}{(j\tau+r\tau)^{1+\sigma}} \, dr \Big\|^p_{X_0} t_{n+1}^{\alpha}\, ds
\\
T_{3} & \coloneq \sum_{n\ge0} \tau \int_0^1 \E \Big \| \sum_{j \ge 0} \int_0^{1-s} \tau A^{1 - \sigma} r\frac{Y_n-Y_{n+1}}{(j\tau+r\tau)^{1+\sigma}} \, dr \Big\|^p_{X_0} t_{n+1}^{\alpha}ds
\\
T_{4} &\coloneq \sum_{n\ge0} \tau \int_0^1 \E \Big \| \sum_{j \ge 1} \int_{1-s}^1 \tau A^{1 - \sigma} (2-s-r)\frac{Y_n-Y_{n+j+1}}{(j\tau+r\tau)^{1+\sigma}} \, dr \Big\|^p_{X_0} t_{n+1}^{\alpha}\, ds
\\
T_{5} & \coloneq \sum_{n\ge0} \tau \int_0^1 \E \Big \| \sum_{j \ge 0} \int_{1-s}^1 \tau A^{1 - \sigma} (s+r-1)\frac{Y_{n+1}-Y_{n+j+2}}{(j\tau+r\tau)^{1+\sigma}} \, dr \Big\|^p_{X_0} t_{n+1}^{\alpha}\, ds
\\
 	T_{6} & \coloneq \sum_{n\ge0} \tau \int_0^1 \E \Big \| \sum_{j \ge 1} \int_{1-s}^{1} \tau A^{1 - \sigma} (1-r)\frac{Y_{n+1}-Y_{n}}{(j\tau+r\tau)^{1+\sigma}} \, dr \Big\|^p_{X_0} t_{n+1}^{\alpha}\, ds
 \\ T_7& \coloneq \sum_{n\ge0} \tau \int_0^1 \E \Big \| \int_{1-s}^{1} \tau A^{1 - \sigma} (1-s)\frac{Y_{n+1}-Y_{n}}{(r\tau)^{1+\sigma}} \, dr\Big\|^p_{X_0} t_{n+1}^{\alpha} \, ds.
\end{align*}

The terms $T_1, T_2, T_3$ come from the case $s+r\in [0,1)$, and the other terms
from $s+r\in [1,2)$. Note that the term $T_7$ is just the $j=0$ term of $T_4$ and $T_6$ combined to avoid the creation of a singularity.
The proofs of the estimates for $T_1, T_2, T_4, T_5$ are very similar. To give the bounds, we only present the details for $T_1$. It is enough to prove that for all $s\in (0,1)$:
\begin{align}\label{eq:tobeprovedT1}
	T_{1,s} &\coloneq \sum_{n\ge0} \tau \E \Big \| \sum_{j \ge 1} \tau^{-\sigma} \psi(j,s) A^{1 - \sigma} (Y_{n+j}-Y_n) \Big\|^p_{X_0}\leq C\|g\|_{L^p(\R_+;\gamma(H,X_{1/2}))},
\end{align}
where $C$ is independent of $s$ and
\begin{align*}
\psi(j,s) &= \int_0^{1-s} \frac{(1-s-r)}{(j+r)^{1+\sigma}} \,dr.
\end{align*}
We need two properties of $\psi$, which are both straightforward to check:
\begin{align}\label{eq:propphi}
\sup_{j\geq 1, s\in (0,1)} j^{1+\sigma} |\psi(j,s)|<\infty \ \ \text{and} \ \ \sup_{j\geq 1, s\in (0,1)} j^{2+\sigma} \Big|\psi(j+1,s) - \psi(j,s)\Big|<\infty.
\end{align}
The details of \eqref{eq:tobeprovedT1} will be given in Subsection \ref{sss:Texp}. The proofs are based on the $H^\infty$-calculus of $A$ and the $\mathcal{R}$-boundedness results provided in Section \ref{sec:Rbdd}.

\subsubsection{Estimating terms $T_3, T_6$ and $T_7$}
To estimate $T_7$ note that
\begin{align*}
\sum_{n\ge0} \tau \int_0^1 \E &\Big \| \int_{1-s}^{1} \tau A^{1 - \sigma} (1-s)\frac{Y_{n+1}-Y_{n}}{(r\tau)^{1+\sigma}} \, dr\Big\|^p_{X_0} t_{n+1}^{\alpha} \, ds
\lesssim \sum_{n\ge0} \tau \E \Big \| \tau^{-\sigma} A^{1 - \sigma} (Y_{n+1}-Y_{n}) \, \Big\|^p_{X_0} t_{n+1}^{\alpha}.
\end{align*}
Before we continue, we first note that for $T_3$ and $T_6$ one has
\begin{align*}
C_3&\coloneq \sum_{j\geq 0} \int_0^{1-s} \frac{r}{(j+r)^{1+\sigma}} \,dr\leq \int_0^{1} \frac{1}{r^{\sigma}} \,dr+\sum_{j\geq 1} \frac{1}{j^{1+\sigma}} \,dr<\infty,
\\ C_6&\coloneq \sum_{j\geq 1} \int_{1-s}^1 \frac{1-r}{(j+r)^{1+\sigma}} \,dr\leq \sum_{j\geq 1} \frac{1}{j^{1+\sigma}} \,dr<\infty
\end{align*}
and therefore,
\begin{align}\label{eq:toboundT3}
T_{3} \leq C_3^p \sum_{n\ge0} \tau \E \Big \| \tau^{-\sigma} A^{1 - \sigma} (Y_{n+1}-Y_{n})\Big\|^p_{X_0} t_{n+1}^{\alpha} .
\end{align}
The same estimate holds for $T_6$. Therefore, in order to bound $T_3, T_6, T_7$, it suffices to bound the right-hand side of \eqref{eq:toboundT3}. From the formula for $Y$ given in \eqref{Eq:Definition of approximation scheme}, we see that
$$Y_{n+1}-Y_n= R_\tau \Delta_n I_g+\sum_{m=0}^{n-1} (R_\tau-I)R_\tau^{n-m}\Delta_m I_g.$$
Thus it suffices to bound each of the terms
\begin{align*}
	T_{8} & \coloneq \sum_{n\ge0} \tau \E \Big \| \tau^{-\sigma} A^{1-\sigma} R_{\tau} \Delta_n I_g \Big\|^p_{X_0} t_{n+1}^{\alpha},
	\\
	T_{9} & \coloneq  \sum_{n\ge 1} \tau \E \Big \| \tau^{-\sigma}  \sum_{m=0}^{n-1} A^{1-\sigma} (R_{\tau}-I)(R_{\tau}^{n-m}) \Delta_m I_g \Big\|^p_{X_0} t_{n+1}^{\alpha}.
\end{align*}
By Proposition \ref{prop:BDGtype2} and \eqref{Ineq:Fractional powers of rational} we can estimate
\begin{align*}
	T_{8}&  \lesssim_{p,X_0} \sum_{n\ge0} \tau \E \Big\| \tau^{-\sigma} A^{1-\sigma} R_{\tau} g \Big\|^p_{L^2(t_n, t_{n+1};\gamma(H,X_0))} t_{n+1}^{\alpha}
	\\
	&\lesssim\sum_{n\ge0} \tau \E \tau^{-p/2} \|A^{1/2}g \|^p_{L^2((t_n, t_{n+1});\gamma(H,X_{0}))} t_{n+1}^{\alpha}
\\ & \lesssim 	 \sum_{n\ge0} \E \|A^{1/2}g \|^p_{L^p((t_n, t_{n+1}), w_{\alpha};\gamma(H,X_{0}))} \lesssim \E \|g\|^p_{L^p(\R_+,w_{\alpha};\gamma(H,X_{1/2}))},
\end{align*}
where in the penultimate step we used H\"older's inequality and in the last step we used Lemma \ref{lem:BIP}.
By Proposition \ref{prop:BDGtype2} and Lemma \ref{Lemma:Fractional powers of difference EE and rational} (writing $R_{\tau}-I = (R_{\tau}-e^{-\tau A}) + (e^{-\tau A}-I)$ and using \eqref{Eq: Decay rate of semigroup difference}) we get that
\begin{align*}
	T_{9}&  = \sum_{n\ge 1} \tau  \E \Big \| \tau^{-\sigma}    \sum_{m=0}^{n-1} A^{-(\sigma+\frac12)} (R_{\tau}-I) A^{1+\frac12}R_{\tau}^{n-m} \Delta_m I_g \Big\|^p_{X_0}  t_{n+1}^{\alpha}
	\\
	&\lesssim_{p,X} \sum_{n\ge 1} \tau \E \Big ( \sum_{m=0}^{n-1} \Big \| \tau^{-\sigma}  A^{-(\sigma+\frac 12)} (R_{\tau}-I) AR_{\tau}^{n-m} A^{\frac12} g\Big\|^2_{L^2(t_m, t_{m+1};\gamma(H,X_{0}))} \Big)^{p/2} t_{n+1}^{\alpha}
        \\
         & \le \sum_{n\ge 1} \tau \E \Big ( \sum_{m=0}^{n-1}  \| \tau^{-\sigma}  A^{-(\sigma+\frac12)}(R_{\tau}-I) \|^2_{\L(X_0)} \|AR_{\tau}^{n-m}\|^2_{\L(X_0)} \|A^{\frac12} g \|^2_{L^2(t_m, t_{m+1};\gamma(H,X_{0}))} \Big)^{p/2} t_{n+1}^{\alpha}
	\\
	&\lesssim \sum_{n\ge0} \tau \E \Big ( \sum_{m=0}^{n-1} \tau \frac{1}{(\tau(n-m))^{2}} \|g\|^2_{L^2(t_m, t_{m+1};\gamma(H,X_{1/2})} \Big)^{p/2} t_{n+1}^{\alpha}
\\ 	& \stackrel{\text{(i)}}{\lesssim}\sum_{n\ge0} \E \Big ( \sum_{m=0}^{n-1} \frac{1}{(n-m)^2} \|g\|^2_{L^p(t_m, t_{m+1}, w_{\alpha};\gamma(H,X_{1/2})} t_{m+1}^{-2\alpha/p}\Big)^{p/2} t_{n+1}^{\alpha}
\\ &  = \E \sum_{n\ge0}  \Big ( \sum_{m=0}^{n-1} G_m^2 K(n,m) \Big)^{p/2}  \stackrel{\text{(ii)}}{\lesssim}\E \sum_{m\ge0} G_m^p  = \E  \|g\|^p_{L^p(\R_+,w_{\alpha};\gamma(H,X_{1/2}))},
\end{align*}
where in (i) we used H\"older's inequality and in (ii)  we used Schur's lemma (see \cite[Appendix A.2]{GrafModern}) in $\ell^{p/2}$ for the non-negative kernel $K(n,m) \coloneq \frac1{(n-m)^2} \big(\frac{n+1}{m+1}\big)^{2\alpha/p}  \1_{0\le m <n}$ with sequences $u_n=v_n=(n+1)^{-\frac1{(\frac p2)(\frac p2)'}}$, noting that $0\le \alpha<\frac p2-1$,  and where we set $G_m \coloneq \|g\|_{L^p(t_m, t_{m+1}, w_{\alpha};\gamma(H,X_{1/2}))}$.

\subsubsection{Estimating the main term $T_{1,s}$}\label{sss:Texp}

In this section, we prove the estimate \eqref{eq:tobeprovedT1}. For this, we use the operator-valued $H^\infty$-calculus. This method was first used to establish stochastic maximal $L^p$-regularity in continuous-time in \cite{vanneerven2014stochasticintegrationbanachspaces}.

We write $R_\tau = r(\tau A)$ where $r(z)$ is either the exponential function or a rational function as in Assumption \ref{assum:mainDSMR}. Fix $\nu\in (\omega(A),\theta)$. In order to rewrite $T_{1,s}$, defined in \eqref{eq:tobeprovedT1}, note that
\begin{align*}
Y_{n+j}-Y_n &= \sum_{m=n}^{n+j-1} r(\tau A)^{n+j-m}\Delta_m I_g +\sum_{m=0}^{n-1} (R_{\tau}^j-I)r(\tau A)^{n-m}\Delta_m I_g
\end{align*}
and thus moving part of the power of $A$ to the process $g$, and using a functional calculus representation (see \cite{hytonen2017analysis,Analysis3}), we can write
\begin{align*}
 A^{1-\sigma}(Y_{n+j}-Y_n) &= \sum_{m=n}^{n+j-1} A^{1/2-\sigma} R_{\tau}^{n+j-m} \Delta_m I_{A^{1/2}}g +\sum_{m=0}^{n-1} A^{1/2-\sigma}(R_{\tau}^j-I)R_{\tau}^{n-m}\Delta_m I_{A^{1/2}g}
\\ & = \frac{1}{2\pi i} \int_{\partial \Sigma_{\nu}} \Big[\sum_{m=n}^{n+j-1}f_{j,m,n}^{(1)} (\lambda) R(\lambda,A) \Delta_m I_{A^{1/2}g} + \sum_{m=0}^{n-1} f_{j,m,n}^{(2)}(\lambda)R(\lambda,A) \Delta_m I_{A^{1/2}g}\Big] d\lambda,
\end{align*}
where $f_{j,m,n}^{(1)}, f_{j,m,n}^{(2)}\colon \Sigma_{\nu}\to \C$ are given by
\begin{align*}
f_{j,m,n}^{(1)}(\lambda) &\coloneq \lambda^{1/2-\sigma} r(\tau\lambda)^{n+j-m},
\\
f_{j,m,n}^{(2)}(\lambda)&\coloneq \lambda^{1/2-\sigma}(r(\tau \lambda)^j-I)r(\tau\lambda)^{n-m}.
\end{align*}
Therefore, to estimate $T_{1,s}$ it suffices to bound $T_{1,s}^{(1)}$ and $T_{1,s}^{(2)}$, which are given by
\begin{align*}
T_{1,s}^{(1)} & = \E \sum_{n\geq 0} \tau \Big\|\int_{\partial \Sigma_\nu} \Big[\sum_{j \ge 1} \tau^{-\sigma} \psi(j,s) \sum_{m=n}^{n+j-1} f_{j,m,n}^{(1)}(\lambda) R(\lambda,A) \Delta_m I_{A^{1/2}g}\Big] d\lambda\Big\|_{X_0}^p,
\\
T_{1,s}^{(2)} & = \E \sum_{n\geq 0} \tau \Big\|\int_{\partial \Sigma_\nu} \Big[\sum_{j \ge 1} \tau^{-\sigma} \psi(j,s) \sum_{m=0}^{n-1} f_{j,m,n}^{(2)}(\lambda) R(\lambda,A) \Delta_m I_{A^{1/2}g}\Big] d\lambda\Big\|_{X_0}^p.
\end{align*}
To rewrite $T_{1,s}^{(1)}$ in the form of a discrete convolution operator, note that for $h\in L^{p}_{\mathbb F}(\Omega;L^p(\R_+;\gamma(H,X_{0})))$,
\begin{align*}
\sum_{j \ge 1} \tau^{-\sigma} \psi(j,s) \sum_{m=n}^{n+j-1} f_{j,m,n}^{(1)}(\lambda) \Delta_m I_{h} &= \sum_{m\geq n} \sum_{j\geq m-n+1} \psi(j,s) \tau^{-\sigma} \lambda^{1/2-\sigma} r(\tau\lambda)^{n+j-m} \Delta_m I_{h}
\\ & = \sum_{m\geq n} k_{m-n}^{(1)}(\lambda) \Delta_m I_{h} = \wt{I}^{\tau}(k^{(1)}(\lambda)) h,
\end{align*}
where
\[k_m^{(1)}(\lambda) = \sum_{j\geq m+1} \psi(j,s) \tau^{-\sigma} \lambda^{1/2-\sigma} r(\tau\lambda)^{j-m},\]
and $\wt{I}^{\tau}(k(\lambda))$ is as in Remark \ref{Temp: Rbdd for i>n}. We did not make the $\tau$-dependence explicit in $k$, but of course it also depends on $\tau$. It follows that $T_{1,s}^{(1)}$ can be written as
\begin{align*}
T_{1,s}^{(1)} & = \Big\|\int_{\partial \Sigma_\nu} \wt{I}^{\tau}(k^{(1)}(\lambda)) R(\lambda,A) A^{1/2}g d\lambda\Big\|^p_{L^p(\Omega;\ell^p_{\tau,w_{\alpha}}(X_0))}.
\end{align*}
Let us introduce the short-hand notation
\[\calL^\tau \coloneq  \calL(L^{p}_{\mathbb F}(\Omega;L^p(\R_+,w_{\alpha};\gamma(H,X_{0}))), L^p(\Omega;\ell^p_{\tau,w_{\alpha}}(X_0))).\]
One can check that the operator-valued function $\lambda \mapsto \wt{I}^{\tau}(k^{(1)}(\lambda))$ is in $H^1(\Sigma_\theta; \calL^\tau)$, and commutes with the resolvent of $A$ (seen as an operator on $L^p(\Omega;\ell^p_{\tau,w_{\alpha}}(X_0))$). We claim that the above operator-valued function as a family in $\calL^\tau$ has $\mathcal{R}$-bounded range (with uniform estimates in $s$ and $\tau$). As soon as we have checked this, it follows from the boundedness of the $H^\infty$-calculus of $A$ and \cite[Theorem 16.3.4]{Analysis3} that
\begin{align*}
T_{1,s}^{(1)}\leq C \|A^{1/2} g\|_{L^{p}(\Omega;L^p(\R_+;\gamma(H,X_{0})))} \lesssim \|g\|_{L^{p}(\Omega;L^p(\R_+,w_{\alpha};\gamma(H,X_{1/2})))},
\end{align*}
where $C$ does not depend on $s$ and $\tau$, and where in the last step we used Lemma \ref{lem:BIP}.

To prove the claim, note that by Remark \ref{Temp: Rbdd for i>n} it suffices to show that there is a constant $C$ independent of $\lambda$ and $s$ such that $k^{(1)}(\lambda)/C$ is in $\mathcal K_{\tau}$. This follows from Lemmas \ref{lem:expvariant}, \ref{lem:rationalvariant} and \eqref{eq:propphi}.

A similar argument can be used to estimate $T_{1,s}^{(2)}$. Indeed,
\begin{align*}
\sum_{j \ge 1} \tau^{-\sigma} \psi(j,s) \sum_{m=0}^{n-1} f_{j,m,n}^{(2)}(\lambda) \Delta_m I_{h} &=
\sum_{m=0}^{n-1} \sum_{j \ge 1} \tau^{-\sigma} \psi(j,s) f_{j,m,n}^{(2)}(\lambda) \Delta_m I_{h}
\\ & = \sum_{m=0}^{n-1} k_{n-m}^{(2)}(\lambda) \Delta_m I_{h} = I^{\tau}(k^{(2)}(\lambda)) h,
\end{align*}
where
\[k_m^{(2)}(\lambda) = \sum_{j \ge 1} \tau^{-\sigma} \psi(j,s)
\lambda^{1/2-\sigma}(r(\tau \lambda)^j-I)r(\tau\lambda)^{m},\]
and $I_\tau(k(\lambda))$ is as in Section \ref{ss:mainRbdd}.
It follows that $T_{1,s}^{(2)}$ can be written as
\begin{align*}
T_{1,s}^{(2)} & = \Big\|\int_{\partial \Sigma_\nu} I^{\tau}(k^{(2)}(\lambda)) R(\lambda,A) A^{1/2}g d\lambda\Big\|^p_{L^p(\Omega;\ell^p_{\tau,w_{\alpha}}(X_0))}.
\end{align*}
Now the proof can be completed as we did for $T_{1,s}^{(1)}$, where this time the $\mathcal{R}$-boundedness of the range of $\lambda \mapsto I^{\tau}(k^{(2)}(\lambda))$ follows from Theorem \ref{Thm: J is uniformly R-bdd}, Lemmas \ref{lem:exp}, \ref{lem:rational} and \eqref{eq:propphi}.

\begin{remark}\label{rem:MaxDet}
In the deterministic setting, maximal estimates in trace spaces are well-known, and can be found in \cite[Theorem 2.3.2]{AsSo} and \cite[Lemma 14]{kemmochi2018discrete}. The deterministic setting is much simpler because one can formulate an equivalent description of the trace norm in a discrete setting. Moreover, expectations do not play any role here.

Our argument for proving maximal estimates can also be used to give an alternative proof for the deterministic analogue. Indeed, typically, discrete maximal regularity in the deterministic setting involves the quantity
\begin{align*}
\|D_{\tau}Y\|_{\ell^p_{\tau}(X_0)} + \|Y\|_{\ell^p_{\tau}(X_1)},
\end{align*}
where $(D_{\tau}Y)_n = \frac{Y_{n+1}-Y_n}{\tau}$. Extending $Y$ to a continuous-time function $u$ as in \eqref{eq:linearinterp} immediately gives $u\in W^{1,p}(\R_+;X_0)\cap L^p(\R_+;X_1)$, and thus the trace regularity $u\in C_b([0,\infty);X_{1-\frac1p,p})$ follows from the classical Lions--Peetre trace method for real interpolation (see \cite[Appendix L]{Analysis3}). 
\end{remark}

\subsection{Proof of the maximal estimate of Proposition \ref{prop:maxHilbert}} 
We will actually prove the following more general result, which reduces to Proposition \ref{prop:maxHilbert} if $s=2$.
\begin{proposition}\label{prop:maxHilbert-with-s}
Let $X_0$ be a Hilbert space. Suppose that Assumption \ref{assum:mainDSMR} holds with $\theta=\pi/2$. Suppose that $0\in \rho(A)$ or that $A$ has a bounded $H^\infty$-calculus.
Then for every $s\in (0,\infty)$ there is a constant $C$ such that for every $g\in L^{s}_{\mathbb F}(\Omega;L^2(\R_+;\gamma(H,X_{1/2})))$ and every stepsize $\tau>0$,
\begin{equation*}
	\E \sup_{n\ge1} \| Y_n \|_{X_{1/2}}^s \le C^s \E\|g\|^{s}_{L^2(\R_+;\gamma(H,X_{1/2}))},
\end{equation*}	
where $Y=(Y_n)_{n\ge0}$ is given by \eqref{Eq:Definition of approximation scheme-intro}.
\end{proposition}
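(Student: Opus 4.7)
The plan is to combine a Sz.-Nagy dilation of $R_\tau$ with the Burkholder-Davis-Gundy (and Doob) maximal inequality for martingales in a Hilbert space. The first and key step is to equip $X_{1/2}$ (which is a Hilbert space, being the complex interpolation of two Hilbert spaces) with an equivalent inner product in which $R_\tau = r(\tau A)$ becomes a contraction. In the case where $A$ has a bounded $H^\infty$-calculus of angle $<\pi/2$, McIntosh's theorem provides an equivalent inner product on $X_0$ in which the numerical range of $A$ lies in $\overline{\Sigma_{\omega(A)}} \subset \{\Re z \geq 0\}$; since $|r(z)| \leq 1$ for $\Re z \geq 0$, the functional calculus then yields $\|R_\tau\|_{\calL(X_0)} \leq 1$ in this new norm, and BIP transfers this contractivity to $X_{1/2} \simeq D(A^{1/2})$ via the commutation $A^{1/2} R_\tau = R_\tau A^{1/2}$. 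For the case $0 \in \rho(A)$ without a bounded $H^\infty$-calculus, a separate argument is needed, most naturally exploiting the exponential decay $\|e^{-tA}\|_{\calL(X_0)} \lesssim e^{-\omega t}$ for some $\omega > 0$ to construct a suitable equivalent inner product, possibly via a perturbative shift reducing to the $H^\infty$ setting.

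Once $R_\tau$ is contractive on the Hilbert space $X_{1/2}$, Sz.-Nagy's dilation theorem produces a Hilbert space $K \supseteq X_{1/2}$, an orthogonal projection $P \colon K \to X_{1/2}$, and a unitary $U \colon K \to K$ such that $R_\tau^n x = P U^n x$ for all $x \in X_{1/2}$ and $n \geq 0$. The central algebraic observation is then
\[
Y_n = \sum_{k=0}^{n-1} R_\tau^{n-k} \Delta_k I_g = P U^n N_n, \qquad N_n \coloneq \sum_{k=0}^{n-1} U^{-k} \Delta_k I_g,
\]
so that $(N_n)$ is a $K$-valued $L^2$-martingale: each $U^{-k}$ is a deterministic bounded operator, and the $\Delta_k I_g \in X_{1/2} \hookrightarrow K$ are orthogonal martingale differences whose norms are preserved under the unitary action. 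Since $U^n$ is unitary and $P$ is a contractive projection, $\|Y_n\|_{X_{1/2}} \leq \|N_n\|_K$ pointwise.

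Applying the vector-valued Burkholder-Davis-Gundy inequality in $K$, valid for all $s \in (0, \infty)$, then gives
\[
\E \sup_{n \geq 1} \|Y_n\|^s_{X_{1/2}} \leq \E \sup_{n \geq 1} \|N_n\|^s_K \lesssim_s \E \Big( \sum_{k \geq 0} \|\Delta_k I_g\|^2_{X_{1/2}} \Big)^{s/2} \lesssim_s \E \|g\|^s_{L^2(\R_+; \gamma(H, X_{1/2}))},
\]
where the final bound follows from a further application of BDG to the $X_{1/2}$-valued stochastic integral $I_g$ to identify its discrete quadratic variation with the intrinsic $L^2$-norm of $g$. The main obstacle is precisely the first step, establishing contractivity of $R_\tau$ on $X_{1/2}$ in an equivalent Hilbert inner product under either of the two hypotheses on $A$; the $H^\infty$-calculus case is dispatched by McIntosh's renorming, while the $0 \in \rho(A)$ case is more delicate and probably relies on the exponential stability of the semigroup.
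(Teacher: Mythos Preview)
Your overall strategy --- renorm $X_{1/2}$ so that $R_\tau$ becomes a Hilbert-space contraction, apply Sz.-Nagy dilation, and invoke BDG --- is exactly the paper's approach. The $H^\infty$-calculus case is handled essentially as you describe (the paper cites the Le Merdy/McIntosh-type result \cite[Theorem 11.13]{kunstmann2004maximal} for the equivalent Hilbert norm in which the semigroup, and hence $R_\tau$ via the von Neumann inequality, is contractive).

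There is, however, a genuine gap in your treatment of the case $0 \in \rho(A)$ without $H^\infty$-calculus. Your suggestion to exploit exponential decay of the semigroup does not work: bounded (even exponentially stable) $C_0$-semigroups on Hilbert spaces need not be similar to contraction semigroups, so no such equivalent inner product exists in general on $X_0$. The paper's resolution is different and rests on a structural fact you have not identified: since $X_0$ is a Hilbert space, complex and real interpolation coincide, so $X_{1/2} = (X_0, X_1)_{1/2,2}$ is a \emph{real} interpolation space. Dore's theorem (\cite[Corollary 16.3.23]{Analysis3}) then guarantees that any sectorial operator with $0 \in \rho(A)$ automatically has a bounded $H^\infty$-calculus on such a space. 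Thus both hypotheses yield bounded $H^\infty$-calculus of $A$ \emph{on $X_{1/2}$} (not necessarily on $X_0$), and the renorming argument is carried out directly there, unifying the two cases.

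A minor point on the final step: routing through the discrete quadratic variation $\sum_k \|\Delta_k I_g\|^2$ as you do requires two applications of BDG (discrete BDG on $(N_n)$, then continuous BDG on $I_g$ to control the discrete square function by $\|g\|_{L^2}$). The paper instead recognises $N_n$ as the time-$t_n$ value of the continuous stochastic integral $\int_0^t \wt{R}_\tau^{-j(s)} Q g(s)\, dW(s)$ with $j(s)$ the partition index of $s$, and applies Proposition~\ref{prop:BDGtype2} once, directly obtaining $\E(\int_0^\infty \|g\|^2)^{s/2}$. Both routes are valid; the paper's is slightly more direct.
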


\begin{proof}
Note that $X_{1/2} = (X_0, X_1)_{1/2,2}$ and $\gamma(H,X_{1/2}) = \L_2(H,X_{1/2})$ with equivalent norms (see \cite[Corollary C.4.2]{Analysis1} and \cite[Proposition 9.1.9]{hytonen2017analysis}). If $0\in \rho(A)$, then $A$ has a bounded $H^\infty$-calculus on $(X_0, X_1)_{1/2,2}$ by Dore's theorem (see \cite[Corollary 16.3.23]{Analysis3}). If $A$ has a bounded $H^\infty$-calculus on $X_0$, then this holds even without the condition $0\in \rho(A)$ by interpolation.
From \cite[Theorem 11.13]{kunstmann2004maximal} it follows that there is a Hilbert space norm $\|\cdot\|_Z$ which is equivalent to $X_{1/2}$ under which $(e^{-tA})_{t\geq 0}$ is a contraction semigroup. Let $K_1,K_2>0$ be such that $K_1\|x\|_{Z} \leq \|x\|_{X_{1/2}} \leq K_2\|x\|_{Z}$. Note that since $|r(z)|\le 1$ on $\Sigma_{\pi/2}$, there exists $\sigma>\pi/2$ small enough such that $r \in H^\infty (\Sigma_\sigma) $. Hence, from \cite[Theorem 10.2.24]{hytonen2017analysis} it follows that $R_{\tau}$ is contractive on $Z$.

Next, we extend the discrete dilation argument in \cite[Proposition 5.1]{klioba2024pathwise} to our setting. By the Sz.-Nagy dilation theorem \cite[Theorem I.4.2]{dilation} we can find a Hilbert space $\wt{Z}$, a contractive injection $Q\colon Z\to \wt{Z}$, a contractive projection $P\colon \wt{Z}\to Z$ , and a unitary operator $\wt{R}_\tau$ on $\wt{Z}$ such that \[R_\tau^j =P \wt{R}_\tau^j Q, \ \ \ j\geq 0.\]
It follows that
\begin{align*}
\|Y_n\|_{X_{1/2}} & \leq K_2 \|Y_n\|_{Z} = K_2\Big\|\sum_{j=0}^{n-1} R_\tau^{n-j} \, \Delta_j I_g \Big\|_{Z} \leq K_2\Big\|\sum_{j=0}^{n-1} \wt{R}_\tau^{-j} Q \Delta_j I_g \Big\|_{\wt{Z}},
\end{align*}
where we used that $\wt{R}_\tau$ is unitary.
Therefore, by Proposition \ref{prop:BDGtype2} we obtain
\begin{align*}
\E \sup_{n\geq 1}\|Y_n\|_{X_{1/2}}^s & \leq K_2^s \E \sup_{n\geq 1} \Big\|\sum_{j=0}^{n-1} \wt{R}_\tau^{-j} Q \Delta_j I_g \Big\|_{\wt{Z}}^s
\\ & \leq K_2^s C_{s,X_0}^s \E \Big(\sum_{j\geq 0} \int_{t_j}^{t_{j+1}} \| \wt{R}_\tau^{-j} Q g(t)\|_{\calL_2(H,\wt{Z})}^2 dt\Big)^{s/2}
\\ & \leq K_2^s C_{s,X_0}^s \E \Big(\sum_{j\geq 0} \int_{t_j}^{t_{j+1}} \| g(t)\|_{\calL_2(H,Z)}^2 dt\Big)^{s/2}
\\ & \leq K_1^{-s} K_2^s C_{s,X_0}^s \E \| g\|_{L^2(\R_+;\calL_2(H,X_{1/2}))}^s.
\end{align*}
\end{proof}

\begin{remark}
In the above, it is enough to find an equivalent norm in which $R_{\tau}$ is a contraction.
\begin{enumerate}[(1)]
\item In case $s=2$ and $R_{\tau}$ is a contraction on a Hilbert space, then using Doob's maximal inequality for second moments and the above method, one can check that
\[\E \sup_{n\ge1} \| Y_n \|_{X_{1/2}}^2 \le 4 \E\|g\|^{2}_{L^2(\R_+;\gamma(H,X_{1/2}))}.\]

\item In case $R_{\tau}$ is a contraction on a $2$-smooth Banach space $X_{1/2}$, a similar result as in Proposition \ref{prop:maxHilbert-with-s} was proved in \cite[Proposition 5.4]{van2021maximal}.
\end{enumerate}
\end{remark}

\subsection*{Data availability}
Data sharing is not applicable to this article as no new data were created or analyzed in
this study.

\subsection*{Conflict of interest statement}
On behalf of all authors, the corresponding author states that there is no conflict of interest.

\end{document}